\documentclass{article}
\usepackage{xcolor}
\definecolor{lccx}{HTML}{92268F}
\usepackage[colorlinks,citecolor=lccx]{hyperref}

\ifpdf
\hypersetup{
	pdftitle={Randomization and Performance Improvement for IOCP with TV},
	pdfauthor={},
	linkcolor=lccx
}
\fi

\usepackage[utf8]{inputenc}
\usepackage{amsfonts}
\usepackage{amsmath,amscd,amssymb,mathrsfs,setspace}
\usepackage{amsthm}
\usepackage{mathtools}
\usepackage{latexsym,epsf,epsfig}
\usepackage{thmtools}
\usepackage{thm-restate}
\usepackage{authblk}

\usepackage[a4paper,top=2cm,bottom=2cm,left=3cm,right=3cm,marginparwidth=1.75cm]{geometry}

\makeatletter
\let\cl@chapter\relax
\makeatother
\usepackage[nameinlink]{cleveref}
\usepackage{pgfplots}
\pgfplotsset{compat=newest}
\usepgflibrary{fpu}

\usepackage{enumitem}
\usepackage{makecell}
\usepackage{booktabs}
\usepackage{multirow}
\usepackage{makecell}

\usepackage[mathscr]{euscript}
\usepackage{dutchcal}
\usepackage{adjustbox}
\usepackage{algorithm}
\usepackage{algorithmicx}
\usepackage{algpseudocode}
\usepackage{verbatim}
\usepackage{caption}
\usepackage{subcaption}

\declaretheorem[name=Theorem, numberwithin=section]{theorem}

\newtheorem{lemma}[theorem]{Lemma}

\theoremstyle{definition}
\newtheorem{definition}[theorem]{Definition}

\theoremstyle{remark}
\newtheorem{remark}[theorem]{Remark}
\newtheorem{assumption}[theorem]{Assumption}

\crefname{theorem}{Theorem}{Theorems}
\Crefname{theorem}{Theorem}{Theorems}
\crefname{assumption}{Assumption}{Assumptions}
\Crefname{assumption}{Assumption}{Assumptions}
\crefname{lemma}{Lemma}{Lemmas}
\Crefname{lemma}{Lemma}{Lemmas}
\crefname{definition}{Definition}{Definitions}
\Crefname{definition}{Definition}{Definitions}
\crefname{proposition}{Proposition}{Propositions}
\Crefname{proposition}{Proposition}{Propositions}
\crefname{algorithm}{Algorithm}{Algorithms}
\Crefname{algorithm}{Algorithm}{Algorithms}
\crefname{section}{Section}{Sections}
\Crefname{section}{Section}{Sections}
\crefname{appendix}{Appendix}{Appendices}
\Crefname{appendix}{Appendix}{Appendices}

\DeclareMathOperator*{\argmax}{arg\,max}

\newcommand{\E}{\mathbb{E}}
\newcommand{\N}{\mathbb{N}}
\newcommand{\R}{\mathbb{R}}
\newcommand{\Z}{\mathbb{Z}}
\newcommand{\cA}{\mathcal{A}}
\newcommand{\cW}{\mathcal{W}}
\newcommand{\Ha}{\mathcal{H}}
\newcommand{\conv}{\operatorname{conv}}

\newcommand{\pred}{\operatorname{pred}}
\newcommand{\ared}{\operatorname{ared}}
\newcommand{\TV}{\operatorname{TV}}
\DeclareMathOperator*{\TVh}{TV^h}

\DeclareMathOperator*{\BV}{BV}
\DeclareMathOperator*{\BVW}{BV_W}

\DeclareMathOperator*{\dvg}{div}

\newcommand*\dd{\mathop{}\!\mathrm{d}}

\newcommand{\weakstarto}{\stackrel{\ast}{\rightharpoonup}}
\newcommand{\bdvg}[1]{\dvg{}_{{\hspace*{-2pt}#1\hspace*{2pt}}}}

\newcommand{\calA}{\mathcal{A}}
\newcommand{\calD}{\mathcal{D}}

\newcommand{\calW}{\mathcal{W}}
\newcommand{\calQ}{\mathcal{Q}}

\crefname{assumption}{Assumption}{Assumptions}
\Crefname{assumption}{Assumption}{Assumptions}
\crefname{lemma}{Lemma}{Lemmas}
\Crefname{lemma}{Lemma}{Lemmas}
\crefname{definition}{Definition}{Definitions}
\Crefname{definition}{Definition}{Definitions}
\crefname{proposition}{Proposition}{Propositions}
\Crefname{proposition}{Proposition}{Propositions}
\crefname{algorithm}{Algorithm}{Algorithms}
\Crefname{algorithm}{Algorithm}{Algorithms}
\crefname{remark}{Remark}{Remark}
\Crefname{remark}{Remark}{Remark}

\title{Randomization and Performance Improvement for Integer Optimal Control with Total Variation Regularization
\thanks{%
    Sandia National Laboratories is a multimission laboratory
    managed and operated by National Technology and Engineering
    Solutions of Sandia, LLC., a wholly owned subsidiary of
    Honeywell International, Inc., for the U.S.\ Department of
    Energy’s National Nuclear Security Administration under
    contract DE-NA0003525.
    This paper describes objective technical results and analysis. Any
    subjective views or opinions that might be expressed in the paper
    do not necessarily represent the views of the U.S.\ Department of
    Energy or the United States Government.}
}

\author[1]{Robert Baraldi}
\author[2]{Paul Manns}
\author[2]{Lars M\"{o}sezahl}
\author[2]{Marvin Severitt}

\affil[1]{Sandia National Laboratories, New Mexico, USA\\
	
	\textit{rjbaral@sandia.gov}}

\affil[2]{Department of Mathematics, TU Dortmund University, Dortmund, Germany\\
	\textit{paul.manns@tu-dortmund.de}}

\begin{document}
\maketitle

\begin{abstract}
Mixed-integer PDE-constrained optimization problems
are computationally challenging due to both the combinatorial
nature of integer programming as well as evaluation
of the model. Many algorithms and subsequent performance improvements
have been developed to solve these problems, but they are often 
limited by problem size.
We numerically analyze two such algorithms: \ref{alg:slip}
and \ref{alg:patch_slip}, which solve trust-region subproblems over either
the full or partial domain, respectively. We additionally propose and
prove convergence of a randomized third algorithm, \ref{alg:randomized_patch_slip}, 
which solves trust-region subproblems over randomly selected 
patches of the domain. 
We compare performance of all
three algorithms with various improvement
techniques found throughout the literature; the purpose of this work is
to document the best combinations of these improvements in conjunction with 
various solvers. We additionally establish benchmark problems in image denoising and cloaking. 
Computational results are reported on
combinations of algorithms and improvements, and 
code used in the experiments is provided as a package.

\textbf{Keywords}\enskip
  Mixed-Integer Programming;
  PDE-Constrained Optimization;
  Randomized Methods;
  Trust Regions;
  Decomposition Methods

\textbf{MSC Subject Classes}\enskip 49M27; 49M37; 65K05; 65K10; 90C11; 90C30; 93-08
\end{abstract}

\section{Introduction}\label{sec:introduction}
We are interested in solving the following class of integer optimal control 
problems \cite{baraldi2025domain,manns2023on}
\begin{gather}\label{eq:p}
\begin{aligned}
\min_{w \in L^1(\Omega)}\ 
& J(w) \coloneqq F(w) + \alpha \TV(w) \\
\text{s.t.}\quad 
& w(x) \in W \coloneqq \{w_1,\ldots,w_M\} \subset \Z
\text{ for almost every (a.e.) }
x \in \Omega,
\end{aligned}\tag{P}
\end{gather}
where $\Omega \subset \R^d$, $d \in \N$, is a bounded Lipschitz domain,
$\alpha > 0$, and $M \in \N$. The functional $F \colon L^1(\Omega) \to \R$
is lower semicontinuous with respect to convergence in $L^p(\Omega)$
for some $p \ge 1$ and bounded below, whereas
the functional $\TV \colon L^1(\Omega) \to [0,\infty]$ is the total variation seminorm \cite{ambrosio2000functions}.
In particular, we consider $F$ to be the reduced
cost functional \cite{troltzsch2010optimal}, that is,
$F = j \circ S$, where with some fidelity term $j$ and some
solution operator $S$ of a PDE that governs the dynamics underlying
\eqref{eq:p}; these are described in more detail for specific problems. 
Integer optimal control problems are a well-studied and versatile
modeling tool with many applications, see, e.g.,
\cite{gerdts2005solving,hante2017challenges,leyffer2021convergence,yu2021multidimensional}. 
The addition of a total variation term either through a
budget constraint or as a regularizing penalty in integer optimal control 
problems has recently gained substantial scientific interest 
over a large variety of settings
see, e.g.,
\cite{baraldi2025domain,buchheim2024extended,kaya2020optimal,manns2023on,manns2025discrete,marko2023integer,marko2025vector,schiemann2025discretization,severitt2023efficient,wachsmuth2024optimal,yang2025augmentation}.

Using geometrical variational analysis, \eqref{eq:p} gives rise to
trust-region algorithms with guaranteed convergence properties
that solve combinatorial subproblems
\cite{leyffer2022sequential,manns2024convergence,manns2023on}.
For $d = 1$, efficient dynamic programming-type algorithms for subproblems
\cite{marko2023integer,severitt2023efficient} exist and the convex hull
of the solution set has been characterized in \cite{buchheim2023polytope}.
The problem class is more challenging for $d \ge 2$, where---to this date and to
the best of our knowledge---no efficient subproblem solver has been found, 
NP-completeness of the subproblems is an open question
\cite{severitt2023efficient},
the discretization of the $\TV$-seminorm poses additional challenges
in the presence of the discreteness constraint \cite{schiemann2024integer},
and branch-and-bound algorithms for the subproblems
may have prohibitively long compute times already on medium-sized
instances \cite{manns2025discrete,schiemann2024integer}.
This necessitates methods that can scale the algorithms to larger problem
instances, while simultaneously integrating a correct discretization
of the total variation seminorm. Moreover, such methods
must be rigorously tested on established and pertinent
benchmark problems.

In light of discrepancy, the authors of \cite{baraldi2025domain} incorporate
domain decomposition into a trust-region algorithm to
solve many small and relatively inexpensive subproblems 
instead of a few very hard subproblems. Additionally, \cite{baraldi2025domain} 
extends the stationarity concept for \eqref{eq:p} from \cite{manns2023on}, which is 
based on so-called local variations of the level sets of (locally)
optimal points. This concept can be localized to patches (or subdomains)
that constitute an open cover of the computational domain $\Omega$
and thus have nontrivial overlap. Consequently, a point is stationary
if and only if it is stationary on all of the patches. This is in 
analogy to unconstrained optimization in Euclidean spaces, where 
stationarity (the gradient being equal to zero) is equivalent to 
stationarity per coordinate. Similar to coordinate-descent algorithms
for unconstrained optimization in Euclidean spaces, \cite{baraldi2025domain}
developed a convergence analysis for its decomposed optimization algorithm via a greedy 
patch selection for the next iterate. 
Moreover, substantial performance gains were 
achieved for relevant parameter settings and a moderate amount
of patches in \cite{baraldi2025domain}.
\paragraph{Contributions} 
Our overall goal is to compare three algorithms' computational 
performance over a variety of different settings. 
These three algorithms encompass two previously mentioned
works and a third, randomized version which we introduce
in this work: 
1) \ref{alg:slip}, the trust-region algorithm for solving \eqref{eq:p} proposed
as Algorithm 5.1 in \cite{manns2023on}; 
2) \ref{alg:patch_slip}, Algorithm 4.1 in \cite{baraldi2025domain}, which
corresponds to SLIP with trust-region subproblems defined on patches with
a greedy patch-selection mechanism; 
and 3) \ref{alg:patch_slip},  which corresponds
to \ref{alg:slip} with trust-region subproblems defined on patches with
a random patch-selection mechanism.
\ref{alg:slip} improves computational performance
of the algorithmic frameworks from \cite{baraldi2025domain,manns2023on}.
Specifically, we overcome the need of an expensive greedy approach
for convergence guarantees via this randomized algorithm. Its design is
motivated by randomized coordinate-descent methods, typical in
data science applications. We prove convergence of \ref{alg:randomized_patch_slip}
to stationary points in expectation in function space.

Those algorithms have comparable convergence properties in the sense that
the iterates produced by \ref{alg:slip} and \ref{alg:patch_slip} converge to stationary points,
see \cite{baraldi2025domain,manns2023on}. Likewise, the iterates produced by
\ref{alg:randomized_patch_slip} converge to stationary points a.s., see \cref{thm:main_convergence_theorem}.
In order to obtain a fair computational comparison below, we first present several
avenues for practical computational performance improvements for the trust-region
algorithms and the trust-region subproblem solver that operates on discretizations
of the subproblems \eqref{eq:trp}.

These accelerations of
the algorithm(s) are: an advancement of the cutting plane
generation strategy from \cite{manns2025discrete} in
the branch-and-bound-based subproblem solution; 
a novel
discretization of the $\TV$-seminorm on the subproblem
level \cite{schiemann2025discretization}; 
replacement of the alternating
solution algorithm from \cite{schiemann2025discretization} by an integrated one 
based on so-called lazy constraints;
% that successively augment the integer optimization problem
% during the solution process; 
different domain decompositions for \ref{alg:patch_slip} and
\ref{alg:randomized_patch_slip} \S\ref{sec:domain_decomposition_mechanisms}; and
a heuristic initialization method for the initial iterate $w^0$
in \S\ref{sec:heuristic_initialization}. 
% Afterwards, we present 
% the discretization of \eqref{eq:trp} and several options to improve the performance
% of the subproblem solver for the them in \cref{sec:trp_improvement}.
In addition, we provide four computational benchmark problems, 
on which we assess the effect of our algorithmic advancements.
%(in SLIP, we always have $D = \Omega$).
In our overall comparison, we run \ref{alg:slip}, 
\ref{alg:patch_slip}, \ref{alg:randomized_patch_slip}
against each other with the most successful
improvements switched on for subsequent sections.

The remainder of the paper is structured as follows. 
In \cref{sec:pre}, we introduce the concept of 
stationarity and summarize previous results 
for algorithm convergence. 
\Cref{sec:benchmark_problems} discusses
the problems upon which we test our algorithms. 
Subsequently, \cref{sec:performance_improvements_for_slip}
discusses performance improvement techniques for \ref{alg:slip}.
While noting that these extend to its subsidiaries, 
we additionally state improvements specific to \ref{alg:patch_slip}
in \cref{sec:patch_slip} and test it against 
\ref{alg:slip}. 
We develop a randomized version of \ref{alg:patch_slip} in 
\cref{sec:randomized_patch_slip}, and as before, test it against its
predecessors. 
Finally, we conclude in \cref{sec:con}
and leave some of the theory of \ref{alg:randomized_patch_slip} to
the appendix. 

\section{Notation \& Preliminaries}
\label{sec:pre}
We briefly introduce notation that is from geometric measure theory
used for convergence discussions. 
For a measurable set $E \subset \Omega$, we define its perimeter by
$P(E,\Omega) = \TV(\chi_E)$, where $\chi_E$ denotes the $\{0,1\}$-valued
indicator function of $E$. Sets with $P(E,\Omega) < \infty$ are
called \emph{Caccioppoli} sets. If $\{E_i : i \in I\}$ for some
$I \subset \N$ is a partition of $\Omega$ with
$\sum_{i \in I} P(E_i, \Omega) < \infty$, then it is a
\emph{Caccioppoli partition}.
Feasible points of \eqref{eq:p} with finite objective value are
functions in $\BV(\Omega)$, the space of functions of bounded
variation \cite{ambrosio2000functions} on $\Omega$ that
attain values in the finite set $W$ only. We abbreviate this
space as $\BVW(\Omega) \coloneqq \{ w \in \BV(\Omega) : v(x) \in  w
\text{ for a.e.\ }x \in  \Omega \}$.
The derivatives $f'(x) \in (L^p(\Omega))^*$ of
differentiable functions  $f \colon L^p(\Omega) \to \R$ with
$p \in [1,\infty)$ have representatives in $L^{p'}(\Omega)$
for the Hölder conjugate index $p'$, which we denote by $\nabla f(x)$.
First, we briefly recall the stationarity concept from \cite{manns2023on}
and its equivalent localized characterization from \cite{baraldi2025domain};
it is required for the convergence of \cref{alg:randomized_patch_slip},
introduced in \cref{sec:randomized_patch_slip} and analyzed
in \cref{sec:algorithm_analysis}. We also provide a set of standing 
assumptions and resulting existence of solutions to \eqref{eq:p},
and then state our trust-region subproblem class since it is the core building
block of the three following algorithms.
\begin{definition}[Stationarity, Definition 4.4 in \cite{manns2023on}]\label{dfn:stationarity}
	Let $F \colon L^1(\Omega) \to \R$ be continuously
	Fr\'{e}chet differentiable.
	Let $w \in \BVW(\Omega)$, that is $w = \sum_{i=1}^M w_i \chi_{E_i}$
	for some Caccioppoli partition $\{E_1,\ldots,E_M\}$  of
	$\Omega$. Let $\nabla F(w) \in C(\bar{\Omega})$.
	Then, $w$ is \emph{stationary} if
	\begin{gather}\label{eq:stationarity}
	\sum_{i=1}^{M - 1} \sum_{j=i + 1}^M
	\int_{\partial^*{E}_i \cap \partial^* E_j}
	(w_j - w_i)\nabla F(w)(x)\phi(x)\cdot n_{E_i}(x)
	- \alpha |w_i - w_j| \bdvg{E_i} \phi(x)
	\dd \Ha^{d-1}(x) = 0
	\end{gather}
	holds for all $\phi \in C^\infty_c(\Omega, \R^d)$.
\end{definition}

\begin{assumption}\label{ass:calD_open_cover}
	Let $\calD \subset 2^{\Omega}$ be a finite, open cover of $\Omega$.
\end{assumption}

\begin{definition}[Patch-stationarity,
	Definition 3.4 in \cite{baraldi2025domain}]\label{dfn:patch_stationarity}
	Let $\calD \subset 2^{\Omega}$, $F \colon L^1(\Omega) \to \R$, and $w \in \BVW(\Omega)$
	satisfy the assumptions of \cref{dfn:stationarity} and \cref{ass:calD_open_cover}.
	Then, $w$ is \emph{patch-stationary with respect to $\calD$} if for all $D \in \calD$
	the identity \eqref{eq:stationarity} holds for all $\phi \in C^\infty_c(D, \R^d)$.
	If $w$ satisfies \eqref{eq:stationarity} for all $\phi \in C^\infty_c(D, \R^d)$
	some (fixed) $D \in \calD$, we say that $w$ \emph{is patch-stationary on} $D$.
\end{definition}

As mentioned before, stationarity and patch-stationarity are equivalent in this setting,
as shown below.
\begin{theorem}[Theorem 3.5 in \cite{baraldi2025domain}]
Let $\calD \subset 2^{\Omega}$, $F \colon L^1(\Omega) \to \R$, and $w \in \BVW(\Omega)$
satisfy the assumptions of \cref{dfn:patch_stationarity}. Then $w$
is stationary if and only if it is patch-stationary with respect to $\calD$.
\end{theorem}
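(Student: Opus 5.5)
The plan is a partition of unity argument, in the spirit of showing that a distribution vanishes iff it vanishes locally. Write the left-hand side of \eqref{eq:stationarity} as a map $T_w(\phi)$ for $\phi \in C^\infty_c(\Omega,\R^d)$. This map is linear in $\phi$: the tangential divergence $\bdvg{E_i}\phi = \dvg\phi - n_{E_i}\cdot (D\phi)\, n_{E_i}$ is linear in $\phi$, and so is the term $\nabla F(w)\phi\cdot n_{E_i}$. The plan is to show that $T_w$ vanishes on all of $C^\infty_c(\Omega,\R^d)$ if and only if it vanishes on $C^\infty_c(D,\R^d)$ for each $D \in \calD$.

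The direction ``stationary $\Rightarrow$ patch-stationary'' is immediate. Every $\phi \in C^\infty_c(D,\R^d)$, extended by zero, belongs to $C^\infty_c(\Omega,\R^d)$, because $D \subset \Omega$ is open and $\operatorname{supp}\phi$ is a compact subset of $D$. Hence \eqref{eq:stationarity} holds for it.

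For the converse, fix $\phi \in C^\infty_c(\Omega,\R^d)$ and set $K = \operatorname{supp}\phi$, a compact subset of $\Omega$. By \cref{ass:calD_open_cover}, $\calD = \{D_1,\ldots,D_N\}$ is a finite open cover of $\Omega$, and hence of $K$. I would invoke the standard smooth partition of unity subordinate to a finite open cover of a compact set. It supplies $\psi_k \in C^\infty_c(D_k)$ with $0 \le \psi_k \le 1$ and $\sum_{k=1}^N \psi_k = 1$ on a neighborhood of $K$. Then $\phi = \sum_k \psi_k\phi$, and each $\psi_k\phi \in C^\infty_c(D_k,\R^d)$. Linearity of $T_w$ gives $T_w(\phi) = \sum_k T_w(\psi_k\phi) = 0$ by patch-stationarity on each $D_k$.

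The main obstacle is justifying linearity and well-definedness of $T_w$ rigorously. We need each integral over $\partial^*E_i\cap\partial^*E_j$ to be finite, so that splitting $\phi$ into finitely many summands is legitimate. This follows because $\nabla F(w) \in C(\bar\Omega)$ is bounded and $\phi$, $D\phi$ are bounded. Moreover, $\Ha^{d-1}(\partial^*E_i) = P(E_i,\Omega) < \infty$ for the Caccioppoli partition, and $n_{E_i}$ is a unit vector $\Ha^{d-1}$-a.e. on the reduced boundary. The integrands are therefore integrable, and linearity follows from linearity of the integral.

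A secondary check is that the cover only needs to cover $K$, not all of $\bar\Omega$, which is why compact support of $\phi$ inside $\Omega$ is essential. The construction of the partition of unity uses only finiteness of $\calD$ and compactness of $K$; no regularity of the patches is needed.
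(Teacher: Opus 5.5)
Your proof is correct, and it is essentially the argument behind the cited result. The inclusion $C^\infty_c(D,\R^d)\subset C^\infty_c(\Omega,\R^d)$ gives one direction. For the other, you use a smooth partition of unity subordinate to the finite open cover of $\operatorname{supp}\phi$, together with linearity and finiteness in $\phi$ of the left-hand side of \eqref{eq:stationarity}. The paper itself gives no proof here and only quotes Theorem 3.5 of \cite{baraldi2025domain}, whose localization argument over an overlapping open cover is, as far as one can tell, this same partition-of-unity argument.
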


We now provide sufficient conditions on $F$ required
for \ref{alg:randomized_patch_slip}'s convergence analysis to hold. Specifically, we 
may make the same choice as for the (deterministic) 
patch-based trust-region algorithm in 
\cite{baraldi2025domain}. We note that they are very 
similar to the ones imposed in closely 
related works; see also Assumption 2.2 in 
\cite{manns2023homotopy} and Assumption 4.3 in
\cite{manns2023on} and the discussions of the assumptions therein.

\begin{assumption}[Assumption 4.1 in \cite{baraldi2025domain}]\label{ass:standing}~
\begin{itemize}
\item Let $F \colon L^1(\Omega) \to \R$ be bounded
below.
\item Let $F \colon L^1(\Omega) \to \R$ be continuously Fr\'echet differentiable.
\item Let $\nabla F \colon L^1(\Omega) \to  L^\infty(\Omega)$ be Lipschitz continuous on the feasible set, that is,
\[ \infty > 
L_{\nabla F}
\coloneqq 
\sup \left\{ \frac{\|\nabla F(w) - \nabla F(v)\|_{L^\infty}}{\|w - v\|_{L^1}}
: \begin{array}{r}
w(x), v(x)\in \conv W \\
\text{for a.e.\ } x \in \Omega
\end{array}
\right\}.
\]
\item $\nabla F(w) \in C(\bar{\Omega})$ for all $w \in \BVW(\Omega)$.
\end{itemize}
\end{assumption}

The first item was already stated at the beginning
of \cref{sec:introduction} and is repeated for
convenience
The fourth item of \cref{ass:standing} is not present 
in Assumption 4.1 in \cite{baraldi2025domain} but 
assumed on designated points (iterates and limit 
points of the iterations) only in
\cite{baraldi2025domain} because this extra 
regularity is only required at specific points
and not globally. The assumption can therefore be 
interpreted as a constraint qualification. We have 
decided to assume it globally for convenience
and because the designated points, where it is 
required, are not known a priori and so it makes 
sense to verify it globally anyway if it needs 
be verified.

The first two items of \cref{ass:standing}
directly imply the existence of solutions to \eqref{eq:p}.

\begin{theorem}[Proposition 2.3
in \cite{leyffer2022sequential}]
Let \cref{ass:standing} hold. Then
\eqref{eq:p} admits a minimizer.
\end{theorem}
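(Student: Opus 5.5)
The plan is the direct method of the calculus of variations in $L^1(\Omega)$, using compactness in $\BV(\Omega)$.

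\textbf{Step 1: a bounded minimizing sequence.}
First we note that the infimum $J^* := \inf\{J(w) : w \text{ feasible}\}$ is finite.
\begin{itemize}
\item Any constant function $w \equiv w_1$ is feasible with $\TV(w) = 0$, so $J^* \le F(w_1) < \infty$.
\item By the first item of \cref{ass:standing}, $F$ is bounded below, and $\TV \ge 0$, so $J^* > -\infty$.
\end{itemize}
Take a minimizing sequence $(w^n)$ of feasible points with $J(w^n) \to J^*$. We may assume $J(w^n) \le J^* + 1$, so that
\[
\alpha \TV(w^n) \le J^* + 1 - \inf F .
\]
This bounds $\TV(w^n)$ uniformly. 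Since $w^n(x) \in W$ a.e., we also have $\|w^n\|_{L^\infty} \le \max_i |w_i|$, and $\Omega$ is bounded, so $\|w^n\|_{L^1}$ is uniformly bounded. Hence $(w^n)$ is bounded in $\BV(\Omega)$.

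\textbf{Step 2: compactness.}
Because $\Omega$ is a bounded Lipschitz domain, the compact embedding $\BV(\Omega) \hookrightarrow L^1(\Omega)$ applies \cite{ambrosio2000functions}. It yields a subsequence, not relabeled, with $w^n \to \bar w$ in $L^1(\Omega)$ and, after a further subsequence, pointwise a.e.

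Since $W$ is finite, hence closed, the pointwise limit satisfies $\bar w(x) \in W$ a.e. Thus $\bar w$ is feasible.

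Because all $w^n$ and $\bar w$ are uniformly bounded in $L^\infty$, dominated convergence upgrades the convergence to $w^n \to \bar w$ in $L^p(\Omega)$ for every $p \in [1,\infty)$. Moreover, $F$ is continuously Fr\'echet differentiable on $L^1(\Omega)$ by the second item of \cref{ass:standing}, hence continuous there, so we directly get $F(w^n) \to F(\bar w)$.

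\textbf{Step 3: lower semicontinuity and conclusion.}
The total variation is lower semicontinuous with respect to $L^1$ convergence \cite{ambrosio2000functions}, so
\[
\TV(\bar w) \le \liminf_{n\to\infty} \TV(w^n).
\]
Combining this with $F(w^n) \to F(\bar w)$ and using $\alpha > 0$,
\[
J(\bar w) \le \liminf_{n\to\infty} J(w^n) = J^* .
\]
Since $\bar w$ is feasible, $J(\bar w) \ge J^*$, so $\bar w$ is a minimizer of \eqref{eq:p}.

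The only delicate point is the compactness in Step 2. It rests on the uniform $\TV$ bound from Step 1 together with the Lipschitz regularity of $\partial\Omega$, which the standing setting provides. Everything else is routine.
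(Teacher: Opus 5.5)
Your proof is correct and is the standard direct-method argument: a bounded minimizing sequence in $\BV(\Omega)$, $L^1$-compactness on the bounded Lipschitz domain, closedness of the $W$-valued constraint under a.e.\ convergence, lower semicontinuity of $\TV$, and continuity of $F$. This is essentially what the cited Proposition 2.3 of \cite{leyffer2022sequential} rests on; the paper gives no argument beyond that citation and the remark that the first two items of \cref{ass:standing} suffice, and those are exactly the items you use.
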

Finally, we introduce our trust-region subproblem class that is serves as the core building block for all of the algorithms
in this article. For given linearization point $\bar{w} \in \BVW(\Omega)$, gradient (approximation)  $g$, patch (subdomain) $D \subset \Omega$,
and trust-region radius $\Delta$, it is defined as:
\begin{gather}\label{eq:trp}
\operatorname{\ref{eq:trp}}(\bar{w}, g, D, \Delta) \coloneqq
\left\{
\begin{aligned}
\min_{w \in L^1(\Omega)}\ & (g, w - \bar{w})_{L^2(\Omega)} + \alpha \TV(w)-\alpha \TV(\bar{w})\\
\text{s.t.}\quad & \|w - \bar{w}\|_{L^1(\Omega)} \le \Delta
\text{ and }w(x) \in W \text{ for a.e.\ } x \in D,\\
& w(x) = \bar{w}(x) \text{ for a.e.\ } x \in \Omega\setminus D,
\end{aligned}
\right.
\tag{TRP}
\end{gather}
where we highlight that the integrality is enforced on the subproblem level and
the trust region is imposed in $L^1$, which together with the discretization of the $\TV$-term
leads to a polyhedral constraint structure after discretization.
We briefly note that $\operatorname{\ref{eq:trp}}(w, g, D, \Delta)$
admits a minimizer.
\begin{theorem}[Proposition 3.2 in \cite{leyffer2022sequential}] Let 
	\cref{ass:calD_open_cover,ass:standing}
	hold. Let $D \in \calD$, $w \in \BVW(\Omega)$,
	$g \in  L^2(\Omega)$, and $\Delta > 0$. Then
	$\operatorname{\ref{eq:trp}}(w, g, D, \Delta)$
	admits a minimizer.
\end{theorem}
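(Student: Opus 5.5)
The plan is the direct method of the calculus of variations, applied to the feasible set of $\operatorname{\ref{eq:trp}}(w, g, D, \Delta)$.

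\emph{Feasibility and bounds.} The feasible set is nonempty, since $w$ itself is feasible: $w \in \BVW(\Omega)$, the trust-region constraint holds with value $0$, and $w$ takes values in $W$ on $D$. Every feasible point $v$ satisfies $v(x) \in W$ for a.e.\ $x \in D$ and $v = w$ on $\Omega \setminus D$, so $v(x) \in W$ a.e.\ in $\Omega$. Hence $\|v\|_{L^\infty} \le \max_i |w_i|$.

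Consequently the linear term $(g, v - w)_{L^2(\Omega)}$ is bounded below uniformly, by $-\|g\|_{L^1}\cdot 2\max_i|w_i|$, since $\Omega$ is bounded and $g \in L^2 \subset L^1$. Because $\TV \ge 0$, the objective is bounded below on the feasible set. Its infimum $m$ is finite and satisfies $m \le 0$, the value attained at $v = w$.

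\emph{Compactness.} Take a minimizing sequence $(v^n)$. Without loss of generality the objective values are at most $1$. The uniform lower bound on the linear term then gives $\alpha\TV(v^n) \le 1 + \alpha\TV(w) + C$, and together with the $L^\infty$ bound this shows $(v^n)$ is bounded in $\BV(\Omega)$. Since $\Omega$ is a bounded Lipschitz domain, the compact embedding $\BV(\Omega) \hookrightarrow L^1(\Omega)$ yields a subsequence with $v^n \to \bar v$ in $L^1$ and, after a further subsequence, a.e.

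\emph{Closedness of the constraints.} Pointwise a.e.\ convergence of $W$-valued functions, with $W$ finite and hence closed, gives $\bar v(x) \in W$ a.e.\ in $D$ and $\bar v = w$ a.e.\ on $\Omega \setminus D$. The $L^1$-trust-region constraint passes to the limit because the norm is continuous under $L^1$ convergence.

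\emph{Lower semicontinuity.} Since the $v^n$ are uniformly bounded in $L^\infty$ and converge in $L^1$, they also converge in $L^2$ by interpolation. Hence the linear term converges. The total variation is lower semicontinuous with respect to $L^1$ convergence. Therefore the objective at $\bar v$ is at most $\liminf$ of the objective values, which equals $m$, so $\bar v$ is a minimizer.

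The main point needing care is the compactness step: coercivity in $\TV$ must follow from a bounded objective despite the indefinite linear term. This is handled by the pointwise bound $v \in W$ a.e., which holds because the constraints fix $v$ outside $D$ and force integrality on $D$.
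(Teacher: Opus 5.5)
Your proof is correct and is the standard direct-method argument: $w$ is feasible, the pointwise constraint $v(x) \in W$ a.e.\ gives the lower bound and $\TV$-coercivity, and you conclude with compactness of $\BV(\Omega)\hookrightarrow L^1(\Omega)$, closedness of the constraints, and lower semicontinuity of $\TV$. The paper gives no proof of its own and only cites Proposition 3.2 of \cite{leyffer2022sequential}, which to my knowledge rests on the same argument, and you handle the only patch-specific point correctly: feasible points coincide with $w$ on $\Omega\setminus D$ and are therefore $W$-valued on all of $\Omega$.
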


\section{Benchmark Problems}\label{sec:benchmark_problems}
For the following benchmark problems, we optimize \eqref{eq:p}, where $F(w)$ has the tracking-type form
\begin{equation}
\label{eq:loss}
F(w) = j\circ S(w) \enskip\text{ with }\enskip j(u) = \frac{1}{2}\|u - u_d\|_{L^2(D,\mathbb{K})}^2, 
\end{equation}
$u = S(w)$ is the solution to a corresponding PDE, and $u_d$ is prescribed data. We select benchmark problems for four different PDEs in the
following subsections: \texttt{AD} \cite{baraldi2025domain} features an advection-diffusion PDE with the choices $D = \Omega$, 
$\mathbb{K} = \R$ in $F$; \texttt{Choupi} \cite{schiemann2024integer} is a denoising, deblurring, segmentation problem with the choices
$D = \Omega$, $\mathbb{K} = \R$; \texttt{Exact} \cite{schiemann2024integer} features a linear elliptic PDE with the choices $D = \Omega$,
$\mathbb{K} = \R$ in $F$ with data so that \eqref{eq:p} admits a unique global minimizer; and \texttt{Helmholtz} 
\cite{haslinger2015topology,leyffer2021convergence} features a Helmholtz equation with Robin boundary conditions with the choices
$\mathbb{K} = \mathbb{C}$ and $D = [0.25,1.75] \times [1.6125,2]$. For all of our benchmark problems, the PDE is solved on a
$128\times 128$ grid of squares, which are split into four triangles each, and on which continuous Lagrange ansatz functions of order one
are used for the state variable. The control variable is discretized using a piecewise constant ansatz on the $128\times 128$ square grid,
restricted to the subdomain that the control is acting on for the respective problem.
In order to use the convergent discretization for the $\TV$-term from \cite{schiemann2025discretization}, see our summary
in \cref{sec:w_tvh_discretization} further down below, the control grid is embedded into a coarser grid so that $4\times 4$ square grid cells
constitute a grid cell of the coarser grid.

All computations are performed on identical nodes of TU Dortmund's Linux HPC cluster LiDO3 that have two AMD EPYC 7542 32-Core CPUs and
64 GB RAM (computations were restricted to one CPU). We use DOLFINx 0.9.0 \cite{Dolfinx} for the finite element discretization
of the PDEs and the total variation seminorm and Gurobi 11.0.0 \cite{Gurobi} to solve the generated instances of \eqref{eq:trp}%
\footnote{Code for implementation of PDEs and numerical studies
can be found in \href{https://github.com/paulmanns/ioc-tv-2d-benchmarks}{https://github.com/paulmanns/ioc-tv-2d-benchmarks}.}.

\subsection{\texttt{AD} featuring an advection-diffusion PDE}
This example was used in the numerics of \cite{baraldi2025domain}, which contains the complete description.
We choose $W = \{0,1\}$ control and advection-diffusion equation on the square domain $\Omega = (0,1)^2$.
For a given control $w$ with $w(x) \in  W$ a.e., the solution of the state equation $u\coloneqq S(w)$ is 
determined by the following system of equations
\begin{gather}\label{eq:ad2d}
\begin{aligned}
-\varepsilon \Delta u + c_1 \cdot \nabla u + c_2 u w &= f \quad\text{in } \Omega, \\
u&= 0 \quad\text{on } \{0,1\} \times (0,1) \cup ((0,0.25) \cup (0.75,1)) \times \{0\}, \\
u&= \sin(2 \pi (x_1 - 0.25)) \quad\text{on } (0.25,0.75) \times \{0\}, \\
\partial_n u &= 0 \quad\text{on } (0,1) \times \{1\},
\end{aligned}
\end{gather}
where $c_2 = 2$, $c_1(x) = (\begin{matrix} \sin(\pi x_1)
& \cos(2 \pi x_2)\end{matrix})^T$ for $x \in \Omega$,
$f(x) = \sin(2 \pi x_1 + 2 \pi x_2) + 3$ for $x \in \Omega$,
and $\varepsilon = 4\cdot10^{-2}$.
The state is discretized with first-order Lagrangian finite elements.  
The data $u_d$ is computed by solving a variant of \eqref{eq:ad2d}, where $c_1$
is replaced by $\tilde{c_1}(x) = (\begin{matrix} -x_2
& 2 x_1\end{matrix})^T$, for $w = 2.5 \chi_{A} - 4(x_1 - 0.35)^3 \chi_A - 6(x_2 - 0.35)^3\chi_B$
with $A = (0,0.35)^2$ and $B = \Omega\setminus (0,0.35)^2$.
Regarding the parameters, we scale the $\TV$-term by $\alpha=10^{-3}$.

\subsection{\texttt{Choupi} featuring a linear elliptic PDE}
We choose $W = \{0, \hdots, 5\}$ control on the square domain 
$\Omega = (0,1)^2$. For a given control $w(x) \in W$ a.e., the solution to the
PDE $u \coloneqq S(w)$ is determined by the following linear PDE
\begin{gather}\label{eq:choupi2d}
\begin{aligned}
-\varepsilon \Delta u + u &= w\quad\text{in } \Omega \\
\partial_n u &= 0 \quad\text{on } \partial \Omega,
\end{aligned}
\end{gather}
where the state is again discretized with first-order Lagrangian finite elements. 
We choose $\varepsilon = 10^{-3}$ and the TV regularization term $\alpha=10^{-3}$ from \eqref{eq:p}.
We refer to \S6.4 in \cite{schiemann2024integer} for further details on this problem. The reference state $u_d$
is computed by solving \eqref{eq:choupi2d} for a noisy variant of the open-source grayscale image from \S6.4
in \cite{schiemann2024integer}\footnote{The image can be downloaded from https://github.com/annikaschiemann/choupi.},
which was scaled to the range $[0,1] = \conv W$. The noise is computed by adding Gaussian random noise with zero mean
and $0.05$ standard deviation each pixel and then clamping the resulting pixel value to $[0,1]$.
Regarding the parameters, we choose $\varepsilon = 10^{-3}$ in \eqref{eq:choupi2d} and scale the $\TV$-term by $\alpha=10^{-3}$.

\subsection{\texttt{Exact} featuring a linear elliptic PDE}
\label{subsec:exact}
We choose $W = \{0, 1\}$ control on the square domain 
$\Omega = (0,2)^2$. For a given control $w(x) \in W$ a.e., the solution to the PDE $u \coloneqq S(w)$ 
is determined by the following linear (affine) PDE
\begin{gather}\label{eq:exact2d}
\begin{aligned}
-\varepsilon \Delta u + u &= w + f \quad\text{in } \Omega \\
u &= 0 \quad\text{on } \partial \Omega.
\end{aligned}
\end{gather}
Here, $u_d$ and $f$ are constructed such that the control $\bar{w}$ given by 
\begin{align*}
	\bar w(x) = \begin{cases} 1\ : & x \in B, \\ 0 \ : & x \not\in B, \end{cases} 
\end{align*}
where $B$ is the ball with radius $0.5$ centered at $(1,1)^T$, is the unique solution to
the corresponding instance of \eqref{eq:p}. For this construction, the optimality conditions
to the relaxation, where the binarity constraint is dropped, are used. We refer to 
creating \S4.4 in \cite{schiemann2024integer} for further information.
Regarding the parameters, we choose $\varepsilon = 10^{-2}$ in \eqref{eq:choupi2d} and scale the $\TV$-term by $\alpha=10^{-3}$.

\subsection{\texttt{Helmholtz} featuring a Helmholtz equation with Robin boundary conditions}
We choose $W = \{0, 1, 2\}$ control on the square subset $(0.5,1.5)^2$ of the square domain $\Omega = (0,2)^2$.
For a given control $w(x) \in W$ a.e., The solution for the PDE $u = S(w)$ is determined by following Helmholtz equation
with Robin boundary condition
\begin{gather}\label{eq:helm}
\begin{aligned}
-\Delta u  - c_0^2(1 + q\cdot w)u  & = c_0^2 q\cdot w \cdot u_0 &\quad\text{in }\Omega,\\
\partial u/\partial n - ic_0 u &= 0 &\quad\text{on }\partial\Omega;
\end{aligned}
\end{gather}
see \cite{haslinger2015topology,leyffer2021convergence} for details on this PDE and the resulting Helmholtz cloaking
problem. Regarding the parameters, we choose $c_0 = 6 \pi$, $q = 0.75$, $u_{0} = \cos(c_0y) + i\sin(c_0y)$, and
scale the $\TV$-term by $\alpha=10^{-3}$. In the objective, we choose $u_d = -u_0$.

\section{Performance Improvements for \ref{alg:slip}}\label{sec:performance_improvements_for_slip}
We now detail performance improvements for the fundamental \ref{alg:slip}
algorithm, which is the trust-region algorithm for solving \eqref{eq:p}
without patches or randomness \cite{leyffer2021convergence,schiemann2024integer}. 
We start by describing the algorithm itself, 
which relies on efficient subproblem solves of \eqref{eq:trp}. 
As such, we subsequently describe performance improvements in
solving \eqref{eq:trp}, starting from discretization details
and moving on to coarse/fine grids, cutting plane solvers, 
branching priorities,
lazy constraints, heuristic initializations, 
and early termination. We emphasize that
while these speedups are listed in this section, they subsequently
apply to other variations of \ref{alg:slip} as each requires solving
some form of \eqref{eq:trp}. 

\subsection{SLIP Algorithm Description}\label{sec:slip}
\ref{alg:slip} is a standard trust-region algorithm applied to 
mixed-integer PDE-constrained optimization problems of the form 
\eqref{eq:p}. The most simple variant was proposed and analyzed in 
\cite{leyffer2021convergence,manns2023on}.
In particular, one solves \eqref{eq:trp} for a trust-region radius $\Delta^{\max}2^{-k}$, 
which decreases in the inner $k$-loop. If this solution produces decrease of the linear model $\pred^{n,k}$ and
additionally decreases the full problem $\ared^{n,k}$ by a fraction of $\pred^{n,k}$, then the step is accepted.
Otherwise, one checks if the predicted reduction is negative or zero, 
terminating if true as a stationary point is found. If neither condition holds, then the radius shrinks and
a new solution of \eqref{eq:trp} is found. When a step is accepted,
the trust-region radius is updated as follows. If the current trust-region radius satisfies $\Delta^{\max} 2^{-k} < \Delta^r
 = \Delta^{\max} 2^{-k_r}$, that is,
it is below the reset trust-region radius, the radius is reset to $\Delta^r = \Delta^{\max} 2^{-k_r}$. If it is greater than or equal
to the reset trust-region radius, the trust-region radius
is doubled when the ratio of actual and predicted reduction
is above the ratio threshold $\sigma_u$ and does not exceed
the maximum trust-region radius $\Delta^{\max}$. Otherwise,
the trust-region radius is kept. We highlight that due to this
combination of a typical trust-region update with the reset
at lower trust-region radii, \ref{alg:slip}
differs slightly from \cite{leyffer2022sequential,manns2023on},
where the radius is always reset, making our algorithm faster
in practice. Nevertheless, the convergence theory
of \cite{leyffer2022sequential} for $d = 1$
and \cite{manns2023on} for $d \ge 2$, still applies because the
iterations at larger trust-region radii are not important for the
convergence analysis.
\begin{algorithm}[H]
\def\thealgorithm{SLIP}
\refstepcounter{algorithm}
\caption*{\textbf{Algorithm SLIP: Sequential linear integer programming method}}\label{alg:slip}
\textbf{Input:} $F$ satisfying \cref{ass:standing}, $\Delta^{\max} > 0$,
$k_0$, $k_r \in \N$, $\Delta^r = \Delta^{\max} 2^{-k_r}$,
$w^0 \in \BVW(\Omega)$, $0 < \sigma < \sigma_u < 1$.

\begin{algorithmic}[1]
	\State $k_s \gets k_0$
	\For{$n = 0,1,2,\ldots$}
	\For{$k = k_s,k_s + 1,\ldots$}	
	\State $\tilde{w}^{n,k} \gets$ minimizer of 
	$\text{{\ref{eq:trp}}}(w^{n}, \nabla F(w^{n}), \Omega, \Delta^{\max} 2^{-k})$.\label{ln:slip_trp}
	\State $\pred^{n,k} \gets (\nabla F(w^n), w^n - \tilde{w}^{n,k})_{L^2}
+ \alpha \TV(w^{n}) - \alpha \TV(\tilde{w}^{n,k})$\label{ln:slip_pred} 
	\State $\ared^{n,k} \gets F(w^{n}) + \alpha \TV(w^{n})
- F(\tilde{w}^{n,k}) - \alpha\TV(\tilde{w}^{n,k})$
	\If{$\ared^{n,k} \ge \sigma \pred^{n,k}$ and $\pred^{n,k} > 0$}
	\State $w^n \gets \tilde{w}^{n,k}$
	\If{$k > k_r$}
	\State $k_s \gets k_r$
	\ElsIf{$\ared^{n,k} \ge \sigma_u \pred^{n,k}$}
	\State $k_s \gets \min\{0, k - 1\}$
	\EndIf
	\State \textbf{break inner loop}
	\ElsIf{$\pred^{n,k} \le 0$}
	\State \textbf{return} $w^n$ is stationary.
	\EndIf
	\EndFor
	\EndFor
\end{algorithmic}
\end{algorithm}

\subsection{Discretization Details}\label{sec:discretization}

We assume that our computational domain
$\Omega$ and the patches $D \in \calD$ are two-dimensional, 
axis-aligned rectangles, as is the case for our computational
examples below. We note
that many of the results and considerations presented here
may be carried over to higher dimensions and more complicated
domains; we leave such substantial technical
overhead to future work. First, we summarize our employed discretization in
\cref{sec:w_tvh_discretization,sec:trp_discretization}.

\subsubsection{Discretization of \texorpdfstring{$\TV$}{TV}}\label{sec:w_tvh_discretization}
We use the novel, convergent discretization for
integer optimization  problems with $\TV$-regularization from
\cite{schiemann2025discretization}. 
We assume
that $\Omega$ and the patches are discretized using two meshes
with the following characteristics.
\begin{assumption}[Modification of {\cite[Assumption 2.3]{schiemann2025discretization}}]
We make the following assumptions on the discretization:
\begin{enumerate}
\item The $\TV$-mesh:\quad For $h>0$, $\Omega$ is partitioned
into axis-aligned squares $Q \in \calQ^h$ of height $h$,
$\Omega = \bigcup_{Q \in \calQ^h} Q$. For all $D \in \calD$,
there is a subset $\calQ_D^h \subset \calQ^h$ such that
$D = \bigcup_{Q \in \calQ_D^h}$.
\item The $w$-mesh:\quad For $\tau_h \in (0,h]$, $\Omega$ is 
partitioned into axis-aligned squares $Q \in \calQ^{\tau_h}$ of
height $\tau_h$ that are embedded into the $\TV$-mesh.
Specifically, for each $\tilde{Q} \in \calQ^h$ there exists $\calQ^{\tau_h}_{\tilde{Q}} \subset \calQ^{\tau_h}$ such that
$\bigcup_{Q \in \calQ^{\tau_h}_{\tilde{Q}}}Q = \tilde{Q}$. 
\end{enumerate}
\end{assumption}

Now, the $\TV$-term is discretized using the $\TV$-mesh as
\begin{equation}
\label{eq:tvh}
\TV^h(w)
   \coloneqq \sup\Bigl\{
   \int_{\Omega} w\dvg \phi \dd x : \phi \in RT0_0^h
   \text{ and }
   \|\phi\|_{L^\infty(\Omega,\R^2)} \le 1
   \Bigr\} 
\end{equation}
for $w \in L^2(\Omega)$, where $RT0_0^h \subset H(\dvg,\Omega)$ is
the space of Raviart--Thomas finite-element functions of lowest
order with vanishing normal trace that are defined on the
square (in particular quadrilateral) cells $Q \in \calQ^h$.
The control function $w$ is discretized with the piecewise constant
ansatz on $\calQ^{\tau_h}_D$ on the current patch $D$,
$w|_D \in DG0^{\tau_h}_D$, that is
$w|_D = \sum_{Q \in \cal{Q}^{\tau_h}_D} w_Q \chi_{Q}$
with $w_Q \in [w_0, w_M]$. 
To allow for boundedness
of the sequence of iterates, $\TV^h$ is modified to
$\max\left\{\tfrac{1}{\sqrt{2}}\TV,\TV^h\right\}$ on $DG0^{\tau_h}_D$,
yielding a convergent discretization of $\TV$; see
\cite[Theorems 3.2-3.4]{schiemann2025discretization}.

\subsubsection{Discretization of \texorpdfstring{\eqref{eq:trp}}{TRP}}\label{sec:trp_discretization}
In total, we obtain the
following,  discretized problem: 
\begin{gather*}
\begin{aligned}
\operatorname{TRP}^h(&\bar{w}, g, D, \Delta) \coloneqq\\
&\left\{
\begin{aligned}
\min_{w|_D \in DG0^{\tau_h}_D}\ & (g, w - \bar{w})_{L^2} + \alpha \max\bigl\{\tfrac{1}{\sqrt{2}}\TV(w),\TV^h(w)\bigr\}-\alpha \max\bigl\{\tfrac{1}{\sqrt{2}}\TV(\bar{w}),\TV^h(\bar{w})\bigr\}\\
\text{s.t.}\quad & \|w - \bar{w}\|_{L^1} \le \Delta
\text{ and }w(x) \in W \text{ for a.e.\ } x \in D,\\
& w(x) = \bar{w}(x) \text{ for a.e.\ } x \in \Omega\setminus D.
\end{aligned}
\right.
\end{aligned}
\end{gather*}

One can show that, after a reformulation using
$\R \cup \{\infty\}$-valued functionals
and coupling $\tau_h\in o(h)$ as $h \searrow 0$,
we obtain that the problems ($\operatorname{TRP}^h$) converge
to $\eqref{eq:trp}$ in a $\Gamma$-convergence
sense, implying that limits of solutions to $\operatorname{TRP}^h$
are solutions to $\operatorname{\ref{eq:trp}}$.
Note that this does not directly follow from the
results in \cite[Section 2.2 \& 2.3]{schiemann2025discretization} due to
the absence of the trust-region constraint therein, which
necessitates an additional approximation step in \Cref{alg:oa}.
We prove this $\Gamma$-convergence result in the appendix \cref{sec:trph_trp}.
Following \S4 in \cite{schiemann2025discretization},
one can solve ($\operatorname{TRP}^h$) in a finite number of steps
using the outer approximation \ref{alg:oa}.
\Cref{alg:oa} alternatingly solves
an instance of \eqref{eq:trph_mip} corresponding to
$\operatorname{TRP}^h$, which only features a finite subset of the
infinitely many inequality constraints defining $\TV^h$, and 
an instance of \eqref{eq:tvh_qp} to compute the most-violated 
inequality of $\TV^h$ to then augment and improve \eqref{eq:trph_mip}.
Note that the domain restriction given by $w(x) = \bar w(x)$ for a.a. $x\in \Omega \backslash D$
reduces to the full domain in the abscense of patches $D$, which is utilized in 
subsequent algorithms \ref{alg:patch_slip} and \ref{alg:randomized_patch_slip}. 
\begin{algorithm}[h!]
\caption{Outer approximation algorithm for $\operatorname{TRP}^h$;
see Algorithm 4.1 in \cite{schiemann2025discretization}.}\label{alg:oa}
\textbf{Input:} $\alpha > 0$, $\tau_h > 0$, $h >0$, $\Delta > 0$, 
$\bar{w}\in \BVW(\Omega)$.
\begin{algorithmic}[1]
	\State $k \gets 0$, $\Phi \gets \emptyset$.
	\State $(w^k,T^k) \gets$ optimal solution to
	\begin{gather}\label{eq:trph_mip}
	\begin{aligned}
	\min_{(w,T) \in P0^{\tau_h}_D \times \R} \enskip & (g,w)_{L^2(D)} + \alpha T \\
	\mathrm{s.t.} \enskip\, & \TV(w) \leq cT \\
	& \max_{\phi \in \Phi}\int_\Omega w \dvg \phi\,\dd x \leq T\\
	& \|w - \bar{w}\|_{L^1(D)} \le \Delta \\
	& w(x) \in \{w_1,\ldots,w_M\}
	&&\text{for all } x \in D \\
	&  w(x) = \bar{w}(x)
	&&\text{for a.a.\ } x \in \Omega\setminus D.	
	\end{aligned}\tag{$\operatorname{TRP}^h$-$\Phi$}
	\end{gather}
	\State $\phi^{k+1} \gets$ optimal solution to
	\begin{gather}\label{eq:tvh_qp}
	\begin{aligned}
	\max_{\phi}\enskip & \int_{\Omega} \dvg \phi (x) \, w^k(x) \dd x \quad
	\mathrm{s.t.} \quad \phi \in RT0^h_0, \enskip \| \phi \|_{L^\infty(\Omega,\R^2)} \leq 1.
	\end{aligned} \tag{$\TV^h$-$\phi$}
	\end{gather}
	\State $\Phi \gets \Phi \cup \{\phi^{k+1}\}$.
	\If{$\int_\Omega \dvg \phi^{k+1}(x) \, w^k(x) \dd x - T^k \leq 0$}
	\State \Return $w^k$ as optimal solution.
	\EndIf
	\State $k \gets k+1$ and go to Step 2.
\end{algorithmic}
\end{algorithm}

The uniform discretization into square cells directly provides a corresponding underlying graph structure in the form of an 
$N_x \times N_y$ grid graph; see \cite{manns2025discrete}.  The nodes of the graph correspond to the entries $d_{i,j}$ while the
edges model the absolute value terms in the objective. The edges can be divided into row and column edges. 
Let $W_{\min} = \min\{W\}$ and likewise for the maximum.  The discretized problem \eqref{eq:trph_mip} can be written
in the manner of \cite{manns2025discrete}
\begin{gather}
\begin{aligned}
\min_{d,\delta,\beta,\gamma}\quad   &\sum_{i=1}^{N_x} \sum_{j=1}^{N_y} c_{i,j} d_{i,j} + \alpha  \left (\sum_{i=1}^{N_x-1} \sum_{j=1}^{N_y}  \beta_{i,j} +  \sum_{i=1}^{N_x} \sum_{j=1}^{N_y-1} \gamma_{i,j} \right )\\
\text{s.t.}\quad
&W_{\min} \leq \bar w_{i,j} + d_{i,j} \leq W_{\max} 
&&\text{ for all } i \in [N_x], j \in [N_y],\\
&-\beta_{i,j} \leq \bar w_{i+1,j} + d_{i+1,j} - \bar w_{i,j} - d_{i,j} \leq \beta_{i,j} 
&&\text{ for all } i \in [N_x-1], j \in [N_y],\\
&-\gamma_{i,j} \leq \bar w_{i,j+1} + d_{i,j+1} - \bar w_{i,j} - d_{i,j} \leq \gamma_{i,j}
&&\text{ for all } i \in [N_x], j \in [N_y-1],\\
&-\delta_{i,j} \leq d_{i,j} \leq \delta_{i,j}
&&\text{ for all } i \in [N_x], j \in [N_y], \\
&\sum_{i=1}^{N_x} \sum_{j=1}^{N_y} \delta_{i,j} \leq \Delta,\quad
d \in \mathbb{Z}^{N_x \times N_y},
\end{aligned}
\tag{IP}
\label{eq:ip}
\end{gather}
where we introduce auxiliary variables 
$\beta$,  $\gamma$ and $\delta$ as a linear integer program. 
The variable $\beta$ models the jumps along the rows of the 
grid graph while $\gamma$ models jumps along the columns of the graph.  
The summation over delta, $\sum_{i=1}^{N_x} \sum_{j=1}^{N_y} \delta_{i,j} \leq \Delta$,
is effectively a 
capacity constraint bounded by the trust-region radius. 
We note that the general formulation of \eqref{eq:trph_mip} with patches
has the additional constraint
$w(x) = \bar w(x) \text{ for } x \in \Omega \setminus D$, 
which is not given in \eqref{eq:ip}. 
Inclusion of patches warrants 
additional constraints of the form $d_{i,j} = 0$ for the corresponding out-of-domain 
entries. 
These constraints however do not affect the following arguments and are thus dropped 
here for sake of clarity and comparison to \cite{manns2025discrete}. 
These constraints also be enforced via setting the coefficients for the variables $\delta_{i,j}$ sufficiently large instead of $0$.
\cite[Theorem 3]{manns2025discrete} it was shown that every vertex solution for the 
linear programming relaxation to \eqref{eq:ip} has the special property 
that fractional values only occur in entries of $d$ such that the corresponding nodes 
in combination with the connecting edges form a connected subgraphs.  
This connected subgraphs was referred to as the fractional component. 
Furthermore, the values of $\bar w + d$ are the same for all entries in the fractional component.

\subsection{Performance Improvements of \texorpdfstring{\eqref{eq:trph_mip}}{TRP-h}}
The solution of \eqref{eq:ip} (and hence \eqref{eq:trph_mip}) is strongly NP-hard under 
the assumption that the bisection problem is also NP-hard on grid graphs with a finite
number of holes \cite{manns2025discrete}. The latter has been conjectured to be NP-hard
in \cite{papadimitriou1996bisection}. This motivates us to solve \eqref{eq:trph_mip}
using a branch-and-bound-based solver, which we accelerate using the integer
programming techniques that are summarized below.

\subsubsection{Cutting Planes for \eqref{eq:ip}}
In \cite{manns2025discrete}, it was demonstrated that the inclusion of a class of cutting planes
can reduce the run time significantly. These cutting planes take advantage of the fact that
the solution to a continuous relaxation of \eqref{eq:ip} has at most one connected component
of nodes in the graph, where a non-integer value is attained for $d_{i,j}$ and this value is
the same for the whole component; see also \cite{yang2025specialized}. The cutting planes
then are based on the minimum cut ratio for these fractional components of relaxation solutions
to relate the amount of capacity spent (that is, trust-region consumption) inside the component
to additional jumps that must occur and increase the total variation cost in integer feasible
solutions. This has been derived in detail in \cite{manns2025discrete} for the case $W = \{0,1\}$
and an extension to the non-binary case, which we require for our test cases \texttt{Helmholtz}
and \texttt{Choupi} is given in \S8.3.3 in \cite{severitt2025integer}.
We highlight that \eqref{eq:ip} has to be solved at the start of \cref{alg:oa}, the generated cutting
planes stay valid during the addition of the additional constraints due to the total variation
discretization, and may be generated if they exist when such constraints have been added already.

\subsubsection{Early termination}
%
% A key insight in trust-region algorithm analysis is that it 
In general, it is not necessary to solve trust-region
subproblems to global optimality to obtain convergence to stationary points. One only has to
compute a feasible point that realizes sufficient decrease in the model. 
In smooth, unconstrained optimization,
a typical sufficient decrease condition 
% is to realize at least as much decrease as a 
% steepest descent step
% along the model inside the trust region, called 
% This point of comparison, which can usually be computed inexpensively,
some fraction of the Cauchy point \cite[\S6]{conn2000trust}, 
which is often the steepest descent step. 
% A related
% argument is still possible in our setting. 
Assuming that we can solve our current instance of \eqref{eq:trp} up
to a constant factor of suboptimality, e.g., our computed solutions $\tilde{w}^{n,k,D_n}$
% that realize the predicted reduction $\pred^{n,k,D_n}$ do not necessarily minimize
% $\operatorname{\ref{eq:trp}}(w^{n}, \nabla F(w^{n}), D_n, \Delta_{0} 2^{-k})$ but instead satisfy
%
satisfies
\begin{gather}\label{eq:fixed_approximation_factor}
\pred^{n,k,D_n} \ge -\rho \min\operatorname{\ref{eq:trp}}(w^{n}, \nabla F(w^{n}), D_n, \Delta_{0} 2^{-k})
\end{gather}
for some fixed $\rho \in (0,1)$. 
Then, this factor occurs linearly in the estimates of the predicted
reduction and the remainder terms that estimate the mismatch between $F$ and the model approximation.
In particular, the asymptotic behavior of all terms remains the same,
and hence \cite[Lemma 5.3, Theorem 5.4, \& Theorem 5.8]{baraldi2025domain} 
and
\cref{thm:main_convergence_theorem}, i.e. the convergence
results for \ref{alg:patch_slip} and \ref{alg:randomized_patch_slip}, 
still hold.

When solving a discretized instance $\operatorname{TRP}^h$ of \eqref{eq:trp} to global optimality
with a branch-and-bound algorithm, the algorithm generates a sequence of incumbent solutions
with corresponding objective values, where the best achieved objective value is the
primal (in our case upper) bound $U$. 
The primal bound $U$ is compared to dual (in our case lower)
bounds that arise from by solving continuous relaxations with partial fixations generated by
branching. 
The algorithm terminates when the relative difference between $U$ and the smallest
of the current dual bounds $L$ falls below a given threshold $\varepsilon_{\textrm{gap}}$,
the so-called relative duality gap.
Thus, since $U$ is the negative of the predicted reduction for the incumbent solution, a
sufficient condition for the discretized condition $\pred^{n,k,D_n} \ge - \rho \min \operatorname{TRP}^h$
of \eqref{eq:fixed_approximation_factor} to hold is to terminate early once
$U \le \rho L$ and $L \le 0$ hold. Note that the optimal objective value of \eqref{eq:trp}
is always non-positive since the linearization point itself is feasible with objective
value zero.

In addition, the trust-region algorithm asymptotics are always guaranteed by the behavior for small
values of the trust-region radius due to the model approximation properties. It is thus possible to
accept suboptimal steps for large trust-region radii as long as they yield just some 
descent in the objective.

We thus propose the following early termination criterion for the subproblem solver.
If a minimum time $t_1$ has elapsed, a distinction is made depending on the
condition $\Delta \le \Delta_{\textrm{small}}$. If this is evaluated to true,
the solution process is terminated if $U \le \rho L$ and $\rho L \le -\varepsilon_{\textrm{num}}$
hold. Otherwise, it is terminated if $U \le \rho L$ holds and $\rho L \le -\varepsilon_{\textrm{num}}$
or, alternatively, if $U \le -\varepsilon_{\textrm{num}}$ and a maximum time $t_2$ has elapsed.
The value of $\Delta_{\textrm{small}}$ depends on the smallest volume of the grid cells
$Q_i$, $i \in \{1,\ldots,N_h\}$, that partition our domain $\Omega$ and constitute the control
function ansatz $w = \sum_{i=1}^{N_h} Q_i$ in our discretization. The time $t_1$ is there to avoid
preempting fast and powerful presolving techniques, primal heuristics, and cutting plane generations
that are already implemented in the solver. We have tabulated the chosen parameter values
for the early termination criterion in our implementation
in \Cref{tbl:parameter_values_early_termination} below.
\begin{table}
\caption{Parameter values for the early termination criterion.}\label{tbl:parameter_values_early_termination}
\begin{center}
\begin{tabular}{cccccc}
	\toprule
	$\varepsilon_{\textrm{gap}}$ 
	& $\varepsilon_{\textrm{num}}$ 
	& $\rho$ 
	& $t_1$ [s] 
	& $t_2$ [s] 
	& $\Delta_{\textrm{small}}$ \\
	\midrule
	$10^{-4}$ & $10^{-7}$ & $0.5$ & $10$ & $1200$ &  $8 \min\{ |Q_i| : i \in \{1,\ldots,N_h\}$ \\
	\bottomrule
\end{tabular}
\end{center}
\end{table}

\subsection{Lazy constraint handling for the solution of
	\texorpdfstring{($\operatorname{TRP}^h$)}{TRP-h}}
The discretized patch-based trust-region subproblem
($\operatorname{TRP}^h$) and \eqref{eq:trph_mip} are equivalent
if $\Phi$ contains all admissible $RT0^h_0$ vector fields, that
is, if
$\Phi = \{ \phi \in RT0^h_0 : \|\phi\|_{L^\infty(\Omega,\R^2)} \le 1\}$.
In addition, only finitely many elements are required since
the discreteness of $w$ and the piecewise constant ansatz for
$w$ on $\calQ^{\tau_h}$ imply that there are only finitely
many realizations of $w$ for fixed $h$, $\tau_h$.
Consequently, \eqref{eq:trph_mip} becomes an integer linear program
when $(g,w)_{L^2(D)}$ and $\|w - \bar{w}\|_{L^1(D)}$ are replaced by
appropriate quadratures or exact integral evaluation and slack 
variables to linearize the absolute value.
Since integer linear program formulations often feature an
exponential number of inequalities,
general purpose solvers for integer linear programs generally allow
to optimize with a subset of the inequalities and add new inequalities
on the fly. This is done through a callback mechanism that provides
a means to add one ore more violated inequalities once the solver 
discovers a new incumbent candidate on the current subset
of the description. We propose to use this mechanism with
\eqref{eq:tvh_qp} to compute the most-violated inequality once
a new incumbent candidate becomes available in order to avoid
re-solving the problem \eqref{eq:trph_mip} all the time.

\subsection{Heuristic initialization of the trust-region algorithm}\label{sec:heuristic_initialization}
In \cite[\S6.4]{severitt2023efficient},
the number of iterations of SLIP and
its overall runtime consumption were significantly reduced for the case $d = 1$ by
solving a continuous relaxation and 
then rounding the computed solution to an
integer-valued one. 
In the following experiments, we first solve
a smoothed discretization the continuous relaxation of \eqref{eq:p}
given by
\begin{gather}\label{eq:r}
\begin{aligned}
\min_{w \in L^1(\Omega)}\ 
& F(w) + \alpha \TV_\varepsilon(w) \\
\text{s.t.}\quad 
& w(x) \in \conv W = [w_1,w_M]
\text{ for almost every (a.e.) }
x \in \Omega
\end{aligned}\tag{R}
\end{gather}
to approximate global optimality (if \eqref{eq:r} is convex) or approximate stationarity
(if \eqref{eq:r} is nonconvex) with a gradient-based nonlinear programming solver.
Here, we use the same discretizations for $w$ and $F$ as for \eqref{eq:p}
and approximate $\TV$ by a differentiable variant $\TV_{\varepsilon}$.
Then, we round the resulting control, a piecewise constant function, 
to the nearest element in $W$ in every element of the piecewise constant ansatz, where ties
are broken arbitrarily. We use SciPy's \cite{2020SciPy-NMeth} implementation of
the L-BFGS-B algorithm from \cite{liu1989limited} as nonlinear programming solver, where
we specified a maximum iteration number of $1\,000$
and the projected gradient tolerance as $10^{-8}$.

Regarding $\TV_{\varepsilon}$, we first note that for a piecewise constant control function
ansatz $w = \sum_{i=1}^{N_h} w_i \chi_{Q_i}$ for a uniform decomposition of
the domain $\Omega$ into squares $Q_1$, $\ldots$, $Q_{N}$ for some $N \in \N$.
the derivative can be approximated in an isotropy-conforming way by means of a centralized
finite-difference stencil proposed in \cite{lai2009convergence}; see also \S1 in
\cite{caillaud2023error}, which results in the formula
\[
\TV_{\textrm{FD}}(w) \coloneqq h^2 \sum_{i=1}^{N_h} \bigl\|
\bigl(\begin{matrix}
(D_x w)_{i}
& (D_y w)_{i}
\end{matrix}
\bigr)^T
\bigr\|_2,
\]
where $(D_x w)_i$ denotes the finite-difference stencil applied to $w$
evaluated for cell $i$ and $h$ is the side length of the cells $Q_i$.
Since this is nonsmooth due to the singularity in the derivative of
the square root in the Euclidean norm, we replace 
$\TV_{\textrm{FD}}(w)$ by
\[ \TV_{\varepsilon}(w)
\coloneqq
h \sum_{i=1}^N \Phi_\varepsilon\Bigl(
h\bigl(\begin{matrix}
(D_x w)_{i}
& (D_y w)_{i}
\end{matrix}
\bigr)^T\Bigr), 
\]
where $\Phi_\varepsilon$ is the Huber loss, that is,
\[ \Phi_\varepsilon(v)
\coloneqq
\left\{
\begin{matrix}
\|v\|^2_2/(2\varepsilon) & \text{ if } \|v\| \le \varepsilon \\
\|v\| - \varepsilon/2 & \text{ else.}
\end{matrix}
\right.
\]
The Huber loss is used in combination with a different difference stencil
in \cite{chambolle2017accelerated}; see in particular the appendix therein. 
In our experiments, we use $\varepsilon = 10^{-2}$.

\subsection{Numerical Assessment}
We run \ref{alg:slip} on our four benchmark problems from 
\cref{sec:benchmark_problems} with the intention
to check if the proposed improvements / their combination
reduces the overall compute time of \ref{alg:slip}.
We report the objective values up to (only) three digits of
precision because we do not want to put too much weight on
very small differences in performance.

\subsubsection{Test Configuration 1}
In our first test configuration, we always initialize \ref{alg:slip}
with $w^0 \equiv 0$ and test the effect of the lazy constraint mechanism
(\texttt{Lazy = T/F}) for the handling of $\TVh$ from \eqref{eq:tvh}, the 
cutting plane strategy (\texttt{Cuts = T/F}) from \eqref{eq:ip}, and the 
early termination criterion (\texttt{Early = T/F}). We prescribe a limit 
of $20$ cumulative inner iterations and a time limit of $72$ hours. This 
makes eight settings in total that are tested against each other for all 
of the four test problems. 

If the early termination is activated (\texttt{Early = T}),
\ref{alg:slip} terminates after hitting the limit of cumulative
inner iterations within two hours, in many cases after a few minutes.
For all benchmark problems except \texttt{HELMHOLTZ}, where the
iteration limit is always reached, \ref{alg:slip}
terminates after reaching time limit of $72$ hours when
\texttt{Early = F}. Except for \texttt{HELMHOLTZ}, the achieved
objective values for the setting \texttt{Early = T} are lower than
using \texttt{Early = F}.
For \texttt{HELMHOLTZ}, the results are the opposite, which is
unsurprising since the number of accepted steps is between 9 and
11 for all eight settings and solving \eqref{eq:trp}
to global optimality if possible 
is likely to produce a step of higher quality. The compute times
are much lower, approximately between half and one order of magnitude
in favor of \texttt{Early = T} for \texttt{HELMHOLTZ}.

Overall, we can conclude that \texttt{Early = T} is a beneficial
option. For the lazy constraint and cutting plane mechanism, the
results are inconclusive and also difficult to assess due to the low 
compute times when the cumulative inner iteration limit is reached, which 
we therefore increase to $1000$ for the next test configuration.
We report the running times, cumulative inner iterations, accepted
steps,  achieved objective value for this test configuration
in \cref{tbl:slip_test1}.
\begin{table}[h]
\caption{Running times, cumulative inner iterations, accepted
steps, achieved objective value for Test Configuration 1
to assess the improvements of \ref{alg:slip} from
\cref{sec:slip}. Winners in terms of objective value and
compute times within the reported precision are highlighted with pink color.}\label{tbl:slip_test1}
\begin{adjustbox}{width=\textwidth}	
\begin{tabular}{l|llllllll}
\toprule
& \texttt{Lazy = T} 
& \texttt{Lazy = T} 
& \texttt{Lazy = T} 
& \texttt{Lazy = T} 
& \texttt{Lazy = F}
& \texttt{Lazy = F}
& \texttt{Lazy = F}
& \texttt{Lazy = F}
\\
& \texttt{Cuts = T}
& \texttt{Cuts = T}
& \texttt{Cuts = F}
& \texttt{Cuts = F}
& \texttt{Cuts = T}
& \texttt{Cuts = T}
& \texttt{Cuts = F}
& \texttt{Cuts = F} \\
& \texttt{Early = T}
& \texttt{Early = F} 
& \texttt{Early = T}
& \texttt{Early = F}
& \texttt{Early = T}
& \texttt{Early = F} 
& \texttt{Early = T}
& \texttt{Early = F} \\
\midrule
&\multicolumn{8}{c}{\texttt{AD}} \\
Running time [h]
& \adjustbox{bgcolor=pink}{$1.19$}                 % 111
& $72$ (limit)           % 110   
& $1.24$                 % 101
& $72$ (limit)           % 100
& $1.92$                 % 011
& $72$ (limit)           % 010   
& $1.92$                 % 001
& $72$ (limit)           % 000
\\
Cum.\ inner iterations 
& 20 (limit)             % 111
& 3                      % 110   
& 20 (limit)             % 101
& 3                      % 100
& 20 (limit)             % 011
& 2                      % 010   
& 20 (limit)             % 001
& 2                      % 000
\\
Accepted steps 
& 10             % 111
& 3              % 110   
& 10             % 101
& 3              % 100
& 11             % 011
& 2              % 010   
& 10             % 001
& 2              % 000
\\
Objective achieved
& \adjustbox{bgcolor=pink}{$6.74 \times 10^{-1}$}  % 111
& $6.75 \times 10^{-1}$  % 110   
& \adjustbox{bgcolor=pink}{$6.74 \times 10^{-1}$}  % 101
& $6.75 \times 10^{-1}$  % 100
& \adjustbox{bgcolor=pink}{$6.74 \times 10^{-1}$}  % 011
& $6.93 \times 10^{-1}$  % 010   
& \adjustbox{bgcolor=pink}{$6.74 \times 10^{-1}$}  % 001
& $6.93 \times 10^{-1}$  % 000
\\
\midrule
&\multicolumn{8}{c}{\texttt{CHOUPI}} \\
Running time [h]
& $0.0836$               % 111
& $72$ (limit)           % 110   
& $0.0758$               % 101
& $72$ (limit)           % 100
& $0.0694$               % 011
& $72$ (limit)           % 010   
& \adjustbox{bgcolor=pink}{$0.0638$}               % 001
& $72$ (limit)           % 000
\\
Cum.\ inner iterations 
& 20 (limit)             % 111
& 1                      % 110   
& 20 (limit)             % 101
& 1                      % 100
& 20 (limit)             % 011
& 1                      % 010   
& 20 (limit)             % 001
& 1                      % 000
\\
Accepted steps 
& 11             % 111
& 1              % 110   
& 11             % 101
& 1              % 100
& 11             % 011
& 1              % 010   
& 11             % 001
& 1              % 000
\\
Objective achieved
& \adjustbox{bgcolor=pink}{$1.49 \times 10^{-2}$}  % 111
& $2.12 \times 10^{-1}$  % 110   
& \adjustbox{bgcolor=pink}{$1.49 \times 10^{-2}$}  % 101
& $2.12 \times 10^{-1}$  % 100
& $1.50 \times 10^{-2}$  % 011
& $2.12 \times 10^{-1}$  % 010   
& \adjustbox{bgcolor=pink}{$1.49 \times 10^{-2}$}  % 001
& $2.12 \times 10^{-1}$  % 000
\\
\midrule
&\multicolumn{8}{c}{\texttt{EXACT}} \\
Running time [h]
& $0.0853$               % 111
& $72$ (limit)           % 110   
& \adjustbox{bgcolor=pink}{$0.0797$} % 101
& $72$ (limit)           % 100
& $0.116$                % 011
& $72$ (limit)           % 010   
& $0.0944$               % 001
& $72$ (limit)           % 000
\\
Cum.\ inner iterations 
& 20 (limit)             % 111
& 1                      % 110   
& 20 (limit)             % 101
& 1                      % 100
& 20 (limit)             % 011
& 1                      % 010   
& 20 (limit)             % 001
& 1                      % 000
\\
Accepted steps 
& 8             % 111
& 1              % 110   
& 8               % 101
& 1              % 100
& 7             % 011
& 1              % 010   
& 7             % 001
& 1              % 000
\\
Objective achieved
& \adjustbox{bgcolor=pink}{$3.21 \times 10^{-3}$}  % 111
& $3.82 \times 10^{-2}$  % 110   
& $3.32 \times 10^{-3}$  % 101
& $3.82 \times 10^{-2}$  % 100
& $3.40 \times 10^{-3}$  % 011
& $3.82 \times 10^{-2}$  % 010   
& $3.40 \times 10^{-3}$  % 001
& $3.82 \times 10^{-2}$  % 000 
\\
\midrule
&\multicolumn{8}{c}{\texttt{HELMHOLTZ}} \\
Running time [h]
& $0.163$                % 111
& $1.82$                 % 110   
& $0.157$                % 101
& $1.19$                 % 100
& $0.151$                % 011
& $0.566$                % 010   
& \adjustbox{bgcolor=pink}{$0.145$} % 001
& $0.558$       % 000
\\
Cum.\ inner iterations 
& 20 (limit)             % 111
& 20 (limit)             % 110   
& 20 (limit)             % 101
& 20 (limit)             % 100
& 20 (limit)             % 011
& 20 (limit)             % 010   
& 20 (limit)             % 001
& 20 (limit)             % 000
\\
Accepted steps 
& 11             % 111
& 9              % 110   
& 11               % 101
& 10              % 100
& 10             % 011
& 10              % 010   
& 10             % 001
& 10              % 000
\\
Objective achieved
& $5.77 \times 10^{-2}$  % 111
& $5.35 \times 10^{-2}$  % 110   
& $5.80 \times 10^{-2}$  % 101
& \adjustbox{bgcolor=pink}{$5.02 \times 10^{-2}$}  % 100
& $5.50 \times 10^{-2}$  % 011
& $5.23 \times 10^{-2}$  % 010   
& $5.50 \times 10^{-2}$  % 001
& $5.23 \times 10^{-2}$  % 000
\\
\bottomrule
\end{tabular}
\end{adjustbox}
\end{table}

\subsubsection{Test Configuration 2}\label{sec:slip_test_config_2}
In our second test configuration, we set activate early
termination for all computations. Again, we test the
effect of the lazy constraint mechanism (\texttt{Lazy = T/F})
and the cutting plane strategy (\texttt{Cuts = T/F}). In addition,
we test two different initialization options for \ref{alg:slip},
$w^0 \equiv 0$ (\texttt{Heur.\ = F}) as before and $w^0$ being 
the solution, that is, approximate stationary point, to a 
continuous relaxation of  \eqref{eq:p} as described in 
\cref{sec:performance_improvements_for_slip}
(\texttt{Heur.\ = T}). Therefore, we again have eight different
settings that are tested on the four benchmark problems. We prescribe a
limit of $1000$ cumulative inner iterations and 
and again a $72$ hour time limit on \ref{alg:slip}. The computation of 
the initial iterate is included if \texttt{Heur.\ = T} to enable
a fair comparison of the achieved objective values.

We observe that all settings reach time limit of $72$ hours for
benchmarks \texttt{AD}, \texttt{CHOUPI}, and \texttt{EXACT}.
For \texttt{HELMHOLTZ}, \ref{alg:slip} terminates for
the four settings with \texttt{Heur.\ = T} due to a contraction
of the trust-region radius, thereby constituting a successful
completion for the fixed discretization. The four settings with 
\texttt{Heur.\ = F} reach the cumulative inner 
iteration limit of $1000$.
For all of the four benchmark problems, the best objective
values within a precision of three digits are achieved (not
always exclusively) for the setting \texttt{Lazy = T},
\texttt{Cuts = T}, and \texttt{Heur.\ = T}.
For \texttt{HELMHOLTZ}, \ref{alg:slip} terminates for
all of the three optimizations producing the best objective value 
without reaching time or cumulative inner iteration limit.
Considering the compute times
for these optimizations, the setting \texttt{Lazy = T},
\texttt{Cuts = T}, \texttt{Heur.\ = T} had the lowest time consumption
of $21.66$ hours compared to $35.42$ hours for
\texttt{Lazy = F}, \texttt{Cuts = T}, \texttt{Heur.\ = T}
and $41.25$ hours for
\texttt{Lazy = F}, \texttt{Cuts = F}, \texttt{Heur.\ = T}.
We therefore conclude that using all of the three proposed
improvements is beneficial and improves the algorithm.

We highlight that \texttt{Heur.\ = T} has by far the largest 
improvement effect and for benchmark problems \texttt{CHOUPI}, 
\texttt{EXACT}, and \texttt{HELMHOLTZ}, all of the optimizations
that produce the best objective within three digits of precision
have \texttt{Heur.\ = T}. For benchmark problems \texttt{AD}, all of
the eight optimizations produce the same objective value within three 
digits of precision within the prescribed time limit.
We believe that this makes sense since the relax-\&-round strategy
essentially bypasses many costly integer optimization problems
and leads to an initialization closer to stationary point.
This is of course most pronounced for \texttt{EXACT}, where
the continuous relaxation is a strictly convex problem that
has a unique binary-valued solution before discretization.
Consequently, we only have zero or one accepted step in \ref{alg:slip}
and almost immediately, a computationally very expensive instance
of \eqref{eq:trp} is generated that cannot be solved within the 
prescribed time limit for \ref{alg:slip}. This often happens when the 
fractional component of the LP relaxation coincides with the whole domain 
or the whole patch so that the LP relaxation is extremely weak, thereby 
causing a very long solution time while at the same time rendering the 
cutting planes inefficient; see also \cite{severitt2025integer}. From our experience, 
this case often happens when the solution to the subproblem is zero, 
causing the algorithm to terminate afterwards.
As a consequence, the results for \texttt{Heur.\ = T} are not very
informative for \texttt{EXACT} and in contrast to the other three
benchmark problems, we will use \texttt{Heur.\ = F} for \texttt{EXACT}.

\begin{table}[h]
	\caption{Running times (including heuristic initalization
		computation if \texttt{Heur.\ = T}), cumulative inner iterations, 
		accepted steps, objective of initial iterate, and achieved objective 
		values for Test Configuration 2 to assess the improvements of 
		\ref{alg:slip} from \cref{sec:slip}. Winners in terms of objective
		value within the reported precision are highlighted with pink color.}
	\begin{adjustbox}{width=\textwidth}	
		\begin{tabular}{l|llllllll}
			\toprule
			& \texttt{Lazy = T}
			& \texttt{Lazy = T}
			& \texttt{Lazy = T}
			& \texttt{Lazy = T}
			& \texttt{Lazy = F}
			& \texttt{Lazy = F}
			& \texttt{Lazy = F}
			& \texttt{Lazy = F}
			\\
			& \texttt{Cuts = T}
			& \texttt{Cuts = T}
			& \texttt{Cuts = F}
			& \texttt{Cuts = F}
			& \texttt{Cuts = T}
			& \texttt{Cuts = T}
			& \texttt{Cuts = F}
			& \texttt{Cuts = F} \\
			& \texttt{Heur.\ = T}
			& \texttt{Heur.\ = F} 
			& \texttt{Heur.\ = T}
			& \texttt{Heur.\ = F}
			& \texttt{Heur.\ = T}
			& \texttt{Heur.\ = F}
			& \texttt{Heur.\ = T}
			& \texttt{Heur.\ = F} \\
			\midrule
			&\multicolumn{8}{c}{\texttt{AD}} \\
			Running time [all,h]
			& 72 (limit)  % 111
			& 72 (limit) % 110   
			& 72 (limit) % 101
			& 72 (limit) % 100
			& 72 (limit) % 011
			& 72 (limit) % 010   
			& 72 (limit) % 001
			& 72 (limit) % 000
			\\
			Running time [heur.,h] 
			& 0.85 % 111
			& 0  % 110   
			& 0.86  % 101
			& 0  % 100
			& 0.86  % 011 
			& 0  % 010   
			& 0.86  % 001
			& 0  % 000
			\\				
			Cum.\ inner iterations 
			& 59 % 111
			& 53  % 110   
			& 36  % 101
			& 93  % 100
			& 11  % 011
			& 29  % 010   
			& 9   % 001
			& 24  % 000
			\\
			Accepted steps 
			& 26 % 111
			& 24 % 110   
			& 14 % 101
			& 46 % 100
			& 3  % 011
			& 17 % 010   
			& 3  % 001
			& 11 % 000			
			\\			
			Initial objective
			& $6.74 \times 10^{-1}$ % 111
			& $1.06$  % 110   
			& $6.74 \times 10^{-1}$ % 101
			& $1.06$                % 100
			& $6.74 \times 10^{-1}$ % 011
			& $1.06$                % 010   
			& $6.74 \times 10^{-1}$ % 001
			& $1.06$                % 000
			\\				
			Objective achieved
			& \adjustbox{bgcolor=pink}{$6.74 \times 10^{-1}$} % 111
			& \adjustbox{bgcolor=pink}{$6.74 \times 10^{-1}$} % 110   
			& \adjustbox{bgcolor=pink}{$6.74 \times 10^{-1}$} % 101
			& \adjustbox{bgcolor=pink}{$6.74 \times 10^{-1}$} % 100
			& \adjustbox{bgcolor=pink}{$6.74 \times 10^{-1}$} % 011
			& \adjustbox{bgcolor=pink}{$6.74 \times 10^{-1}$} % 010   
			& \adjustbox{bgcolor=pink}{$6.74 \times 10^{-1}$} % 001
			& \adjustbox{bgcolor=pink}{$6.74 \times 10^{-1}$} % 000
			\\			
			\midrule
			&\multicolumn{8}{c}{\texttt{CHOUPI}} \\
			Running time [all,h] 
			& 72 (limit) % 111
			& 72 (limit) % 110   
			& 72 (limit) % 101
			& 72 (limit) % 100
			& 72 (limit) % 011
			& 72 (limit) % 010   
			& 72 (limit) % 001
			& 72 (limit) % 000
			\\
			Running time [heur.,s] 
			& 31   % 111
			& 0    % 110   
			& 31   % 101
			& 0    % 100
			& 31   % 011 
			& 0    % 010   
			& 31   % 001
			& 0    % 000
			\\			
			Cum.\ inner iterations 
			& 236  % 111
			& 403  % 110   
			& 210  % 101
			& 400  % 100
			& 104  % 011
			& 295  % 010   
			& 90   % 001
			& 280  % 000
			\\
			Accepted steps 
			& 90  % 111
			& 191 % 110   
			& 82  % 101
			& 192 % 100
			& 42  % 011
			& 146 % 010   
			& 39  % 001
			& 139 % 000			
			\\
			Initial objective 
			& $3.44 \times 10^{-3}$  % 111
			& $2.83 \times 10^{-1}$  % 110   
			& $3.44 \times 10^{-3}$  % 101
			& $2.83 \times 10^{-1}$  % 100
			& $3.44 \times 10^{-3}$  % 011
			& $2.83 \times 10^{-1}$  % 010   
			& $3.44 \times 10^{-3}$  % 001
			& $2.83 \times 10^{-1}$  % 000			
			\\			
			Objective achieved
			& \adjustbox{bgcolor=pink}{$3.05 \times 10^{-3}$}  % 111
			& $3.18 \times 10^{-3}$  % 110   
			& $3.07 \times 10^{-3}$  % 101
			& $3.11 \times 10^{-3}$  % 100
			& $3.16 \times 10^{-3}$  % 011
			& $3.32 \times 10^{-3}$  % 010   
			& $3.15 \times 10^{-3}$ % 001
			& $3.40 \times 10^{-3}$ % 000
			\\
			\midrule			
			&\multicolumn{8}{c}{\texttt{EXACT}} \\
			Running time [all,h]
			&  72 (limit) % 111
			&  72 (limit) % 110   
			&  72 (limit) % 101
			&  72 (limit) % 100
			&  72 (limit) % 011
			&  72 (limit) % 010   
			&  72 (limit) % 001
			&  72 (limit) % 000
			\\
			Running time [heur.,s] 
			&  165  % 111
			&  0    % 110   
			&  165  % 101
			&  0    % 100
			&  164  % 011
			&  0    % 010   
			&  166  % 001
			&  0    % 000
			\\			
			Cum.\ inner iterations 
			& 10  % 111
			& 38  % 110   
			& 10  % 101
			& 42  % 100
			& 0   % 011
			& 60  % 010   
			& 0   % 001
			& 69  % 000
			\\
			Accepted steps 
			& 1  % 111
			& 17 % 110   
			& 1  % 101
			& 18 % 100
			& 0  % 011
			& 28 % 010   
			& 0  % 001
			& 31 % 000			
			\\			
			Initial objective
			& $3.10 \times 10^{-3}$  % 111
			& $2.81 \times 10^{-1}$  % 110   
			& $3.10 \times 10^{-3}$  % 101
			& $2.81 \times 10^{-1}$  % 100
			& $3.10 \times 10^{-3}$  % 011
			& $2.81 \times 10^{-1}$  % 010   
			& $3.10 \times 10^{-3}$  % 001
			& $2.81 \times 10^{-1}$  % 000
			\\
			Objective achieved
			& \adjustbox{bgcolor=pink}{$3.10 \times 10^{-3}$}  % 111
			& $3.12 \times 10^{-3}$  % 110   
			& \adjustbox{bgcolor=pink}{$3.10 \times 10^{-3}$}  % 101
			& $3.11 \times 10^{-3}$  % 100
			& \adjustbox{bgcolor=pink}{$3.10 \times 10^{-3}$}  % 011
			& $3.13 \times 10^{-3}$  % 010   
			& \adjustbox{bgcolor=pink}{$3.10 \times 10^{-3}$}  % 001
			& $3.12 \times 10^{-3}$  % 000
			\\
			\midrule
			&\multicolumn{8}{c}{\texttt{HELMHOLTZ}} \\
			Running time [all,h] 
			& 21.66 % 111
			& 10.4  % 110   
			& 11.3  % 101
			& 9.29  % 100
			& 35.42 % 011
			& 8.74  % 010   
			& 41.25 % 001
			& 8.62  % 000
			\\
			Running time [heur.,h] 
			& 6.63 % 111
			& 0    % 110   
			& 6.62 % 101
			& 0    % 100
			& 6.64 % 011
			& 0    % 010   
			& 6.61 % 001
			& 0    % 000
			\\
			Cum.\ inner iterations 
			& 260          % 111
			& 1000 (limit) % 110   
			& 145          % 101
			& 1000 (limit) % 100
			& 277          % 011
			& 1000 (limit) % 010   
			& 263          % 001
			& 1000 (limit) % 000
			\\
			Accepted steps 
			& 89  % 111
			& 475 % 110   
			& 52  % 101
			& 467 % 100
			& 91  % 011
			& 460 % 010   
			& 87  % 001
			& 466 % 000
			\\			
			Initial objective
			& $2.34 \times 10^{-2}$  % 111
			& $2.92 \times 10^{-1}$  % 110   
			& $2.34 \times 10^{-2}$  % 101
			& $2.92 \times 10^{-1}$  % 100
			& $2.34 \times 10^{-2}$  % 011
			& $2.92 \times 10^{-1}$  % 010   
			& $2.34 \times 10^{-2}$  % 001
			& $2.92 \times 10^{-1}$  % 000
			\\			
			Objective achieved
			& \adjustbox{bgcolor=pink}{$2.25 \times 10^{-2}$}  % 111
			& $2.38 \times 10^{-2}$  % 110   
			& $2.26 \times 10^{-2}$  % 101
			& $2.39 \times 10^{-2}$  % 100
			& \adjustbox{bgcolor=pink}{$2.25 \times 10^{-2}$}  % 011
			& $2.40 \times 10^{-2}$  % 010   
			& \adjustbox{bgcolor=pink}{$2.25 \times 10^{-2}$}  % 001
			& $2.40 \times 10^{-2}$  % 000
			\\			
			\bottomrule
		\end{tabular}
	\end{adjustbox}
\end{table}

\section{Performance Improvement for \texorpdfstring{\ref{alg:patch_slip}}{Patch SLIP}}\label{sec:patch_slip}
Now we state and describe improvements for 
the patch version of \ref{alg:slip} from \cite{baraldi2025domain}, 
given in \ref{alg:patch_slip}. 
As before, we describe the algorithm itself, 
then detail the two performance improvements
of various patch shapes/sizes and 
using only the greedy step. 
We then discuss performance
of the tests on the benchmark problems 
compared to \ref{alg:slip}.

\subsection{Patch SLIP}
\ref{alg:patch_slip} was recently proposed in \cite{baraldi2025domain}
for solving \eqref{eq:p} via breaking the domain
$\Omega$ into patches and optimizing the patches separately,
thereby inducing cheaper subproblems than \ref{alg:slip}.
Numerical results in \cite{baraldi2025domain} detail speedups of over 125x 
on certain numerical examples over its counterpart \ref{alg:slip}. 
\ref{alg:patch_slip} separates the patch updates into 
acceptable candidates $\cA$ and working candidates $\cW$. 
The inner loop, in which the patches are updated, 
iterates over the working set $\cW$; once $\cW$ is empty, we exit the inner loop.
Within the inner loop,
trial iterates solved over each patch/radius combination yield predicted
and actual reductions. 
If  $\pred^{n,k,D} + \Delta_0 2^{-k}$ for a specific patch is greater 
than the maximum $\ared^{n,k,D}$ over all $D$, 
it is added to the working set over which we continually improve 
that block.
If the block update sufficiently decreases
the cost function by some fraction of the predicted reduction, 
then it is added to the acceptable set $\cA$ and removed from the  
working set $\cW$. 
Once $\cW$ is empty when either $\pred^{n,k,D}$ is zero and hence
stationary or satisfies acceptable step criteria, we then terminate the inner loop of the
algorithm. 
Afterwards, we enter the update loop in which we either 
test whether all block updates decrease the cost function 
one-by-one in descending order from the $\ared^{n,k,D}$
computed from the inner loop; this was the aforementioned ``greedy'' step. 
We propose a heuristic to speed up this computation below. 
The algorithm \ref{alg:patch_slip} is reproduced here 
with the modification from \cite{baraldi2025domain}. 

\begin{algorithm}[htb]
	\def\thealgorithm{Patch-SLIP}
	\refstepcounter{algorithm}
	\caption*{\textbf{Algorithm Patch-SLIP: Sequential linear integer programming method
			with greedy patch updates}}\label{alg:patch_slip}
	
	\textbf{Input:} $F$ satisfying \cref{ass:standing} with smoothness constant $L_{\nabla F}$,
	$\Delta^0 > 0$, $w^0 \in \BVW(\Omega)$, $\sigma \in (0,1)$, set of patches $\calD$,
	flag \texttt{ONLY\_USE\_GREEDY\_STEP} $ \in \{\texttt{T},\texttt{F}\}$.
	\begin{algorithmic}[1]
		\For{$n = 0,1,2,\ldots$}
		\State Set $\calA \gets \emptyset$, $\calW \gets \{ (0,D) \,|\, D \in \calD \}$.
		\For{$k = 0,1,2,\ldots$}\label{ln:tabulation_loop}
		\While{$(k,D) \in \calW$}\label{ln:inner_tabulation_loop}
		\State $\tilde{w}^{n,k,D} \gets$
		minimizer of $\text{{\ref{eq:trp}}}(w^{n}, \nabla F(w^{n}), D, \Delta^{0} 2^{-k})$. \label{ln:parallel_trstep}
		\State $\pred^{n,k,D} \gets
		(\nabla F(w^{n}), w^{n} - \tilde{w}^{n,k,D})_{L^2}
		+ \alpha \TV(w^{n}) - \alpha \TV(\tilde{w}^{k,n,D})$\label{ln:parallel_pred}
		\State $\ared^{n,k,D} \gets F(w^{n}) + \alpha \TV(w^{n})
		- F(\tilde{w}^{n,k,D}) - \alpha\TV(\tilde{w}^{n,k,D})$
		\If{$\ared^{n,k,D} \ge \sigma \pred^{n,k,D}$ \textbf{and} $\pred^{n,k,D} > 0$}\label{ln:suff_dec}
		\State $\calA \gets \calA\cup \{ (k,D) \}$.
		\ElsIf{$\pred^{n,k,D} > 0$ \textbf{and} $\max_{(\tilde{k},\tilde{D}) \in \calA} \ared^{n,\tilde{k},\tilde{D}}
			< \pred^{n,k,D} + L_{\nabla F} \Delta^{0}
			2^{-k}$}\label{ln:not_pred_zero_or_ared_domination}
		\State $\calW \gets \calW \cup \{(k + 1,D)\}$\label{ln:increase_k}
		\EndIf
		\State $\calW \gets \calW \setminus \{(k,D)\}$.
		\EndWhile
		\If{$\calW= \emptyset$}
		\State \textbf{break}
		\EndIf
		\EndFor
		\If{$\calA = \emptyset$}
		\State \textbf{return} $w^n$ is stationary.
		\EndIf
		\State $\bar{w}^n \gets w^n$.
		\State $\bar{j}^0 \gets F(w^n) + \alpha \TV(w^n)$.
		\While{$\calA \neq \emptyset$}\label{ln:calA_while_loop}
		\State $\bar{k},\bar{D} \gets \argmax\{\ared^{n,k,D}\,|\,(k,D) \in \calA \}$\label{ln:greedy}
		\State $\tilde{w}^n \gets \bar{w}^n\chi_{\Omega\setminus \bar{D}}
		+ \chi_{\bar{D}}\tilde{w}^{n,\bar{k}, \bar{D}}$
		\State $\bar{j} \gets F(\tilde{w}^n) + \alpha \TV(\tilde{w}^n )$
		\If{$\bar{j} < \bar{j}^0$}\label{ln:apply_reduction}
		\State $\bar{w}^n \gets \tilde{w}^n$, $\bar{j}^0 \gets \bar{j}$,
		$\calA \gets \calA\setminus \{ (\bar{k},\bar{D}) \}$.
		\EndIf
		\If{$\bar{j} \ge \bar{j}^0$ or \texttt{ONLY\_USE\_GREEDY\_STEP}}
		\State \textbf{break}
		\EndIf
		\EndWhile
		\State $w^{n + 1} \gets \bar{w}^n$
		\EndFor
	\end{algorithmic}
\end{algorithm}

\subsection{Performance Improvements}
For \cref{alg:patch_slip}, we assess the influence of two different degree of freedom on the performance,
namely the patch layout and optional improvements in \ref{alg:patch_slip} over the greedy step that may
be activated by setting \texttt{ONLY\_USE\_GREEDY\_STEP = F}.

\subsubsection{Patch Layout}\label{sec:domain_decomposition_mechanisms}
In \cite{baraldi2025domain} the domain was decomposed into square-shaped patches of equal size. We propose to compare this approach
to a decomposition into rectangles or stripes. The main result in \cite{manns2025discrete} showed the existence of the so-called fractional component
for the linear programming relaxation to the integer linear subproblems. It was argued that there seems to be a correlation between the size of
the fractional component in the root linear program and the computational demand of the integer program. This theoretically motivates to uses squares
and smaller patches in general. Specifically, if we use squares, we have a relatively small ratio volume to boundary ratio, which would be reflected
in the total variation and thus, our goal of a decomposition into squares is to either obtain smaller fractional components or fractional components
which have a larger volume to boundary ratio, which should result in smaller duality gaps; see also the theory on the cutting plane-based improvement.
On the other hand, a rectangular domain with a small and a long axis would require less fixed variables to turn the two-dimensional subproblems into
one-dimensional ones. The latter are computationally much less expensive and could even be solved with a dynamic programming approach; see 
\cite{severitt2023efficient}.

\subsubsection{Improvement over Greedy Step}
If $\calA \neq \emptyset$ in \ref{alg:patch_slip}, the combination $(k,D) \in \calA$ that realizes the largest actual reduction is the
greedy element and always used to improve the objective in the computation of $\bar{w}^n$ to improve over $w^n$. Note that $\bar{j} \ge \bar{j}^0$
always holds in the first execution of the while-loop starting at Line \ref{ln:calA_while_loop}. If \texttt{ONLY\_USE\_GREEDY\_STEP = T}, then
the $n$-loop iteration is completed and the algorithm moves on to the next iteration. However, this means that in most cases much information
is discarded since instances of \eqref{eq:trp} have been solved on all patches. Because the overlap between the patches
may be relatively small, it seems likely that at least the other elements in $\calA$ may improve the objective over the greedy step
by changing the control in different parts of $\Omega$. Therefore, we proceed as follows if \texttt{ONLY\_USE\_GREEDY\_STEP = F}. We
remove the greedy step from $\calA$ and take the element of $\calA$ that now has the largest actual reduction. We check if adding
this step to the trial control decreases the objective further; if affirmative, we accept this addition, remove this element from $\calA$
and start anew. If it does not decrease the objective further, we accept the previous trial control and the $n$-loop iteration is completed.
In this way, information of the subproblem solutions that led not to the selected greedy element in $\calA$ but to other suitable greedy candidates 
is not discarded, but instead used to heuristically improve the step over the greedy element. We note that this improvement strategy
was activated in all computations in the previous article \cite{baraldi2025domain}.

\subsection{Numerical Assessment}
We assess the performance of \ref{alg:patch_slip} on all of our
benchmark problems using five different patch layouts,
namely, $2 \times 2$, $3 \times 3$, and $4 \times 4$ squares
and $1 \times 4$ and $4 \times 1$ rectangles. The overlap between 
neighboring patches is always $10\,\%$ in every coordinate axis.
In addition, we assess the effect of the 
\texttt{ONLY\_USE\_GREEDY\_STEP = T/F} in \ref{alg:patch_slip}
so that we obtain $10$ executions of \ref{alg:patch_slip}
per benchmark problem in this assessment.
Due to the findings of \ref{alg:slip} in \cref{sec:slip}, we use
\texttt{Lazy = T}, \texttt{Cuts = T}, and \texttt{Early = T}
for all executions. We use \texttt{Heur.\ = T} for the benchmark
problems \texttt{AD}, \texttt{CHOUPI}, and \texttt{HELMHOLTZ}.
Since \texttt{Heur.\ = T} implies that \ref{alg:patch_slip} would
be initialized extremely close or at the solution 
to benchmark problem \texttt{EXACT}, we set \texttt{Heur.\ = F}
for \texttt{EXACT}. We prescribe a limit of $1000$
outer iterations and a time limit of $72$ hours. This makes eight 
settings in total that are tested against each other for all 
of the four benchmark problems. 

As for \ref{alg:slip} in \cref{sec:slip_test_config_2}, we obtain that 
\ref{alg:patch_slip} reaches the prescribed time limit for almost
all of the  assessed settings for benchmark problems \texttt{AD}, \texttt{CHOUPI}, 
\texttt{EXACT}, where problem \texttt{AD} with \texttt{ONLY\_USE\_GREEDY\_STEP = T}
and a $4 \times 4$ patch layout is the only exception (terminating after contraction
of the trust-region radius after $51.71$ hours. It terminates for benchmark problem \texttt{HELMHOLTZ}
due to a contraction of the trust-region radius for all patch layouts if \texttt{ONLY\_USE\_GREEDY\_STEP = F}
and for all patch layouts except $4 \times 4$ if \texttt{ONLY\_USE\_GREEDY\_STEP = T}.
While there are a few exceptions, the achieved objective values are generally lower 
and the compute times to reach the best objective value achieved by the \ref{alg:slip}
are generally higher if \texttt{ONLY\_USE\_GREEDY\_STEP = F}. In particular,
the best objective value achieved by \ref{alg:slip} is not achieved by 
\ref{alg:patch_slip} in half of the cases if \texttt{ONLY\_USE\_GREEDY\_STEP = F}
while this only happens three times if \texttt{ONLY\_USE\_GREEDY\_STEP = T}.
Consequently, we assess that it makes sense to include instead of discard
the information from the subproblems that did not maximize the actual
reduction in \ref{alg:patch_slip} and to set \texttt{ONLY\_USE\_GREEDY\_STEP = F}
in further experiments. We thus determine the best-performing patch layouts for
the setting \texttt{ONLY\_USE\_GREEDY\_STEP = F}. To this end,
we consider the benchmark problems one-by-one.

\paragraph{Benchmark \texttt{AD}}
The achieved objective value coincides for all five patch layouts
and for the execution of \ref{alg:slip} with the same performance
improvements even up to six digits of precision. In addition, the last
of the generated \eqref{eq:trp}-instances takes most of the
given run time window without completing. We have observed this situation
frequently when the optimization has effectively completed and the
last instance has a weak LP relaxation. Consequently, we find it sensible
to determine the performance of \ref{alg:patch_slip} based
on the time until the final objective value of \ref{alg:slip}
is reached within six digits of precision. The run time for this
is by far the lowest for the patch $1 \times 4$, which only takes
$2.60$ hours.

\paragraph{Benchmark \texttt{CHOUPI}}
The achieved objective values are all lower than for \ref{alg:slip}
with the rectangular patch layouts $1 \times 4$ and $4 \times 1$ both 
achieving $3.03 \times 10^{-3}$ compared to $3.04 \times 10^{-3}$ for
$4 \times 4$ and $3.05 \times 10^{-3}$ for $2 \times 2$ for $3 \times 3$.
In addition, the compute times to reach the achieved objective
value of the corresponding run of \ref{alg:slip} are significantly lower
for these two patch configurations, $32.61$ and $31.69$ hours
for $1 \times 4$ and $4 \times 1$ compared to $68.92$, $61.05$,
and $54.20$ for $2\times 2$, $3 \times 3$, and $4 \times 4$.
Due to this shorter time, we will use this one for comparisons to
\ref{alg:slip} and a randomized variant of \ref{alg:patch_slip} 
further down below but emphasize that we refrain from assessing
a clear best-performing patch configuration for this benchmark problem.

\paragraph{Benchmark \texttt{EXACT}}
The patch layout $3 \times 3$ is the only one to 
achieve the best objective value of $3.10\times 10^{-3}$.
This is the value of achieved by rounding of the already almost 
binary-valued solution of the continuous, which is very close to the 
global optimum by construction of this benchmark problem;
see \cref{subsec:exact}.

\paragraph{Benchmark \texttt{HELMHOLTZ}}
All patch layouts except $4 \times 4$ achieve an objective value
of $2.25 \times 10^{-2}$. For all of the other four configurations,
the run times vary between $16.44$ and $33.56$ hours but the most
of the computate time is spent in the final iterations with tiny
in $w^n$ and the corresponding objective values. With caution
being advised here, we deem the patch layout $4 \times 1$ the winner
of the comparison since the best objective value for the corresponding 
configuration of \ref{alg:slip} is already been achieved here
in only $9.97$ hours, while the other layouts require $11.74$ hours 
($2 \times 2$) or $16.30$ hours ($1 \times 4$) to do so or do
not achieve ($3 \times 3$, $4\times 4$) the same value within a
precision of six digits.

\paragraph{Patch Benchmark Conclusions}
We report the data mentioned above, on which we have
based our assessment in \Cref{tbl:data_patch_slip}. 
We also note that we observe that the finest patch layout $4 \times 4$
gives the worst achieved objective values for all benchmark problems
except for \texttt{AD}, where all layouts yield the same value.
While we do not have enough evidence to make a definitive statement,
it may be the case that this patch layout is too fine to incorporate information
from enough parts of the domain to achieve convergence to stationary 
points of the same quality as the other layouts.
This may to some extent also be observed in a visual comparison of the final iterates
of \ref{alg:patch_slip} for the different patch layouts for benchmark \texttt{EXACT}
in \Cref{fig:exact_sol_cmp} (recall that the analytic solution of the undiscretized
problem is a circle).

\begin{figure}[ht]
	\centering
	\begin{subfigure}[b]{0.19\textwidth}
	\centering 
	{%
		\setlength{\fboxsep}{0pt}%
		\setlength{\fboxrule}{1pt}%
		\fbox{\includegraphics[width=\textwidth]{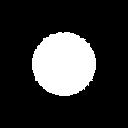}}
	}%
	\caption{{\small $2\times 2$}}
	\end{subfigure}
	\hfill
	\begin{subfigure}[b]{0.19\textwidth}
	\centering 
	{%
		\setlength{\fboxsep}{0pt}%
		\setlength{\fboxrule}{1pt}%
		\fbox{\includegraphics[width=\textwidth]{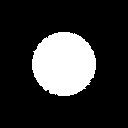}}
	}%
	\caption{{\small $3\times 3$}}
	\end{subfigure}
	\hfill
	\begin{subfigure}[b]{0.19\textwidth}
		\centering 
		{%
			\setlength{\fboxsep}{0pt}%
			\setlength{\fboxrule}{1pt}%
			\fbox{\includegraphics[width=\textwidth]{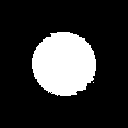}}
		}%
		\caption{{\small $4\times 4$}}
	\end{subfigure}
	\hfill
	\begin{subfigure}[b]{0.19\textwidth}  
		\centering 
		{%
			\setlength{\fboxsep}{0pt}%
			\setlength{\fboxrule}{1pt}%
			\fbox{\includegraphics[width=\textwidth]{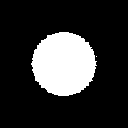}}
		}%
		\caption{{\small $1\times 4$}}
	\end{subfigure}
	\hfill
	\begin{subfigure}[b]{0.19\textwidth}  
		\centering 
		{%
			\setlength{\fboxsep}{0pt}%
			\setlength{\fboxrule}{1pt}%
			\fbox{\includegraphics[width=\textwidth]{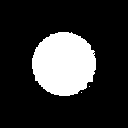}}
		}%
		\caption{{\small $4\times 1$}}
	\end{subfigure}
	\caption{Visualization of final iterate of
		\ref{alg:patch_slip} for \texttt{EXACT}
		for all patch layouts
		(black = $0$, white = $1$).}
	\label{fig:exact_sol_cmp}
\end{figure}

\begin{table}[h]
	\caption{Running time, time until the best objective produced
	by the corresponding configuration of \ref{alg:slip} is achieved, 
	number of completed iterations, and achieve objective value for
	\ref{alg:patch_slip} of \cref{sec:patch_slip} for different
	patch layouts. Winners in terms of objective value within the 
	reported precisioe are highlighted with pink color.
	The time to full domain objective was measured by
	comparing the objective values up to six digits of precision.}
	\label{tbl:data_patch_slip}
	\begin{adjustbox}{width=\textwidth}	
		\begin{tabular}{l|lllll}
			\toprule
			& $2 \times 2$ 
			& $3 \times 3$
			& $4 \times 4$
			& $1 \times 4$
			& $4 \times 1$
			\\
			&\multicolumn{5}{c}{\texttt{ONLY\_USE\_GREEDY\_STEP = F}} \\
			\midrule
			&\multicolumn{5}{c}{\texttt{AD}} \\
			Running time [h]
			& 72 (limit) % 2x2
			& 72 (limit) % 3x3   
			& 72 (limit) % 4x4
			& 72 (limit) % 3x3   
			& 72 (limit) % 4x1
			\\
			Time to final full domain obj.\ [h]
			& 8.00
			& 14.64
			& 10.58
			& 2.60
			& 7.69
			\\						
			Completed iterations 
			& 17
			& 10
			& 9
			& 14
			& 9
			\\
			Objective achieved
			& \adjustbox{bgcolor=pink}{$6.74 \times 10^{-1}$}
			& \adjustbox{bgcolor=pink}{$6.74 \times 10^{-1}$}
			& \adjustbox{bgcolor=pink}{$6.74 \times 10^{-1}$}
			& \adjustbox{bgcolor=pink}{$6.74 \times 10^{-1}$}
			& \adjustbox{bgcolor=pink}{$6.74 \times 10^{-1}$}
			\\			
			\midrule
			&\multicolumn{5}{c}{\texttt{CHOUPI}} \\
			Running time [h] 
			& 72 (limit) % 2x2
			& 72 (limit) % 3x3   
			& 72 (limit) % 4x4
			& 72 (limit) % 1x4
			& 72 (limit) % 4x1
			\\
			Time to final full domain obj.\ [h] 
			& $68.92$ % 2x2
			& $61.05$ % 3x3   
			& $54.20$ % 4x4
			& $32.61$ % 1x4
			& $31.69$ % 4x1
			\\			
			Completed iterations 
			& 33 % 2x2
			& 28 % 3x3
			& 24 % 4x4
			& 34 % 1x4
			& 34 % 4x1
			\\
			Objective achieved
			& $3.05 \times 10^{-3}$ % 2x2
			& $3.05 \times 10^{-3}$ % 3x3
			& $3.04 \times 10^{-3}$ % 4x4
			& \adjustbox{bgcolor=pink}{$3.03 \times 10^{-3}$} % 1x4
			& \adjustbox{bgcolor=pink}{$3.03 \times 10^{-3}$} % 1x4
			\\
			\midrule			
			&\multicolumn{5}{c}{\texttt{EXACT}} \\
			Running time [h] 
			& 72 (limit) % 2x2
			& 72 (limit) % 3x3
			& 72 (limit) % 4x4
			& 72 (limit) % 1x4
			& 72 (limit) % 4x1
			\\
			Time to final full domain obj.\ [h] 
			& $16.76$ % 2x2
			& $10.58$ % 3x3
			& not achieved
			& $5.12$  % 1x4
			& $27.79$ % 4x1
			\\			
			Completed iterations 
			& 16 % 2x2
			& 13 % 3x3
			& 12 % 4x4
			& 19 % 1x4
			& 17 % 4x1
			\\
			Objective achieved
			& $3.11 \times 10^{-3}$ % 2x2
			& \adjustbox{bgcolor=pink}{$3.10 \times 10^{-3}$} % 3x3
			& $3.13 \times 10^{-3}$ % 4x4
			& $3.11 \times 10^{-3}$ % 1x4
			& $3.11 \times 10^{-3}$ % 4x1
			\\
			\midrule
			&\multicolumn{5}{c}{\texttt{HELMHOLTZ}} \\
			Running time [h] 
			& $16.44$ % 2x2
			& $18.41$ % 3x3
			& $34.47$ % 4x4
			& $33.56$ % 1x4
			& $21.29$ % 4x1
			\\
			Time to final full domain obj.\ [h] 
			& $11.74$      % 2x2
			& not achieved % 3x3
			& not achieved % 4x4
			& $16.30$      % 1x4
			& $9.97$       % 4x1
			\\				
			Completed iterations 
			& 61 % 2x2
			& 45 % 3x3
			& 49 % 4x4
			& 95 % 1x4
			& 65 % 4x4
			\\
			Objective achieved
			& \adjustbox{bgcolor=pink}{$2.25 \times 10^{-2}$} % 2x2
			& \adjustbox{bgcolor=pink}{$2.25 \times 10^{-2}$} % 3x3
			& $2.26 \times 10^{-2}$ % 4x4
			& \adjustbox{bgcolor=pink}{$2.25 \times 10^{-2}$} % 1x4
			& \adjustbox{bgcolor=pink}{$2.25 \times 10^{-2}$} % 4x1
			\\
			\bottomrule 
			\toprule
			& $2 \times 2$ 
			& $3 \times 3$
			& $4 \times 4$
			& $1 \times 4$
			& $4 \times 1$
			\\
			&\multicolumn{5}{c}{\texttt{ONLY\_USE\_GREEDY\_STEP = T}} \\
			\midrule
			&\multicolumn{5}{c}{\texttt{AD}} \\
			Running time [h]
			& 72 (limit) % 2x2
			& 72 (limit) % 3x3   
			& 51.71 % 4x4
			& 72 (limit) % 1x4
			& 72 (limit) % 4x1
			\\
			Time to final full domain obj.\ [h] 
			& 6.26 % 2x2
			& 24.00 % 3x3
			& 12.79
			& 4.49 % 1x4
			& 13.97 % 4x1
			\\		
			Completed iterations 	
			& 19
			& 17
			& 18
			& 18
			& 18
			\\
			Objective achieved
			& \adjustbox{bgcolor=pink}{$6.74 \times 10^{-1}$}
			& \adjustbox{bgcolor=pink}{$6.74 \times 10^{-1}$}
			& \adjustbox{bgcolor=pink}{$6.74 \times 10^{-1}$}
			& \adjustbox{bgcolor=pink}{$6.74 \times 10^{-1}$}
			& \adjustbox{bgcolor=pink}{$6.74 \times 10^{-1}$}
			\\			
			\midrule
			&\multicolumn{5}{c}{\texttt{CHOUPI}} \\
			Running time [h] 
			& 72 (limit) % 2x2
			& 72 (limit) % 3x3   
			& 72 (limit) % 4x4
			& 72 (limit) % 1x4
			& 72 (limit) % 4x1
			\\
			Time to final full domain obj.\ [h] 
			& not achieved % 2x2
			& not achieved % 3x3   
			& not achieved % 4x4
			& not achieved % 1x4
			& 68.88 % 4x1
			\\			
			Completed iterations 
			& 48 % 2x2
			& 48 % 3x3
			& 52 % 4x4
			& 44 % 1x4
			& 47% 4x1
			\\
			Objective achieved
			& $3.05 \times 10^{-3}$
			& $3.05 \times 10^{-3}$
			& $3.05 \times 10^{-3}$
			& $3.07 \times 10^{-3}$
			& $3.05 \times 10^{-3}$
			\\
			\midrule			
			&\multicolumn{5}{c}{\texttt{EXACT}} \\
			Running time [h] 
			& 72 (limit) % 2x2
			& 72 (limit) % 3x3
			& 72 (limit) % 4x4
			& 72 (limit) % 1x4
			& 72 (limit) % 4x1
			\\
			Time to final full domain obj.\ [h] 
			& 56.03 % 2x2
			& 14.86 % 3x3
			& 19.03 % 4x4
			& not achieved % 1x4
			& not achieved % 4x1
			\\			
			Completed iterations 
			& 38 % 2x2
			& 40 % 3x3
			& 40 % 4x4
			& 25 % 1x4
			& 23 % 4x1
			\\
			Objective achieved
			& $3.11 \times 10^{-3}$
			& \adjustbox{bgcolor=pink}{$3.10 \times 10^{-3}$} % 3x3
			& $3.11 \times 10^{-3}$ % 4x4
			& $3.12 \times 10^{-3}$ % 1x4
			& $3.13 \times 10^{-3}$ % 4x1
			\\
			\midrule	
			&\multicolumn{5}{c}{\texttt{HELMHOLTZ}} \\
			Running time [h] 
			& 16.00 % 2x2
			& 31.67 % 3x3
			& 72 (limit) % 4x4
			& 25.90 % 1x4
			& 26.61 % 4x1
			\\
			Time to final full domain obj.\ [h] 
			& not achieved % 2x2
			& not achieved % 3x3
			& not achieved % 4x4
			& not achieved % 1x4
			& 13.75 % 4x1
			\\				
			Completed iterations 
			& 72 % 2x2
			& 107 % 3x3
			& 115 % 4x4
			& 94 % 1x4
			& 96 % 4x1
			\\
			Objective achieved
			& \adjustbox{bgcolor=pink}{$2.25 \times 10^{-2}$} % 2x2
			& \adjustbox{bgcolor=pink}{$2.25 \times 10^{-2}$} % 3x3
			& $2.26 \times 10^{-2}$
			& \adjustbox{bgcolor=pink}{$2.25 \times 10^{-2}$} % 1x4
			& \adjustbox{bgcolor=pink}{$2.25 \times 10^{-2}$} % 4x1
			\\								
			\bottomrule
		\end{tabular}
	\end{adjustbox}
\end{table}

\section{Randomized Patch SLIP}\label{sec:randomized_patch_slip}
In unconstrained optimization in $\R^n$, coordinate-descent methods are 
generally outperformed with respect to computational efficiency
by their randomized counterparts; the reasoning therein is that the latter
do not require the expensive greedy selection process for convergence 
guarantees. This motivates us to employ a trust-region algorithm
for solving \eqref{eq:p} that randomly selects patches from an open
cover of $\Omega$ and then solves
the trust-region subproblem on the selected
patch. While the convergence analysis is more involved in our setting, we
prove essentially the same asymptotics for our problem setting in 
\cref{sec:algorithm_analysis} below as one would prove for an
unconstrained problem in Euclidean space.

\subsection{Randomized Patch SLIP}
Our randomized SLIP variant proceeds
by using the current iterate $w^n \in \BVW(\Omega)$ to compute selection probabilities $p^n_D$ for $D \in \calD$, where $\calD$
is the open cover from \cref{ass:calD_open_cover}. Then a patch $D_n \in \calD$
is selected at random according to the just-computed probabilities.
Next, the trust-region subproblem
\eqref{eq:trp} (see \cite{baraldi2025domain,manns2023on})
is solved with the arguments $\bar{w} = w^n$, $g = \nabla F(w^n)$, $D = D_n$,
and $\Delta = \Delta_0 2^{-k}$ for increasing integers $k = 0, 1, \ldots$.

The inner iteration over increasing $k = 0,1,\ldots$ proceeds until one of two stopping criteria is met.
Either, the computed trial point can be accepted if the reduction of
the objective of \eqref{eq:p} $\ared{}^{n,k,D_n}$ that is realized by the
solution $\tilde{w}^{n,k,D_n}$ to \eqref{eq:trp} is at least a fraction of 
the predicted reduction $\pred{}^{n,k,D_n}$, and, in addition, $\pred{}^{n,k,D_n}$ exceeds
the threshold $\kappa_n$. The second stopping criterion
of the inner loop is that the predicted reduction 
$\pred{}^{n,k,D_n}$ falls below the threshold $\kappa_n$.
The algorithm is stated in pseudocode in \ref{alg:randomized_patch_slip}.
\begin{remark}
We highlight that \ref{alg:randomized_patch_slip} is a more simplistic algorithm than
\ref{alg:patch_slip}. A lot of the steps in \ref{alg:patch_slip} are made
to determine the greedy patch, which is necessary for our convergence proof
in \cite{baraldi2025domain}. Meanwhile the nature of \ref{alg:randomized_patch_slip}
simply requires updates in the randomly selected block, thereby forgoing the 
greedy update. 
\end{remark}

\begin{algorithm}[htb]
	\def\thealgorithm{Randomized-Patch-SLIP}
	\refstepcounter{algorithm}
	\caption*{\textbf{Algorithm Randomized-Patch-SLIP: Sequential linear integer programming method
			with random patch updates}}
	\label{alg:randomized_patch_slip}
	
	\textbf{Input:} $\kappa_0 > 0$, $\Delta_0 > 0$, $w^0 \in \BVW(\Omega)$, $\sigma \in (0,1)$, set of patches $\calD$.
	\begin{algorithmic}[1]
		\For{$n = 0,1,2,\ldots$}
		\State Assign selection probabilities $p_D^n$ for all $D \in \calD$ based on $w^n$.\label{ln:selection_probabilities}
		\State Pick $D_n \in \calD$ at random based on the probabilities $p_D^n$.\label{ln:select_patch}
		\For{$k = 0,1,\ldots$}\label{ln:tr_reduction}
		\State $\tilde{w}^{n,k,D} \gets$
		minimizer of $\operatorname{\ref{eq:trp}}(w^{n}, \nabla F(w^{n}), D_n, \Delta_{0} 2^{-k})$.\label{ln:tr_step}
		\State $\pred^{n,k,D_n} \gets (\nabla F(w^{n}), w^{n} - \tilde{w}^{n,k,D_n})_{L^2}
		+ \alpha \TV(w^{n}) - \alpha \TV(\tilde{w}^{k,n,D_n})$\label{ln:pred}
		\State $\ared^{n,k,D_n} \gets F(w^{n}) + \alpha \TV(w^{n})
		- F(\tilde{w}^{n,k,D_n}) - \alpha\TV(\tilde{w}^{n,k,D_n})$
		\If{$\ared^{n,k,D_n} \ge \sigma \pred^{n,k,D_n}$ and $\pred^{n,k,D_n} > \kappa_n$}\label{ln:psuff_dec}
		\State $w^{n+1} \gets \tilde{w}^{n,k,D_n}$
		\State $\kappa_{n+1} \gets \kappa_n$
		\State \textbf{go to} Line \ref{ln:selection_probabilities}.
		\ElsIf{$\pred^{n,k,D_n} \le \kappa_n$}\label{ln:reduce_kappa}
		\State $\kappa_{n+1} \gets 2^{-1}\kappa_n$
		\State \textbf{go to} Line \ref{ln:selection_probabilities}.		
		\EndIf
		\EndFor
		\EndFor
	\end{algorithmic}
\end{algorithm}
As the previous algorithms, \ref{alg:randomized_patch_slip} resets the
trust-region radius after a successful iterate, which is required
for the convergence analysis if $d \ge 2$ therein. We point the reader to \cite{friedemann2025trust,manns2024convergence}
for an analysis that is tied to $d = 1$ but allows to avoid the reset. The reset makes the algorithm closer to backtracking linesearch,
which is common for randomized coordinate descent methods.
Trust-region methods seldom appear in the literature; see \cite{wang2016randomized}
for an exception. In contrast to \ref{alg:slip}, we find it not intuitive whether a typical trust-region update strategy
is beneficial or not. Specifically, we are not aware of any arguments 
concerning why step acceptance on the
current coordinate block (or patch in our case) should make it likely that
the updated model performs well on another coordinate block (or patch) for a similarly-sized trust region.

Usually, optimization algorithms terminate when a non-negative function that can
be easily evaluated, is continuous, and measures instationarity (also called gap
function or criticality measure) \cite{conn1993global,larsson1994class} 
is zero or close to zero. In the unconstrained case, this is usually the norm of the
gradient of the objective. Similarly, in a block coordinate descent method, an update of a block of coordinates
may not be performed if the corresponding entries of the gradient are zero. Such a function
is not known for \eqref{eq:p} so far, however. Since it cannot be excluded that the
predicted reduction is strictly positive for all positive trust-region radii for a
patch on which \eqref{eq:stationarity} is satisfied, we need another means to terminate
the inner loop in \ref{alg:randomized_patch_slip}. We do so by means of the threshold $\kappa_n$.
If the predicted reduction for an inner iteration $k$ falls below $\kappa_n$, the inner loop
is terminated, $\kappa_n$ is halved, and the patch selection is begins anew.
This implies that the inner loop always terminates.

\begin{remark}
	For implementations of \ref{alg:randomized_patch_slip}, where the space $\BVW(\Omega)$ is
	discretized using piecewise constant functions, one does not need to implement
	the aforementioned mechanism using $\kappa_n$ but one can just stop the inner loop if the
	trust-region falls below the smallest $d$-dimensional volume of all grid cells because in this
	case, the integrality restriction precludes the existence of feasible trial points other than
	the current iterate. Clearly, $p_{n+1}^{D_n}$ can be set to zero when this happens and
	the whole algorithm can be terminated if this contraction happens for all patches
	while $w_n$ remains unchanged.
\end{remark}

\subsection{Statement and Discussion
	of Convergence Result}
A key ingredient in the algorithm is the computation of the patch selection probabilities
in \ref{alg:randomized_patch_slip} ll.\ \ref{ln:selection_probabilities},\ref{ln:select_patch}. We make the
standing \cref{ass:selection_probabilities}
below on the probabilities for patch selections, which will turn out
to be sufficient to achieve a convergence result that parallels the usual \emph{convergence in
	expectation} for block coordinate descent methods.

\begin{remark}
	The typical convergence result in randomized coordinate
	descent methods is that the iterates converge to stationary points in expectation; see, e.g., \cite{wright2015coordinate}.
	For the simple case of the smooth, unconstrained optimization of
	$f : \R^d \to \R$, this amounts
	to proving $\E \|\nabla f(X^n)\| \to 0$ as $n \to \infty$ if the $X^n$
	are the random variables for the algorithm iterates. As a consequence, $\nabla f(x) = 0$ holds for any limit point $x$ of any realization
	of $\{X^n\}_n$ with probability one, that is, every limit point is stationary with probability one.
	This argument is not directly applicable here since we do not have a surrogate for
	the function $x \mapsto \|\nabla f(x)\|$ for $d \ge 2$. Nevertheless, every subsequence of iterates is bounded by the problem setting and algorithm design and admits weak-$^*$ cluster points in $\BV(\Omega)$.
	We therefore analyze the probability of the event that there is a weak-$^*$ cluster point of iterates that is
	not stationary and show that it is indeed zero so that our result can be interpreted as convergence to stationary points
	almost surely or convergence in expectation.
\end{remark}

\begin{assumption}\label{ass:selection_probabilities}
	We assume that the probability assignment in \ref{alg:randomized_patch_slip} l.\ \ref{ln:selection_probabilities} is determined
	functionally on $w^n$ such that
	\begin{enumerate}
		\item $p_D^n \ge 0$ for all $D \in \calD$ and $\sum_{D \in \calD} p_D^n = 1$.
		\item there exist $p_0 > 0$ and $\kappa_n \searrow 0$ such that for all $n \in \N$ and $D \in \calD$, 
		$p_0 \le p_D^n$ and if
		\[ w^n\text{ is not patch-stationary on }D\quad\text{or}\quad \pred{}^{n,0,D} > \kappa_n.\]
	\end{enumerate}
\end{assumption}
\Cref{ass:selection_probabilities} covers the often-implemented special case of a uniform probability for all patches for all iterations.
\begin{lemma}\label{lem:iid}
	Let the probabilities assigned in \ref{alg:randomized_patch_slip} l.\ \ref{ln:selection_probabilities}
	satisfy $p_D^n = p_D = \frac{1}{|\calD|}$ for all iterations $n \in \N$.
	Then \cref{ass:selection_probabilities} holds.
\end{lemma}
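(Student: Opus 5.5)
The lemma only asks us to check the two items of \cref{ass:selection_probabilities} for the constant choice $p_D^n = \frac{1}{|\calD|}$, so the plan is a direct verification of each item.

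For the first item, \cref{ass:calD_open_cover} says that $\calD$ is a finite cover of $\Omega$. Since $\Omega \neq \emptyset$, $\calD$ is also nonempty. Hence $1 \le |\calD| < \infty$, so $p_D^n = 1/|\calD|$ is a well-defined nonnegative number. Summing over the $|\calD|$ patches gives $\sum_{D \in \calD} p_D^n = |\calD| \cdot \frac{1}{|\calD|} = 1$. The assignment also does not depend on $w^n$, and constant maps trivially count as being determined functionally on $w^n$.

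For the second item, we take $p_0 \coloneqq \frac{1}{|\calD|} > 0$. We take $\kappa_n$ to be the threshold sequence produced by \ref{alg:randomized_patch_slip}, or, if the assumption is read as quantifying over an arbitrary sequence, any sequence with $\kappa_n \searrow 0$ such as $\kappa_n = \kappa_0 2^{-n}$. Then $p_D^n = p_0 \ge p_0$ holds for every $n \in \N$ and every $D \in \calD$. In particular it holds on the subset of pairs $(n,D)$ for which $w^n$ is not patch-stationary on $D$ or $\pred^{n,0,D} > \kappa_n$. So the lower bound required by the assumption is satisfied regardless of how the conditional clause is interpreted, and the choice of $\kappa_n$ plays no role in the inequality.

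The only step that needs any care is the logical reading of the second item: which sequence $\kappa_n$ is meant, and whether the lower bound is required unconditionally or only under the stated condition. Since the uniform probabilities satisfy the bound for all $n$ and $D$ and every admissible choice of $\kappa_n$, every reading is covered, and no property of $F$ or of the iterates is needed.
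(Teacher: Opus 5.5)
Your direct verification is correct and matches the paper, whose entire proof is ``This is immediate.'' Taking $p_0 = 1/|\calD|$ gives $p_D^n \ge p_0$ for every $n$ and $D$, whatever the choice of $\kappa_n$ and however the conditional clause is read, and this is exactly the content behind that remark.
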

\begin{proof}
	This is immediate.
\end{proof}
\begin{remark}
	\Cref{ass:selection_probabilities} gives flexibility over the situation of \cref{lem:iid}
	for heuristic choices that allow to adapt the probabilities to the current situation, for example, because
	good progress on a patch is often followed by good progress on the same patch in the next iteration.
	Similarly, a patch for which a priori knowledge is available that it cannot improve the objective
	much can be excluded from consideration by setting $p_{n}^D$ to zero or a value close to zero.
\end{remark}

We now state our main convergence
result and defer the proof to the appendix in
\cref{sec:algorithm_analysis}.

\begin{restatable}{theorem}{mainthm}\label{thm:main_convergence_theorem}
	Let $(X, \Sigma_P, P)$ be the probability space constructed in
	\cref{sec:probability_space}.
	Let \cref{ass:calD_open_cover,ass:standing,ass:selection_probabilities} hold.
	Let
	\[ N \coloneqq 
	\{ x \in X\,:\,
	\exists (n_\ell)_\ell \subset (n)_n : W_{n_\ell}(x) \weakstarto \bar{w} 
	\text{ in } \BVW(\Omega)
	\text{ for some instationary } \bar{w}
	\}.
	\]
	Then $N \in \Sigma_P$ and $P(N) = 0$.
\end{restatable}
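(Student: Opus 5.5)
Measurability of $N$ comes first. The iterates $W_n$ are random variables with values in $\BVW(\Omega)$, and they are measurable with respect to the filtration $\mathcal{F}_n$ generated by the patch choices $D_0,\dots,D_{n-1}$. The key fact is that the iterates are uniformly bounded in $\BV(\Omega)$: the objective values $J(W_n)$ are nonincreasing, $F$ is bounded below, and $W_n$ takes values in the finite set $W$. Hence $\alpha\TV(W_n) \le J(w^0) - \inf F$. On this bounded set the weak-$^*$ topology is metrizable; equivalently, we may work with strong $L^1$ convergence together with the $\TV$ bound. So $N$ can be written with countable unions and intersections: for all $m$ there exist $n\ge m$ with $W_n$ in a ball around an instationary point. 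To do this, I would write instationarity as the existence of a test field $\phi$ from a countable dense subset of $C_c^\infty(\Omega,\R^d)$ for which the left-hand side of \eqref{eq:stationarity} is bounded away from zero. This makes $N$ a countable Boolean combination of events $\{W_n\in B\}$ with $B$ Borel, so $N \in \Sigma_P$.

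For $P(N)=0$, the plan mirrors coordinate descent. First, the deterministic lemmas from \cite{baraldi2025domain} (the analogues of Lemma 5.3 there) carry over unchanged, because they involve only one patch at a time. Suppose $w$ is not patch-stationary on $D$. Then there is a neighborhood of $w$ (in $L^1$, intersected with the $\TV$-bounded set) and constants $\eta>0$, $\bar\Delta>0$ such that, for every iterate $w^n$ in this neighborhood, the predicted reduction on $D$ at radius $\Delta\le\bar\Delta$ satisfies $\pred \ge \eta\Delta$. In addition, $\ared\ge\sigma\pred$ holds once $\Delta$ is small; this uses the Lipschitz bound $|\ared-\pred|\le L_{\nabla F}\Delta^2$ from \cref{ass:standing}. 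Combining the two facts, whenever $D_n = D$ and $w^n$ lies in that neighborhood, the inner loop accepts a step at a radius bounded below uniformly. The step is therefore accepted with $\ared \ge \sigma \eta \Delta_{\min} =: c>0$, provided $\kappa_n < c$. This proviso holds eventually because $\kappa_n\searrow 0$ by construction, since each rejection halves it. (If only finitely many halvings occur, then infinitely many steps are accepted with $\ared\ge\sigma\kappa_\infty$, contradicting boundedness from below. So either way the argument goes through.)

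The probabilistic step combines this with \cref{ass:selection_probabilities}. By the patch-stationarity equivalence theorem, an instationary cluster point $\bar w$ is not patch-stationary on some $D\in\calD$. By item 2 of \cref{ass:selection_probabilities}, $p^n_D\ge p_0$ whenever $w^n$ is close to $\bar w$. Fix a countable family of "instationarity witnesses" (neighborhood, patch $D$, constant $c$). On the event that $W_{n}$ enters such a neighborhood infinitely often, the conditional probability given $\mathcal{F}_n$ that $D_n=D$ is at least $p_0$ at each such visit. By the conditional (Lévy) Borel–Cantelli lemma, a.s. the patch $D$ is selected infinitely often at such visits. Each such selection decreases $J$ by at least $c$, and this contradicts $J\ge\inf F$. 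Hence each witness event has probability zero, and $N$, being a countable union of such events, is null.

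The main obstacle is the uniformity step: extracting a neighborhood of $\bar w$ on which the predicted reduction is uniformly positive. This is where weak-$^*$ convergence, the $\max\{\cdot\}$-free $\TV$, and the local variation construction of \cite{manns2023on} enter. In particular, it requires lower semicontinuity of $\TV$ combined with strict convergence arguments, as in Theorem 5.4 of \cite{baraldi2025domain}. I would first try to import that proof verbatim, applied to a subsequence along which $D$ is chosen. Care is needed, because those arguments use $\TV(W_{n_\ell})\to\TV(\bar w)$, which must itself be extracted from the monotone-decrease bookkeeping.
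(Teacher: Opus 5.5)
Your probabilistic step is sound. You use deterministic witness sets and the conditional Borel--Cantelli lemma, with $p^n_D$ as the conditional selection probability given the past. This is, if anything, more careful than the paper's computation, which works with an $x$-dependent subsequence $(n_\ell)$.

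The genuine gap is the deterministic input you feed into that step. You assert that if $\bar w$ is not patch-stationary on $D$, then on an $L^1$-neighborhood of $\bar w$ intersected with a $\TV$-ball (i.e.\ a weak-$^*$ neighborhood) one has $\pred\ge\eta\Delta$ uniformly. This does not follow from Lemma 5.3 of \cite{baraldi2025domain}, which the paper invokes only along \emph{strictly} convergent subsequences, and for good reason. The curvature (tangential-divergence) term in \eqref{eq:stationarity} is continuous under strict convergence but not under weak-$^*$ convergence. So instationarity of $\bar w$ says nothing about iterates that are merely $L^1$-close to it with bounded $\TV$. Your proposed remedy, extracting $\TV(W_{n_\ell})\to\TV(\bar w)$ from monotone decrease, cannot work. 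Monotonicity only gives $J(W_n)\searrow J^*$ and hence $\alpha\TV(W_{n_\ell})\to J^*-F(\bar w)\ge\alpha\TV(\bar w)$, and nothing in that bookkeeping excludes strict inequality.

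The missing idea is a separate argument that non-strict weak-$^*$ cluster points occur with probability zero. The paper isolates this in \cref{lem:strict_convergence}, before Lemma 5.3 is ever applied. If $\lim_\ell\TV(W_{n_\ell})>\TV(\bar w)$, Lemma 5.7 of \cite{baraldi2025domain} provides a patch $D$, an index $k_0$ and $p>0$ such that, for all large $\ell$,
\[
\ared^{n_\ell,k_0,D}\ge\sigma\pred^{n_\ell,k_0,D}
\quad\text{and}\quad
\pred^{n_\ell,k_0,D}>p .
\]
Here the decrease is driven by the $\TV$-excess itself rather than by a first variation at $\bar w$. Since eventually $\pred^{n_\ell,0,D}\ge\pred^{n_\ell,k_0,D}>p>\kappa_{n_\ell}$, \cref{ass:selection_probabilities} yields $p^{n_\ell}_D\ge p_0$. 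Note that it is this disjunct, and not mere proximity to $\bar w$, that triggers the lower bound; this also applies to your argument. The selection argument then makes this event null. Only afterwards are you in the strict regime where Lemma 5.3 supplies uniform constants.

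In your framework, this means the witness neighborhoods must be taken in the strict topology, which is separable and metrizable via $w\mapsto(w,\TV(w))\in L^1(\Omega)\times\R$. They must also be supplemented by a countable family of $\TV$-excess witnesses.

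The same weak-$^*$ discontinuity undermines your direct measurability argument. The sets where the left-hand side of \eqref{eq:stationarity} is bounded away from zero are not $L^1$-closed, so ``having a cluster point there'' is not a countable Boolean combination of events $\{W_n\in B\}$. That argument is unnecessary anyway. As in the paper, completeness of $(X,\Sigma_P,P)$ gives $N\in\Sigma_P$ once $N$ is covered by countably many measurable null sets.
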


%
%\begin{remark}\label{rem:trp_stationary_points}
%	Due to the exact consideration of the term $\TV(w)$,
%	it can happen that $\pred^{n,k,D_n} > 0$ holds and $\tilde{w}^{n,k,D_n}$ is accepted
%	when $w^n$ is stationary on $D_n$. In this case, the algorithm improves over (suboptimal)
%	locally stationary point $w^n$ and proceeds towards a (better) stationary point. On the other
%	hand, if $\pred^{n,k,D_n} = 0$ holds, then $w^n$ solves
%	$\operatorname{\ref{eq:trp}}(w^n, \nabla F(w^n), D_n, \Delta_0 2^{-k})$ and is in turn
%	stationary on $D$; see Proposition 5.5 in \cite{manns2023on} (the proof directly carries over
%	to the restriction to $D$ by replacing $\phi \in C_c^\infty(\Omega; \R^n)$ therein by
%	$\phi \in C_c^\infty(D; \R^n)$. Clearly, we also cannot exclude that $w^n$ is stationary
%	on $D$ but the acceptance criterion $\ared^{n,k,D_n} \ge \sigma \pred^{n,k,D_n}$
%	is never satisifed. See Corollary 6.3 in \cite{manns2023on} for the analogous observation
%	for SLIP.
%\end{remark}

\subsection{Numerical Assessment}
We now assess the performance of \ref{alg:randomized_patch_slip} in 
comparison to \ref{alg:slip} and \ref{alg:patch_slip}. Again, we activate 
early termination, the lazy constraint mechanism, and the cutting
planes for all four test cases and the relax-\&-round initalization
for all test cases except \texttt{EXACT}. Regarding
the patch layout for our test cases, we use the best performing ones from the
previous section, that is, $1 \times 4$ for \texttt{AD}, $4 \times 1$ for
\texttt{CHOUPI}, $3 \times 3$ for \texttt{EXACT}, and $4 \times 1$ for
\texttt{HELMHOLTZ}. Since the inclusion of objective-decreasing
steps (flag \texttt{ONLY\_USE\_GREEDY\_STEP = F}) over the
greedy step in one iteration lead to a substantial improvement of the 
performance of \ref{alg:patch_slip}, we use this variant
for comparison here. Again, we prescribe a time limit of $72$ hours.

We note that, as we will see below, the differences between the achieved
objective values for \ref{alg:patch_slip} and \ref{alg:randomized_patch_slip} are very small
so that we have decided to report them up to 5 digits of precision for this test case but advice caution
in putting too much importance on the differences.

\paragraph{Benchmark \texttt{AD}} The assessed algorithm variants have 
converged to almost the same objective value and iterate
within the time limit; see also \Cref{fig:ad_sol_cmp}.
At the end, a computationally very expensive subproblem is generated that cannot
be solved within the given time limit. Again, we note that this often happens when the fractional 
component of the LP relaxation coincides with the whole domain or the whole patch, as
in the discussion  \cref{sec:slip_test_config_2}.
In this situation, the randomized \ref{alg:randomized_patch_slip} may be
deemed as performing best because the final iteration is achieved after in less
than half of the time ($14.60$ hours) compared to \ref{alg:slip} ($32.18$ hours) and \ref{alg:patch_slip} ($46.38$ hours). We note that, as we have observed in the test before, the last iterations only alter the objective very little and
if one considers the time until the final objective value is achieved
up to a precision of $10^{-6}$, we obtain $24.13$ hours for 
\ref{alg:slip}, $10.76$ hours for \ref{alg:patch_slip},
and $7.21$ hours \ref{alg:randomized_patch_slip},
which is consistent with deeming \ref{alg:randomized_patch_slip} the winner for this test case.
\begin{figure}[ht]
	\centering
	\begin{subfigure}[b]{0.31\textwidth}
		\centering 
		{%
			\setlength{\fboxsep}{0pt}%
			\setlength{\fboxrule}{1pt}%
			\fbox{\includegraphics[width=\textwidth]{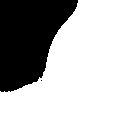}}
		}%
		\caption{{\small \ref{alg:slip}}}
	\end{subfigure}
	\hfill
	\begin{subfigure}[b]{0.31\textwidth}  
		\centering 
		{%
			\setlength{\fboxsep}{0pt}%
			\setlength{\fboxrule}{1pt}%
			\fbox{\includegraphics[width=\textwidth]{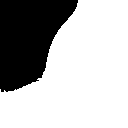}}
		}%
		\caption{{\small \ref{alg:patch_slip}}}
	\end{subfigure}
	\hfill
	\begin{subfigure}[b]{0.31\textwidth}  
		\centering 
		{%
			\setlength{\fboxsep}{0pt}%
			\setlength{\fboxrule}{1pt}%
			\fbox{\includegraphics[width=\textwidth]{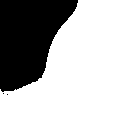}}
		}%
		\caption{{\small \ref{alg:randomized_patch_slip}}}
	\end{subfigure}
	\caption{Visualization of final iterate of
		\ref{alg:slip},
		\ref{alg:patch_slip}, and
		\ref{alg:randomized_patch_slip} for \texttt{AD}
		(black = $0$, white = $1$).}
	\label{fig:ad_sol_cmp}
\end{figure}

\paragraph{Benchmark \texttt{CHOUPI}} We observe that the differences 
between the time to the last accepted iterate and the prescribed time 
limit are comparatively small (all of them less than two hours), which 
indicates that the optimization was not effectively completed on 
termination. We observe that \ref{alg:patch_slip} achieves a lower 
objective value \ref{alg:slip} and \ref{alg:randomized_patch_slip}
within the prescribed time limit. Specifically, the initial iterate
(from the relaxation-\&-round initialization) has an objective
value of $3.4391 \times 10^{-3}$, which was decreased to
$3.0268 \times 10^{-3}$ by \ref{alg:patch_slip} in comparison to
$3.0481 \times 10^{-3}$ ($5.2$\,\% lower) by \ref{alg:slip} and
$3.0288 \times 10^{-3}$ ($0.49$\,\% lower) for
\ref{alg:randomized_patch_slip}. The achieved objective values over
time are plotted in \Cref{fig:choupi_performance_plot},
where we have excluded the time for the
relax-\&-round initialization because its time consumption and
objective reduction is the same for all algorithms. Since the
relative difference between
the objective values for \ref{alg:patch_slip} and 
\ref{alg:randomized_patch_slip} is relatively small, there is no
clear winner here but we find it fair to conclude that these two
algorithms slightly outperform \ref{alg:slip}.
\begin{figure}	
	\begin{tikzpicture}
	\begin{axis}[width=\textwidth,height=7cm,
	xlabel={Compute time},
	ylabel={Objective},
	grid=both,
	legend style={at={(0.98,0.98)},anchor=north east},
	]
	
	\addplot[
	solid,
	mark=*,
	mark size=2.5pt,
	]
	table {data/choupi11111_pp.txt};
	\addlegendentry{\ref{alg:slip}}
	
	\addplot[
	dashed,
	mark=square*,
	mark size=2.5pt,
	]
	table {data/choupi11111_41_pp.txt};
	\addlegendentry{\ref{alg:patch_slip}}
	
	\addplot[
	dotted,
	mark=triangle*,
	mark size=2.5pt,
	]
	table {data/choupi11111_41_r_pp.txt};
	\addlegendentry{\ref{alg:randomized_patch_slip}}
	
	\end{axis}
	\end{tikzpicture}
	\caption{Objective value over time for benchmark 
		\texttt{CHOUPI} for \ref{alg:slip}, \ref{alg:patch_slip}, and
		\ref{alg:randomized_patch_slip}.}\label{fig:choupi_performance_plot}
\end{figure}

\paragraph{Benchmark \texttt{EXACT}} At first glance, the situation is 
similar to the testcase \texttt{AD} in that for all three assessed 
algorithms, a subproblem is generated that has a very long compute time 
and no further progress is made. This observation is not as pronounced 
\ref{alg:randomized_patch_slip} as for the other algorithms because
the last iteration has \emph{only} consumed a little less than
10 hours so far so that one might argue that it has not yet completed. 
However, the remaining duality gap of the last generated instance
of \eqref{eq:p} is only decreasing at a very slow pace and with
remaining (relative) duality gap of $0.43\,\%$ still sitting
well above the termination threshold of $0.01\,\%$. Considering
the evolution of the objective over time, one can observe that
\ref{alg:slip} decreases the objective relatively fast and in particular 
faster than \ref{alg:randomized_patch_slip} until the \emph{intractable}
subproblem is generated. We note that by construction, solving the 
continuous relaxation, we obtain a almost binary-valued solution due
to the construction of \texttt{EXACT}, which after rounding to integers 
yields the objective value $3.1022 \times 10^{-3}$, equally good
as the final iterate produced by \ref{alg:patch_slip}.
Therefore, we deem \ref{alg:patch_slip} the winner in terms of objective 
achieved but highlight that the large, inexpensive steps that can be made
by \ref{alg:slip} by not being restricted to patches clearly give an
edge at the beginning of the algorithm. This is of course mainly
the case here because we do not initialize with relaxation-\&-round
here, which we strongly recommend however given the other results.
\begin{figure}
	\centering
	\begin{subfigure}[b]{0.49\textwidth}
		\centering 
		\begin{tikzpicture}
		\begin{axis}[width=\textwidth,height=8cm,
		xlabel={Compute time},
		ylabel={Objective},
		grid=both,
		legend style={at={(0.98,0.98)},anchor=north east},
		]
		
		\addplot[
		solid,
		mark=*,
		mark size=2.5pt,
		restrict x to domain=0:1,
		]
		table {data/exact11110_pp.txt};
		\addlegendentry{\ref{alg:slip}}
		
		\addplot[
		dashed,
		mark=square*,
		mark size=2.5pt,
		restrict x to domain=0:1,
		]
		table {data/exact11110_33_pp.txt};
		\addlegendentry{\ref{alg:patch_slip}}
		
		\addplot[
		dotted,
		mark=triangle*,
		mark size=2.5pt,
		restrict x to domain=0:1,
		]
		table {data/exact11110_33_r_pp.txt};
		\addlegendentry{\ref{alg:randomized_patch_slip}}
		
		\end{axis}
		\end{tikzpicture}
		\caption{First hour of compute time}
	\end{subfigure}
	\hfill
	\begin{subfigure}[b]{0.49\textwidth}  
		\centering 
		\begin{tikzpicture}
		\begin{axis}[width=\textwidth,height=8cm,
		xlabel={Compute time},
		ylabel={Objective},
		grid=both,
		legend style={at={(0.98,0.98)},anchor=north east},
		xmin=0.5,
		xmax=72
		]
		
		\addplot[
		solid,
		mark=*,
		mark size=2.5pt,
		restrict x to domain=1:72,
		]
		table {data/exact11110_pp.txt};
		\addlegendentry{\ref{alg:slip}}
		
		\addplot[
		dashed,
		mark=square*,
		mark size=2.5pt,
		restrict x to domain=1:72,
		]
		table {data/exact11110_33_pp.txt};
		\addlegendentry{\ref{alg:patch_slip}}
		
		\addplot[
		dotted,
		mark=triangle*,
		mark size=2.5pt,
		restrict x to domain=1:72,
		]
		table {data/exact11110_33_r_pp.txt};
		\addlegendentry{\ref{alg:randomized_patch_slip}}
		
		\end{axis}
		\end{tikzpicture}
		\caption{{\small Further compute time}}
	\end{subfigure}

	\caption{Objective value over time for testcase 
		\texttt{EXACT} for 
		\ref{alg:slip}, \ref{alg:patch_slip}, and
		\ref{alg:randomized_patch_slip}.}\label{fig:exact_performance_plot}
\end{figure}

\paragraph{Benchmark \texttt{HELMHOLTZ}} The assessed algorithms
have completed their computation within the given time limit.
\ref{alg:slip} has the longest runtime of $21.66$ hours
compared to $21.29$ hours for \ref{alg:patch_slip} and
$13.13$ hours for \ref{alg:randomized_patch_slip}.
It also shows the longest time of $21.15$ hours
to reach the final accepted iterate compared to $20.95$ hours for 
\ref{alg:patch_slip} and $12.74$ hours for 
\ref{alg:randomized_patch_slip}. It also shows the highest final
objective  value of $2.2529 \times 10^{-2}$ compared to
$2.2501 \times 10^{-2}$ for \ref{alg:slip} and
$2.2520 \times 10^{-2}$ for \ref{alg:patch_slip}.
Therefore, \ref{alg:slip} can be deemed as showing the worst
performance for this test case. The winner remains
somewhat inconclusive since \ref{alg:patch_slip} obtains
the better objective value but takes $62\,\%$ longer than
\ref{alg:randomized_patch_slip}. Since the remaining relative
objective difference is less than $0.1\%$, we find it fair
to conclude that \ref{alg:randomized_patch_slip} shows
a slightly better performance in this testcase.

We report the recorded running times, time to the final accepted iteration, and
the achieved objective values in \Cref{tbl:test}.
\begin{table}[h]
	\caption{Running time, time to final accepted iterate, and achieved
		objective value for \ref{alg:slip}, \ref{alg:patch_slip},
		and \ref{alg:randomized_patch_slip} in the configuration
		of \cref{sec:randomized_patch_slip}. Winners in terms of
		objective value within the reported precision
		are highlighted with pink color.}
	\label{tbl:test}
	\begin{adjustbox}{width=\textwidth}	
		\begin{tabular}{l|lll}
			\toprule
			& \multicolumn{3}{c}{\texttt{AD}} \\
			& Alg.\ S
			& Alg.\ P ($1\times 4$)
			& Alg.\ R ($1\times 4$)
			\\
			Running time [h]
			& $72$ (limit)
			& $72$ (limit)
			& $72$ (limit)
			\\
			Time to final accepted it.\ [h]
			& $32.18$
			& $46.38$
			& $14.60$
			\\						
			Objective achieved
			& \adjustbox{bgcolor=pink}{$6.7350 \times 10^{-1}$}
			& \adjustbox{bgcolor=pink}{$6.7350 \times 10^{-1}$}
			& \adjustbox{bgcolor=pink}{$6.7350 \times 10^{-1}$}
			\\			
			\midrule
			&\multicolumn{3}{c}{\texttt{CHOUPI}} \\
			& Alg.\ S
			& Alg.\ P ($4\times 1$)
			& Alg.\ R ($4\times 1$)
			\\
			Running time [h]
			& $72$ (limit)
			& $72$ (limit)
			& $72$ (limit)
			\\
			Time to final accepted it.\ [h]
			& $71.49$
			& $70.23$
			& $70.92$
			\\						
			Objective achieved
			& $3.0481 \times 10^{-3}$
			& \adjustbox{bgcolor=pink}{$3.0268 \times 10^{-3}$}
			& $3.0288 \times 10^{-3}$
			\\
			\midrule			
			&\multicolumn{3}{c}{\texttt{EXACT}} \\
			& Alg.\ S
			& Alg.\ P ($3\times 3$)
			& Alg.\ R ($3\times 3$)
			\\
			Running time [h]
			& $72$ (limit)
			& $72$ (limit)
			& $72$ (limit)
			\\
			Time to final accepted it.\ [h]
			& $3.33$
			& $19.51$
			& $62.08$
			\\						
			Objective achieved
			& $3.1196 \times 10^{-3}$
			& \adjustbox{bgcolor=pink}{$3.1022 \times 10^{-3}$}
			& $3.1042 \times 10^{-3}$
			\\			
			\midrule
			&\multicolumn{3}{c}{\texttt{HELMHOLTZ}} \\
			& Alg.\ S
			& Alg.\ P ($4\times 1$)
			& Alg.\ R ($4\times 1$)
			\\
			Running time [h]
			& $21.66$
			& $21.29$
			& $13.13$
			\\
			Time to final accepted it.\ [h]
			& $21.15$
			& $20.95$
			& $12.74$
			\\						
			Objective achieved
			& $2.2529 \times 10^{-2}$
			& \adjustbox{bgcolor=pink}{$2.2501 \times 10^{-2}$}
			& $2.2520 \times 10^{-2}$
			\\
			\bottomrule
		\end{tabular}
	\end{adjustbox}
\end{table}

\section{Conclusion}\label{sec:con}
Regarding the obtained numerical results, we draw several conclusions. Generally,
\ref{alg:slip} performs worse on our benchmark problems than 
\ref{alg:patch_slip} and \ref{alg:randomized_patch_slip}.
However, we
highlight the performance difference is nowhere near the huge
differences observed in \cite{baraldi2025domain}. The difference
between \cite{baraldi2025domain} and our work is that we are now using
the discretization of the $\TV$-seminorm from  
\cite{schiemann2025discretization} that is convergent and compatible
with discrete-valued control functions. If one accepts the 
anisotropic behavior and error of the naive discretization used in
\cite{baraldi2025domain}, the performance gain is higher. We also
note that the patch-based approaches allow for finer
discretizations since one can limit the size of the trust-region 
subproblems by choosing smaller patches; however, this may come at the cost
of slightly worse objective values. Comparing
\ref{alg:patch_slip} and \ref{alg:randomized_patch_slip}, the
performance differences are relatively small and mixed
and it is difficult to determine a universal winner. However, 
\ref{alg:randomized_patch_slip} is much more straightforward to
implement and the patch selection probabilities give an additional
degree of freedom that may turn out to have a substantial performance 
gain.

It seems also sensible us to apply a hybridization of
\ref{alg:slip} and one of the patch-based algorithms. Specifically,
we have observed that the first iterations of \ref{alg:slip} are
computationally cheap and make good progress. 
A hybridization approach capitalizes by later switching to a patch-based algorithm
when the iterations become more costly and the subproblems
of \ref{alg:slip} become intractable.

Regarding the patch layouts, rectangular layouts seem beneficial
in most cases, potentially due to subproblems
resembling to one-dimensional ones, which are generally efficiently
solvable. This has been observed and argued already in 
\cite{severitt2025integer}. How to systematically identify well-performing 
patch layouts and update them over the course of \ref{alg:patch_slip} or 
\ref{alg:randomized_patch_slip} is an important question for future research.
In addition, patch-localized adaptive finite-element methods may reduce
the computational burden further by combining the inexactness ideas from
\cite{manns2023on,antil2026afem}.

\section*{Acknowledgements}
The authors are grateful to Annika Schiemann (Greenplan GmbH, formerly TU 
Dortmund) for providing templates for several parts of the code underlying 
our computational experiments. The authors gratefully acknowledge computing
time on the LiDO3 HPC cluster at TU Dortmund, partially funded in the
Large-Scale Equipment 796 Initiative by the Deutsche Forschungsgemeinschaft
(DFG) as project 271512359.

\section*{Statements \& Declarations}
\textbf{Funding}
None. \\
\noindent
\textbf{Competing Interests}
The authors declare that they have no conflict of interest.\\
\noindent
\textbf{Author Contributions}
All authors contributed to the study conception and design. 
Material preparation, data collection and analysis were performed by 
Paul Manns. 
The first draft of the manuscript was written by 
Paul Manns and Robert Baraldi. 
All authors read and approved the final manuscript. \\
\noindent
\textbf{Data Availability} The code and data generated for the above
numerical studies are available in \href{https://github.com/paulmanns/ioc-tv-2d-benchmarks}{https://github.com/paulmanns/ioc-tv-2d-benchmarks}.

\bibliographystyle{plain}
\bibliography{references}
\appendix

\section{Asymptotics of \ref{alg:randomized_patch_slip}}\label{sec:algorithm_analysis}
In order to  analyze the iterations of \ref{alg:randomized_patch_slip}, we consider the countable stochastic
processes of random variables $(D_n)_n$ of patch decisions $D_n \in \calD$, $(W_n)_n$ of iterates,
and $(K_n)_n$ of minimum predicted reduction thresholds
$K_n \in [0,\infty)$.
Since the patch selection in l.\ \ref{ln:select_patch} is the only random operation in \ref{alg:randomized_patch_slip},
we obtain a functional dependence for the transition from $W_n$, $K_n$ to $W_{n+1}$, $K_{n+1}$,
which we denote by $T$:
\begin{gather}\label{eq:transition_function}
\begin{aligned}
T : \BVW(\Omega)\times \calD\times [0,\infty) &\to \BVW(\Omega) \times [0,\infty), \\
(W_{n+1}, K_{n+1}) &= T(W_n, K_n, D_n).
\end{aligned}
\end{gather}
For events $x \in X$, which will be defined formally in \cref{sec:probability_space} below, we thus have
$$W_{n}(x) = T(T(T(\cdots T(w_0, D_1(x)), \cdots), D_{n-2}(x)), D_{n-1}(x))$$ 
for $x \in X$ in line with \eqref{eq:transition_function}.
Under \cref{ass:standing}, every realization $\{W_n(x)\}_n$ for
$x \in X$ admits an accumulation point and our analysis will show that
every such accumulation point is necessarily stationary.

We split our analysis into several steps. We construct the underlying
probability space in \cref{sec:probability_space}. Afterwards, we 
prove the well-definedness of \ref{alg:randomized_patch_slip},
that is, the inner loop terminating, in \cref{sec:welldefinedess}.
Finally, we prove convergence in \cref{sec:convergence_proof}.

\subsection{Construction of Probability Space}\label{sec:probability_space}
First, we develop a suitable setting to define $(D_n)_n$ and a
corresponding probability space $(X, \Sigma, P)$ in a recursive
manner. We consider $X_n = \prod_{i=0}^n \calD$ and 
$\Sigma_n = \bigotimes_{i=0}^n 2^{|\calD|}$ for all $n \in \N$. We define
$\Sigma_0 = 2^{|\calD|}$ and $\mu_0$ as the probability
measure on $(X_0,\Sigma_0)$ that is implied by
$\mu_0([D_0 = D]) \coloneqq p_0^D$ for $D \in \calD$
as the base case.

Now, for a given probability measure $\mu_{n-1}$ on 
$(X_{n-1},\Sigma_{n-1})$ for $n \in \N$, we define the probability 
measure $\mu_n$ on $(X_n,\Sigma_n)$ as follows.
We define extended random variables $F_n = (F_{n,0}, \ldots, F_{n,n})$
on $X_n$ and set $D_{n} \coloneqq F_{n,n}$. We define the following 
probabilities
for $(D^0,\ldots,D^{n-1},D) \in X_{n}$
\begin{gather}\label{eq:extended_markov}
\begin{aligned}
\mu_{n}\big([F_{n,n} = D, F_{n-1,n-1} = D^{n-1}, \ldots , F_{0,0} 
= D^0]\big)
& \coloneqq p^n_D\mu_n\big([F_{n,0:n-1} = (D^0,\ldots,D^{n-1})]\big),\\
\mu_n\big([F_{n,0:n-1} = (D^0,\ldots,D^{n-1})]\big)
&\coloneqq \mu_{n-1}\big([F_{n-1} = (D^0,\ldots,D^{n-1})]\big)
\end{aligned}
\end{gather}
implying that
\[ p^n_D = \mu_n([D_{n} = D\,|\,D_{n-1} = D^{n-1},
\ldots, D_0 = D^0]),
\]
that is, that $p^n_D$ is the conditional probability
of selecting $D$ in the $n$-th iteration given the
decisions in the previous iterations. For all $n \in \N$, 
all functional dependences occur on $\sigma$-algebras 
that are power sets (of finite sets) so that they are 
measurable operations. In combination with 
\cref{ass:selection_probabilities}, we obtain that
$(p_D^n)_n$ induces a sequence of Markov kernels.
Then the Ionescu-Tulcea extension theorem implies that
there exists a unique extension to a probability
measure $P$ on $(X \coloneqq \prod_{i=0}^\infty X_i, \Sigma_P \coloneqq \bigotimes_{i=0}^\infty \Sigma_i)$ that satisfies
\begin{gather}\label{eq:ionescu_tulcea}
P\Big(A \times \prod_{i=n+1}^\infty \calD\Big) = \mu_{n}(A)
\end{gather}
for all $A \in \Sigma_n$ and all $k \in \N$.
In order to enrich the set of sets of measure zero, we consider
the completion of the constructed measure space now and,
since we only work with this one, denote it
by $(X,\Sigma_P,P)$ from now on.

We note that evaluating $T$ involves evaluating the 
(potentially multi-valued) solution maps of a sequence of 
nonsmooth and nonconvex problems in the inner loop of
\ref{alg:randomized_patch_slip}. It seems unlikely to us that
one can prove that $T$ is measurable in a straightforward
manner like verifying the prerequisites of a measurable 
selection theorem.

\subsection{Well-definedness}\label{sec:welldefinedess}
Our first step towards showing well-definedness is
that $\kappa_n(x) \searrow 0$ for all realizations $x \in X$.
\begin{lemma}\label{lem:kappa_to_zero}
$X = \big\{ x \in X\,:\, \kappa_n(x) \searrow 0\big\}$.
\end{lemma}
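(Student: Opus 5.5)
The claim is pathwise, so I will fix an arbitrary realization $x \in X$ and argue deterministically. By construction $\kappa_{n+1}(x) \in \{\kappa_n(x), \tfrac12 \kappa_n(x)\}$ and $\kappa_0 > 0$. Hence $(\kappa_n(x))_n$ is nonincreasing and positive. It therefore suffices to show that the halving branch (l.\ \ref{ln:reduce_kappa}) is executed infinitely often; then $\kappa_n(x) \le \kappa_0 2^{-m_n}$ with $m_n \to \infty$. I will argue by contradiction and assume that there is $n_0$ such that for all $n \ge n_0$ the iteration ends through the acceptance branch (l.\ \ref{ln:psuff_dec}). Then $\kappa_n(x) = \kappa_{n_0}(x) =: \bar\kappa > 0$ for all $n \ge n_0$.

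In that case every iteration $n \ge n_0$ produces an accepted step $w^{n+1} = \tilde w^{n,k,D_n}$ with $\pred^{n,k,D_n} > \bar\kappa$ and $\ared^{n,k,D_n} \ge \sigma \pred^{n,k,D_n}$. Writing $J = F + \alpha\TV$, this gives
\[
J(w^{n+1}) \le J(w^n) - \sigma \bar\kappa \quad\text{for all } n \ge n_0 .
\]
Telescoping yields $J(w^{n}) \le J(w^{n_0}) - (n-n_0)\sigma\bar\kappa \to -\infty$. On the other hand, $F$ is bounded below by \cref{ass:standing} and $\alpha\TV \ge 0$, so $J$ is bounded below on $\BVW(\Omega)$. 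This is a contradiction, so the halving branch occurs infinitely often and $\kappa_n(x) \searrow 0$.

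The only delicate point is well-definedness. The argument needs each outer iteration to terminate through one of the two branches, that is, the inner $k$-loop must not run forever. I expect this to be the main obstacle, but it can be handled directly. Because of the trust-region constraint, $\|\tilde w^{n,k,D_n} - w^n\|_{L^1} \le \Delta_0 2^{-k}$. Since $(\nabla F(w^n), \cdot)$ is bounded on $L^1$ (as $\nabla F(w^n) \in L^\infty$) and $\TV(\tilde w) \ge 0$, the linear part of $\pred$ tends to zero. The $\TV$ difference cannot stay above the threshold either: a minimizer realizing $\pred > \bar\kappa$ for arbitrarily small radii would have to lower $\TV$ by a fixed amount through arbitrarily small $L^1$ perturbations. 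Here I would invoke the argument from the SLIP analysis in \cite{manns2023on} and Lemma 5.3 in \cite{baraldi2025domain}. Either $\pred^{n,k,D_n} \le \kappa_n$ eventually, or the step is accepted, because the Lipschitz bound in \cref{ass:standing} gives $|\ared - \pred| \le \tfrac{L_{\nabla F}}{2}\Delta_0^2 4^{-k}$, which is eventually below $(1-\sigma)\kappa_n$ while $\pred > \kappa_n$. In either case the inner loop terminates. Since every realization $x$ was arbitrary, the set $\{x : \kappa_n(x)\searrow 0\}$ equals $X$, with no measurability issue arising.
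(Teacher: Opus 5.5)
Your proof is correct and follows the paper's argument. The paper likewise combines two facts: infinitely many accepted steps with $\pred^{n,k,D_n} > \bar\kappa$ would force $J \to -\infty$, and $\ared^{n,k,D_n} - \pred^{n,k,D_n} \to 0$ as $\Delta_0 2^{-k} \to 0$ forces acceptance while $\pred^{n,k,D_n} > \kappa_n$. The only differences are organizational. The paper nests inner-loop termination inside a single contradiction (using only Fr\'echet differentiability), whereas you prove it first via your quadratic Lipschitz bound. Your intermediate digression on the $\TV$ difference, with its appeal to \cite{manns2023on} and Lemma 5.3 of \cite{baraldi2025domain} (which concerns instationary limit points), is unnecessary, since the dichotomy you state at the end already guarantees that the inner loop terminates.
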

\begin{proof}
By way of contradiction, let $x \in X\setminus \{ x \in X\,:\, \kappa_n(x) \searrow 0\}$, that is, $\kappa_n(x) \searrow \kappa > 0$.
Then the condition in \ref{alg:randomized_patch_slip} l.\ \ref{ln:reduce_kappa} is evaluated to true only finitely
many times and there is thus a largest outer iteration $n_0$ such that for all $n \ge n_0$ and all inner iterations
$k$, the condition in Line \ref{ln:psuff_dec} is satisfied or (else) the condition in Line \ref{ln:reduce_kappa}
is evaluated to false.
The first case can only happen finitely many times because otherwise the actual reduction would be bounded below
by $\sigma \kappa$ and thus $J(W_n(x)) \to -\infty$ holds, which contradicts that $F$, $\TV$ and in turn $J$
are bounded below.
Consequently, there exists an outer iteration $n$ such that for all inner iterations Line \ref{ln:psuff_dec} is
never satisfied and $\kappa_n$ is never reduced. 

In outer iteration $n$, the inner loop does not terminate so that $\Delta_0 2^{-k} \to 0$, which implies
$\|\tilde{W}_{k,n,D_n}(x) - W_n(x)\|_{L^1} \to 0$. By virtue of \cref{ass:standing}, we obtain
\[ F(W_n(x)) - F(\tilde{W}_{k,n,D_n}(x)) - (\nabla F(W_n(x)), W_n(x) - \tilde{W}_{k,n,D_n}(x))_{L^2} \to 0
\]
and in turn $\ared^{k,n,D_n} \to \pred^{k,n,D_n} > \kappa_n \ge \kappa > 0$ so that
$\ared^{k,n,D_n} \ge \sigma \pred^{k,n,D_n}$ holds eventually, which contradicts 
that Line \ref{ln:psuff_dec} is never satisfied.
\end{proof}
Next, we deduce that the inner loop of \ref{alg:randomized_patch_slip} that starts in Line \ref{ln:tr_reduction} terminates finitely.
\begin{lemma}\label{lem:well-defined_inner_loop}
The inner loop starting in \ref{alg:randomized_patch_slip} l.\ \ref{ln:tr_reduction} is well defined
for all $x \in X$, that is, it terminates finitely.
\end{lemma}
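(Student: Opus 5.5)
Fix a realization $x \in X$ and an outer iteration $n$ with current iterate $W_n(x) \in \BVW(\Omega)$, threshold $\kappa_n(x) > 0$ and selected patch $D_n(x)$. I will argue by contradiction and essentially reuse the last part of the proof of \cref{lem:kappa_to_zero}, but for a single fixed outer iteration. Suppose the inner loop over $k$ never terminates. Then for every $k$, neither the acceptance test in Line \ref{ln:psuff_dec} nor the test $\pred^{n,k,D_n} \le \kappa_n$ in Line \ref{ln:reduce_kappa} holds. In particular,
\[
\pred^{n,k,D_n} > \kappa_n(x) > 0
\quad\text{and}\quad
\ared^{n,k,D_n} < \sigma \pred^{n,k,D_n}
\qquad\text{for all } k.
\]
Note that $\kappa_n(x) > 0$ holds for every $n$. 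This follows by induction: $\kappa_0 > 0$, and each update either keeps $\kappa_n$ or halves it.

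Next I use the trust-region constraint. Every trial point satisfies $\|\tilde{W}_{n,k,D_n}(x) - W_n(x)\|_{L^1} \le \Delta_0 2^{-k} \to 0$. The trial points exist by the existence result for \eqref{eq:trp}, and they are feasible, so they take values in $W \subset \conv W$. Since $W_n(x)$ also takes values in $W$, the mean value theorem and the Lipschitz bound of \cref{ass:standing} on $\nabla F$ over $\conv W$-valued functions give
\[
\bigl|F(W_n) - F(\tilde{W}_{n,k,D_n}) - (\nabla F(W_n), W_n - \tilde{W}_{n,k,D_n})_{L^2}\bigr|
\le L_{\nabla F}\, \|\tilde{W}_{n,k,D_n} - W_n\|_{L^1}^2
\le L_{\nabla F}\, \Delta_0^2 4^{-k}.
\]
The $\TV$ terms are identical in $\ared$ and $\pred$ and cancel. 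Hence $|\ared^{n,k,D_n} - \pred^{n,k,D_n}| \to 0$.

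Combining this with $\pred^{n,k,D_n} > \kappa_n(x)$ yields
\[
\ared^{n,k,D_n} \ge \pred^{n,k,D_n} - L_{\nabla F}\Delta_0^2 4^{-k} \ge \sigma \pred^{n,k,D_n}
\]
as soon as $L_{\nabla F}\Delta_0^2 4^{-k} \le (1-\sigma)\kappa_n(x)$, which holds for all sufficiently large $k$. At such a $k$, the acceptance test in Line \ref{ln:psuff_dec} holds, which contradicts our assumption. So the inner loop terminates at a finite $k$, either by accepting a step or by halving $\kappa_n$.

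The main subtlety is the remainder estimate. The Lipschitz constant in \cref{ass:standing} is a bound from $L^1$ to $L^\infty$ that is only asserted on $\conv W$-valued functions. So I must check that the segment between $W_n$ and $\tilde{W}_{n,k,D_n}$ stays $\conv W$-valued, which it does by convexity. I must also check that $|(\nabla F(\xi)-\nabla F(W_n), h)_{L^2}| \le \|\nabla F(\xi)-\nabla F(W_n)\|_{L^\infty}\|h\|_{L^1}$. Once these are verified, the argument is independent of $x$ and of the patch, so it holds for all $x \in X$, as the statement requires.
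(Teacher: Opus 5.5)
Your proof is correct. Its analytic core matches the paper's, but it is organized differently.

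The paper proves this lemma in one line. It deduces finite termination from \cref{lem:kappa_to_zero} (that $\kappa_n(x)\searrow 0$ for every realization), plus the remark that $\kappa_n$ can only be halved after an inner loop has terminated. The actual estimate, namely $\ared^{n,k,D_n}-\pred^{n,k,D_n}\to 0$ as the radius shrinks while $\pred^{n,k,D_n}>\kappa_n(x)$, appears in the last step of the proof of \cref{lem:kappa_to_zero}.

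You instead run that estimate directly for one fixed outer iteration. This is implicitly an induction on $n$, since fixing $W_n(x)$ and $\kappa_n(x)>0$ presupposes that the earlier inner loops terminated. Your route has three advantages:
\begin{itemize}
\item It is self-contained. It needs neither boundedness of $J$ from below nor any statement about the whole sequence $(\kappa_n(x))_n$ before that sequence is known to be defined for all $n$.
\item It gives an explicit bound: the loop terminates at the latest once $L_{\nabla F}\Delta_0^2 4^{-k}\le(1-\sigma)\kappa_n(x)$.
\item It uses only feasibility of the trial points, not their optimality. So it also covers inexact subproblem solves such as those produced by the early-termination rule.
\end{itemize}

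Your Lipschitz-based remainder bound is more than the argument needs. Continuity of $F$ and $\nabla F(W_n(x))\in L^\infty(\Omega)$ already give $\ared^{n,k,D_n}-\pred^{n,k,D_n}\to 0$. The bound is still valid, and in fact holds with constant $L_{\nabla F}/2$.

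In exchange, the paper's route delivers $\kappa_n\searrow 0$ for all realizations along the way. That fact is used later in \cref{lem:strict_convergence} and \cref{thm:main_convergence_theorem}.
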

\begin{proof}
This follows directly from \cref{lem:kappa_to_zero} and the fact that finite termination of the inner loop
is necessary for a reduction of $\kappa_n(x)$.
\end{proof}

\subsection{Convergence Proof}\label{sec:convergence_proof}
Before proving our results formally, we give a brief intuition about
what happens here when the algorithm hits a point that is patch-stationary
on $D$. For the case that $D_n = D$, $W_n = w$, $K_n = \kappa$ and $w$
is patch-stationary on $D$, it can happen that a step is
accepted nevertheless in the inner loop for some large enough $k \in \N$. In this case, the fact that the objective values decrease monotonically
over the iterations of \ref{alg:randomized_patch_slip} implies that the algorithm moves away from $w$ and cannot
return so that this case is not important to our analysis. If no
step is accepted, either $\pred^{n,k,D} \searrow 0$ as $k \to \infty$ or $\pred^{n,k,D} = 0$ for some large enough $k$. Thus, eventually
$\pred^{n,k,D} \le \kappa_n$ holds so that for the relevant iterates 
that are patch-stationary on $D$, we obtain
\[ T(W_n, K_n, D_n) = (W_n, 2^{-1} K_n). \]
Before finally proving convergence to stationary points
with probability one, we need the auxiliary result
that all weak-$^*$ accumulation points of all realizations are strict.
\begin{lemma}\label{lem:strict_convergence}
It holds that
\[ N_1 \coloneqq \big\{ x \in X\,:\,
\exists (n_\ell)_\ell \subset (n)_n : W_{n_\ell}(x) \weakstarto \bar{w} \text{ in } \BVW(\Omega)
\text{ and } \bar{w} \text{ is not a strict limit} \big\} \in \Sigma_P
\]
and $P(N_1) = 0$.
\end{lemma}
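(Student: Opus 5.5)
The plan is to show something stronger than $P(N_1)=0$, namely $N_1=\emptyset$. Then measurability is trivial, since $\emptyset\in\Sigma_P$. This sidesteps the measurability issue for $T$ raised at the end of \cref{sec:probability_space}.

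Fix an arbitrary realization $x \in X$. First observe that the objective values $J(W_n(x))$ are monotonically nonincreasing. Indeed, an accepted step satisfies $\ared^{n,k,D_n} \ge \sigma\pred^{n,k,D_n} > \sigma\kappa_n > 0$, and otherwise $W_{n+1}(x)=W_n(x)$. Since $F$ is bounded below, we get $\alpha\TV(W_n(x)) \le J(w^0) - \inf F$, so $(W_n(x))_n$ is bounded in $\BV(\Omega)$. Weak-$^*$ cluster points $\bar w$ exist, and they lie in $\BVW(\Omega)$ because $L^1$-limits of $W$-valued functions are $W$-valued a.e. Now take a subsequence $W_{n_\ell}(x)\weakstarto\bar w$. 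Then $W_{n_\ell}(x)\to\bar w$ in $L^1$, and lower semicontinuity gives $\TV(\bar w)\le\liminf_\ell\TV(W_{n_\ell}(x))$. What must be shown is the reverse inequality $\limsup_\ell \TV(W_{n_\ell}(x))\le\TV(\bar w)$, i.e.\ strict convergence.

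For this I would use the sufficient-decrease/comparison argument from the deterministic analyses (cf.\ the strict-convergence lemmas in \cite{manns2023on,baraldi2025domain}). The key quantity is the converging sequence $J(W_n(x))\searrow J^\ast$. Since $F$ is continuous on $L^1$ and $W_{n_\ell}(x)\to\bar w$ in $L^1$, we have $F(W_{n_\ell}(x))\to F(\bar w)$. Hence
\[
\alpha\TV(W_{n_\ell}(x)) = J(W_{n_\ell}(x)) - F(W_{n_\ell}(x)) \to J^\ast - F(\bar w).
\]
So $\TV(W_{n_\ell}(x))$ converges, and it remains to show $J^\ast \le J(\bar w)$. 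I would argue this by contradiction. Suppose $\lim\TV(W_{n_\ell}(x)) > \TV(\bar w)+\eta$ for some $\eta>0$. The natural tool is a trust-region competitor built from $\bar w$: for large $\ell$, $\|\bar w-W_{n_\ell}(x)\|_{L^1}$ is small. A localized version $\bar w\chi_{D}+W_{n_\ell}(x)\chi_{\Omega\setminus D}$ on a patch, possibly after a smooth cut-off adjusted via the coarea formula to control the interface perimeter, is then feasible for \eqref{eq:trp} with a small radius. It yields predicted reduction bounded below by roughly $\alpha\eta/|\calD|$ minus an $o(1)$ linear term. This contradicts $\pred\le\kappa_n\to0$ (\cref{lem:kappa_to_zero}), or contradicts the summability of decreases, on the patches where the TV excess concentrates.

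The main obstacle is this localization step. The competitor must be restricted to a patch $D$ without creating large new perimeter on $\partial D$, which requires the coarea/slicing argument to select good cut levels. One must also ensure that the patch in question is actually visited with small $\kappa_n$ infinitely often near $n_\ell$. The second point is where \cref{ass:selection_probabilities} ($p_D^n\ge p_0$) might seem necessary. However, the argument should be run only along the deterministic structure: iterations at which $\kappa_n$ is halved occur infinitely often. So I would first try to prove that the TV excess vanishes using the full-domain comparison of the deterministic proofs, applied patch by patch on whichever patch is selected, and then conclude $N_1=\emptyset$, hence $P(N_1)=0$.
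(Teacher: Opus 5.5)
There is a genuine gap: the proposal never shows that the patch carrying the excess is actually selected at the relevant iterations.

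Your reduction to $J^\ast \le J(\bar w)$ is fine. So is the localized competitor built from $\bar w$, which is essentially what Lemma 5.7 of \cite{baraldi2025domain} provides and what the paper invokes. The missing step is turning a positive predicted reduction on the patch that carries the $\TV$-excess into a contradiction.

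The competitor gives $\pred^{n_\ell,k_0,D} > p > 0$ only for one \emph{fixed} patch $D$, determined by where the excess concentrates. To force $J(W_n(x)) \to -\infty$, you need $D_{n_\ell}(x) = D$ for infinitely many $\ell$. At minimum, $D$ must be chosen infinitely often at iterations where the iterate still agrees with $W_{n_\ell}(x)$ near the excess. The fact that $\kappa_n$ is halved infinitely often gives no such information. The inequality $\pred^{n,k,D_n} \le \kappa_n$ concerns only the \emph{selected} patch $D_n(x)$, and nothing in the algorithm forces $D_n(x) = D$ when it matters.

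The set $X$ contains every sequence of patch choices, including adversarial ones that avoid $D$ along $(n_\ell)$. For those realizations your argument says nothing. A natural repair would be to pass to the first iteration after $n_\ell$ at which a patch meeting the excess region is chosen. This is not obviously sufficient when the excess concentrates near the boundary of another patch, because selections of that patch can change the iterate there in the meantime without producing a uniform decrease. So $N_1 = \emptyset$ is not established, and it is not clear that it holds for every realization.

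This is exactly where the paper uses randomness. For large $\ell$ you have $\pred^{n_\ell,0,D} \ge \pred^{n_\ell,k_0,D} > p > \kappa_{n_\ell}$, so \cref{ass:selection_probabilities} gives $p^{n_\ell}_D \ge p_0$. Every $x \in N_1$ therefore lies in the event that $D$ is selected only finitely often along the subsequence. That event is the countable union over $k$ of the sets $\bigcap_{\ell \ge k}\{D_{n_\ell}(x) \neq D\}$. By \eqref{eq:extended_markov} and \eqref{eq:ionescu_tulcea}, each of these has probability at most $\lim_{m\to\infty}(1-p_0)^m = 0$.

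Measurability of $N_1$ then follows from completeness of $(X,\Sigma_P,P)$, since $N_1$ is contained in a null set; it does not require $N_1$ to be empty. You flagged this point yourself (``might seem necessary'') but then discarded the one ingredient that closes it. The conclusion can only be an almost-sure statement.
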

\begin{proof}
Let $x \in N_1$ with corresponding subsequence $(n_\ell)_\ell \subset (n)_n$ and weak-$^*$ but not strict limit
$\bar{w} \in \BVW(\Omega)$. Since $F$ is continuous and bounded below and the objective $J$ is monotonically
decreasing over the iterations by virtue of the acceptance criterion in \ref{alg:randomized_patch_slip} l.\ \ref{ln:psuff_dec}
it follows that the sequence $\{J(W_{n_\ell}(x))\}_\ell$ converges
and there is no subsequence of $\{W_{n_\ell}(x)\}_\ell$ that converges strictly to $\bar{w}$.

A close inspection of (the proof of) Lemma 5.7 in \cite{baraldi2025domain} gives that there exist $D \in \calD$,
$\ell_0 \in \N$, and $k_0 \in \N$, such that $\ared^{n_\ell,k_0,D} \ge \sigma \pred^{n_\ell,k_0,D} > p > 0$
for some $p > 0$ and all $\ell \ge \ell_0$. Since $\kappa_n \searrow 0$ holds, we can increase $\ell_0$ and
obtain that \ref{alg:randomized_patch_slip} l.\ \ref{ln:psuff_dec} is satisfied for all $\ell \ge \ell_0$ if
$D_{n_\ell}(x) = D$. Consequently, we obtain $J(W_{n}(x)) \to -\infty$ holds if the
choice $D_{n_{\ell_m}}(x) = D$ is made for infinitely many $\ell$ in \ref{alg:randomized_patch_slip}
l.\ \ref{ln:select_patch}.

This contradicts that $F$, $\TV$, and in turn $J$ are bounded below
and we obtain $x \in N_1^D$, where
\[ N_1^D = 
\big\{ x \in X\,:\,
|\{\ell \in \N\,:\, D_{n_\ell}(x) = D\}| < \infty \big\}.
\]
We can write $N_1^D = \bigcup_{k=1}^\infty N_1^D(k)$ with
\[ N_1^D(k) = 
\big\{ x \in X\,:\, \max \{ \ell \in \N\,:\, D_{n_\ell}(x) = D \} \le k \big\}
= \bigcap_{\ell \ge k}
\big\{ x\in X\,:\,D_{n_\ell}(x) \in \calD \setminus \{D\}\big\},
\]
where $N_1^D(k)$ is measurable as the countable intersection of measurable sets and
$N_1$ is measurable as a countable union of measurable sets.

Because the trust-region radius shrinks for increasing inner iterations, we have 
$\pred^{n_\ell,0,D} \ge \pred{n_\ell,k_0,D} > p > 0$ for all $\ell \ge \ell_0$.
Since $\kappa_n \searrow 0$ holds in \cref{ass:selection_probabilities},
we obtain (after potentially increasing $\ell_0$) that $ p^{n_\ell}_D \ge p_0$
holds for all $\ell \ge \ell_0$.

Consequently, we have for all $\ell \ge \ell_0$ that
\begin{gather}\label{eq:probability_except_D}
\mu_{n_\ell}\big(\big\{ x\in X\,:\,D_{n_\ell}(x) \in \calD \setminus \{D\} \big\}\big) = 1 - p^{n_\ell}_D \le 1 - p_0
\end{gather}
holds. We deduce by means of the regularity of $P$ that
\begin{align*}
P(N_1^D(k)) &\le P\Big(\bigcap_{\ell \ge \max\{k,\ell_0\}}
\big\{ x\in X\,:\,D_{n_\ell}(x) \in \calD \setminus \{D\}\big\}\Big) \\
&= \lim_{m\to\infty} P\Big(\bigcap_{\ell = \max\{k,\ell_0\}}^m
\big\{ x\in X\,:\,D_{n_\ell}(x) \in \calD \setminus \{D\}\big\}\Big)  \\
&= \lim_{m\to\infty} P\Big(\prod_{i=1}^\infty A_i^m\Big) \\
&= \lim_{m\to\infty} \prod_{\ell = \max\{k,\ell_0\}}^m (1 - p_{n_\ell}^D)
\le \lim_{m\to\infty} (1 - p_0)^m = 0,
&& \eqref{eq:extended_markov},\eqref{eq:ionescu_tulcea},\eqref{eq:probability_except_D}
\end{align*}
where
\[ A_n^m = \left\{
\begin{aligned}
\calD \setminus \{D\} & \text{ if }
n = n_\ell \text{ for some } \ell \in \{\max\{k,\ell_0\},\ldots,m\}
\\
\calD & \text{ else.}
\end{aligned}
\right.
\]
Employing the regularity of $P$ again gives $P(N_1^D) = 0$ and in turn that $N_1 \in \Sigma_P$
with $P(N_1) = 0$ because $N_1^D \subset N_1$.
\end{proof}
We are ready to prove our main result that
\ref{alg:randomized_patch_slip} converges
to a stationary point with probability
one. We briefly recall the theorem.
\mainthm*
\begin{proof}
By construction of $P$, every set that is a subset of a measurable set with measure zero is measurable
and has measure zero. Therefore, it is sufficient to show that $N$ is included in a measurable set of measure
zero. 

To prove the claim, we now follow the 
deterministic strategy of Theorem 6.4 in 
\cite{manns2023on} and Theorem 5.8 in 
\cite{baraldi2025domain}. From 
\cref{lem:strict_convergence},
we obtain strict convergence with probability 
one. Thus, it remains to prove $N_2 \in \Sigma_P$ and $P(N_2) = 0$ for
\[ N_2 \coloneqq
\big\{ x \in X\,:\,
\exists (n_\ell)_\ell \subset (n)_n : W_{n_\ell}(x) \to \bar{w} 
\text{ strictly in } \BVW(\Omega)
\text{ for some instationary } \bar{w}
\big\}
\]
and the claim follows from $N \subset N_2$.
Let $x \in N_2$ with corresponding subsequence $(n_\ell)_\ell \subset (n)_n$ and strict limit $\bar{w} \in \BVW(\Omega)$.
Applying Lemma 5.3 in \cite{baraldi2025domain} gives the existence of $D \in \calD$, $k_0 \in \N$, $\ell_0 \in \N$,
and $p > 0$ such that
\[ {\ared}^{n_\ell,k_0,D} \ge \sigma {\pred}^{n_\ell,k_0,D}
\enskip\text{and}\enskip
{\pred}^{n_\ell,k_0,D} > p \]
hold for all $\ell \ge \ell_0$. After potentially increasing $\ell_0$, we also obtain
$p > \kappa_{n_\ell}$ 
and, because of
$\pred^{n_\ell,0,D}
\ge \pred^{n_\ell,k_0,D}$,
$\pred^{n_\ell,0,D} > \varepsilon_{n_\ell}$
for all $\ell \ge \ell_0$. Consequently,
\eqref{eq:probability_except_D} holds
for $\ell \ge \ell_0$
as in \cref{lem:strict_convergence}.

This means that the sufficient decrease condition \ref{alg:randomized_patch_slip} l.\ \ref{ln:psuff_dec} is satisfied
and we obtain $J(W_{n_\ell}(x)) \to -\infty$ if $D_{n_\ell}(x) = D$ holds for infinitely many
$\ell \ge \ell_0$. This contradicts that $F$, $\TV$, and in turn $J$ are bounded below
so that $x \in N_2^D =  \big\{ x \in X\,:\, |\{\ell \in \N\,:\, D_{n_\ell}(x) = D\}| < \infty \big\}$.
As in \cref{lem:strict_convergence}, we can write $N_2^D = \bigcup_{k=1}^\infty N_2^D(k)$ with
$N_2^D(k) = \bigcap_{\ell \ge k} \big\{ x\in X\,:\,D_{n_\ell}(x) \in \calD \setminus \{D\}\big\}$.

With the same arguments as in 
\cref{lem:strict_convergence}
we obtain $N_2^D(k) \in \Sigma_P$,
$P(N_2^D(k)) \le \lim_{m\to\infty} (1 - p_0)^m = 0$ from \eqref{eq:probability_except_D},
and in turn $N_2 \in \Sigma_P$
and $P(N_2) = 0$.
\end{proof}

% \section{Computational performance enhancement
	% on trust-region algorithm
% }
%

\section{\texorpdfstring{($\operatorname{TRP}^h$) $\Gamma$-converges to ($\operatorname{\ref{eq:trp}}$)}{Gamma Convergence. }}\label{sec:trph_trp}
In the setting of \cref{sec:w_tvh_discretization}
and for $g \in L^1(D)$, we define
the functionals $j^h : DG0^{\tau_h} \to \R\cup\{\infty\}$,
$j : L^\infty(D) \to \R$ as 
\begin{align*}
j^h(w) &\coloneqq 
   \int_D gw \dd x +
   + \alpha \underbrace{\max\bigl\{\tfrac{1}{\sqrt{2}} \TV(w), \TV^h(w)\bigr\}}_{\eqqcolon T^h(w)}
   + \delta_{W}(w) + \delta_{\Delta}(w), \\
j(w)  &\coloneqq \int_\Omega gw \dd x +
+ \alpha \TV(w)
+ \delta_{W}(w) + \delta_{\Delta}(w),
\end{align*}
where $\delta_{W}$ is the $\{0,\infty\}$-valued
indicator functional of the set
$\calW \coloneqq \{ w \in L^\infty(D) : w(x) \in W \text{ for a.e.\ } x \in D\}$
and $\delta_\Delta$ is the $\{0,\infty\}$-valued indicator
functional of the set
$\{ w \in L^\infty(D) : \|w - \bar{w}\|_{L^1}(D) \le \Delta\}$.
Thus, we have restricted w to the domain $D$ of 
\eqref{eq:trp}, on which changes can take place and tacitly
assume that $w$ is extended by $\bar{w}$ outside on $\Omega\setminus D$. We argue the intended $\Gamma$-convergence and convergence
of minimizers below.
\begin{lemma}\label{lem:trph_trp}
Let $\tau_h \in o(h)$ as $h \searrow 0$. Then
$j^h$ $\Gamma$-converges to $j$ with respect to
weak-$^*$ convergence in $\BV(\Omega)$
on the set $\BVW(\Omega)$.
\end{lemma}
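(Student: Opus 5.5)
We prove the two halves of $\Gamma$-convergence separately. Throughout we rely on the results of \cite{schiemann2025discretization} for the trust-region-free functional
\[
\tilde j^h(w) \coloneqq \int_D g w\dd x + \alpha T^h(w) + \delta_W(w),
\qquad
\tilde j(w) \coloneqq \int_D g w \dd x + \alpha \TV(w) + \delta_W(w).
\]
Theorems 3.2--3.4 therein give $\Gamma$-convergence of $\tilde j^h$ to $\tilde j$ under $\tau_h \in o(h)$, together with an explicit recovery sequence. The task is to add the indicator $\delta_\Delta$ on both sides.

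\emph{Liminf inequality.} Let $w^h \weakstarto w$ in $\BV(\Omega)$ with $w^h \in DG0^{\tau_h}$, and assume $\liminf_h j^h(w^h) < \infty$; otherwise there is nothing to show. Pass to a subsequence realizing the liminf. Along it $\delta_W(w^h) = \delta_\Delta(w^h) = 0$, and weak-$^*$ convergence in $\BV$ implies strong $L^1$ convergence. Hence $w \in \calW$, because $\calW$ is $L^1$-closed, and
\[
\|w - \bar w\|_{L^1(D)} = \lim_h \|w^h - \bar w\|_{L^1(D)} \le \Delta .
\]
The linear term converges, since $g \in L^1$ and the $w^h$ are uniformly bounded in $L^\infty$ and converge a.e. along a further subsequence. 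Finally $\liminf_h T^h(w^h) \ge \TV(w)$ is exactly the liminf part of \cite[Thm.\ 3.2--3.4]{schiemann2025discretization}.

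\emph{Limsup inequality (the main obstacle).} Fix $w \in \BVW$ with $j(w) < \infty$, so $\|w - \bar w\|_{L^1} \le \Delta$. The recovery sequence $w^h$ from \cite{schiemann2025discretization} satisfies $w^h \to w$ in $L^1$ and $T^h(w^h) \to \TV(w)$. However, it may violate the trust-region constraint by a vanishing amount $\|w^h - \bar w\|_{L^1} - \Delta \to 0$; this is the additional approximation step mentioned in \cref{sec:trp_discretization}.

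\begin{enumerate}
\item \emph{Strict interior case} $\|w - \bar w\|_{L^1} < \Delta$. Then $w^h$ is feasible for small $h$ and we are done.
\item \emph{Boundary case.} First approximate $w$ by elements $w_\varepsilon \in \BVW$ with $\|w_\varepsilon - \bar w\|_{L^1} < \Delta$, $w_\varepsilon \to w$ strictly (in particular $\TV(w_\varepsilon) \to \TV(w)$), and $j(w_\varepsilon) \to j(w)$. Then apply a diagonal argument, combining the interior case with the standard fact that the $\Gamma$-limsup is lower semicontinuous with respect to the topology. Since bounded sets in $\BV$ are weak-$^*$ metrizable, diagonalization is legitimate.
\end{enumerate}

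To construct $w_\varepsilon$, I would first try the set interpolation
\[
w_\varepsilon \coloneqq \bar w\,\chi_{B_\varepsilon} + w\,\chi_{D \setminus B_\varepsilon},
\]
where $B_\varepsilon$ is a small region on which $w \ne \bar w$. This keeps $w_\varepsilon$ $W$-valued, and because $\|w - \bar w\|_{L^1} = \Delta > 0$, such regions exist. The difficulty is controlling $\TV(w_\varepsilon) - \TV(w)$. One natural choice is $B_\varepsilon = \{w \ne \bar w\} \cap B_r(x_0)$, with $r$ chosen by the coarea formula so that $\Ha^{d-1}$ of the new interfaces $\{w \ne \bar w\} \cap \partial B_r(x_0)$ is small; the added perimeter $\le C\,\Ha^{d-1}(\{w \ne \bar w\} \cap \partial B_r)$ tends to $0$ for suitable small radii. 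This step needs the most care. If $\bar w \in \BVW$ and jump sets are merely rectifiable, a density-point argument should give radii $r \to 0$ along which the added perimeter is $o(1)$, since at a Lebesgue point of $\{w \ne \bar w\}$ the slice measure is of order $r^{d-1}$ while the contribution to $\TV$ removed or added is bounded by $\TV$ restricted to $B_r$ plus that slice.

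Equi-coercivity then follows from the bound $T^h \ge \tfrac{1}{\sqrt 2}\TV$ together with $L^\infty$ bounds from $\delta_W$. This yields convergence of minimizers, which is the consequence claimed in \cref{sec:trp_discretization}.
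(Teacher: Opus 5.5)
Your proposal is correct and follows the same route as the paper. The $\liminf$-inequality comes from \cite{schiemann2025discretization} together with the lower semicontinuity of $\delta_\Delta$, and the $\limsup$-inequality splits into the interior case, where the recovery sequence of \cite{schiemann2025discretization} is eventually feasible, and the boundary case $\|w-\bar w\|_{L^1}=\Delta$, handled by strict approximation from inside the trust region plus a diagonal argument. The only difference is that the paper takes that inner strict approximation from the proof of Theorem 5.2 in \cite{manns2023on}, whereas you build it directly by setting $w_\varepsilon=\bar w$ on a small ball $B_r(x_0)$ around a density point of $\{w\ne\bar w\}$; this works in the paper's setting $d\ge 2$, since both the slice term and the $\TV$-mass of $w,\bar w$ on $B_r(x_0)$ vanish along suitable $r\searrow 0$.
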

\begin{proof}
The $\liminf$-inequality follows as in 
\cite{schiemann2025discretization} since
the only additional term $w \mapsto \delta_{\Delta}(w)$
in the functional is
lower semi-continuous due to $w \mapsto \|w - \bar{w}\|_{L^1}$
being continuous with respect to weak-$^*$ convergence
in $\BV(\Omega)$.

It remains to show the $\limsup$-inequality, which means that
for a given $w \in \BVW(\Omega)$, we need to construct 
$w^{h} \in DG0^{\tau_h}(D) \cap \calW$ such that
\[ w^{h} \to w \text{ in } L^1(\Omega),
   \quad
   T^h(w^{h}) \to \TV(w) \text{ in } \R,
   \quad
   \|w^{h} - \bar{w}\|_{L^1(D)} \le \Delta
\]
as $h \searrow 0$ with $\tau_h \in o(h)$
if $j(w) < \infty$. If $j(w) = \infty$, any sequence
will do. Thus, if $\|w - \bar{w}\|_{L^1(D)} > \Delta$,
there is no need further argument needed because $j(w) = \infty$.
If $\|w - \bar{w}\|_{L^1(D)} < \Delta$, the sequence from Theorem
3.13 from \cite{schiemann2025discretization} can be used
since $\|w^{h} - \bar{w}\|_{L^1(D)} \le \Delta$ holds eventually
in this case.

It remains to consider the case $\|w - \bar{w}\|_{L^1} = \Delta$.
In this case, we may employ the construction from the
proof of Theorem 5.2 from 
\cite{manns2023on}---see case $\|w - v\|_{L^1} = \Delta$ therein---to 
deduce the existence of a sequence $w^n \weakstarto w$ in $\BV(\Omega)$
with $\TV(w^n) \to \TV(w)$ with a corresponding sequence
$\varepsilon_n \to 0$ such that
$\|w^n - w\|_{L^1} \le \varepsilon_n/2$ and
$\|w^n - \bar{w}\|_{L^1} < \Delta - \varepsilon_n/2$.
For every $n \in \N$, we consider $(w^n)^h$ for $w^n$ from
Theorem 3.13 in \cite{schiemann2025discretization}.
Thus there always exists $h_n$ such that for $h \in (0,h_n)$,
we have $\|(w^n)^h - w^n\| \le \varepsilon_n / 2$,
$|T^h((w^n)^h) - \TV(w^n)| < \varepsilon_n / 2$
implying $\|(w^n)^h - w\|_{L^1} < \varepsilon_n$
and $\|(w^n)^h - \bar{w}\| \le \Delta$.
By choosing $w^h \coloneqq (w^n)^h$ for $h \in [h_{n+1},h)$, we obtain
the desired properties and the $\limsup$-inequality follows.
\end{proof}
As a consequence, we obtain convergence of minimizers below.
\begin{theorem}
Let $w^h$ solve $\operatorname{TRP}^h(\bar{w}, g, D, \Delta)$.
Then the sequence $(w^h)_h$ admits an accumulation
point with respect to weak-$^*$-convergence in $\BV(\Omega)$
and every such accumulation point solves
$\operatorname{\ref{eq:trp}}(\bar{w}, g, D, \Delta)$.
\end{theorem}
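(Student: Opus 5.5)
The proof follows the standard route from $\Gamma$-convergence to convergence of minimizers: first equi-coercivity to get compactness, then \cref{lem:trph_trp}.

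\emph{Compactness.} I will show that $(w^h)_h$ is bounded in $\BV(\Omega)$.
\begin{itemize}
\item Each $w^h$ lies in $\calW$ on $D$ and equals $\bar{w}$ on $\Omega\setminus D$, so $\|w^h\|_{L^\infty}\le \max_i |w_i|$ uniformly in $h$. Hence $\|w^h\|_{L^1}$ is uniformly bounded.
\item For the total variation I compare with the feasible point $\bar{w}$ (interpolated on $DG0^{\tau_h}_D$ if necessary). Minimality gives $j^h(w^h)\le j^h(\bar{w})$.
\item The linear term $\int_D g w^h$ is bounded by $\|g\|_{L^1}\max_i|w_i|$.
\item $T^h(\bar{w})$ stays bounded: either $\bar{w}$ is itself piecewise constant on the mesh, or I use the recovery sequence for $\bar{w}$ from \cref{lem:trph_trp}, along which $T^h\to\TV(\bar{w})<\infty$.
\item Therefore $\alpha T^h(w^h)$ is uniformly bounded. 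Since $T^h(w)\ge \tfrac{1}{\sqrt{2}}\TV(w)$, this gives a uniform bound on $\TV(w^h)$.
\end{itemize}
This lower bound $T^h\ge\tfrac{1}{\sqrt 2}\TV$ is exactly why the $\max$-modification of $\TV^h$ was introduced, and it is the key to equi-coercivity. Weak-$^*$ compactness of bounded sets in $\BV(\Omega)$, together with compact embedding into $L^1$, then yields a weak-$^*$ accumulation point $\hat{w}$. Because $L^1$ convergence implies a.e.\ convergence along a subsequence, $\hat{w}$ takes values in $W$ on $D$, equals $\bar{w}$ off $D$, and satisfies $\|\hat{w}-\bar{w}\|_{L^1(D)}\le\Delta$. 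So $\hat w$ is feasible for \eqref{eq:trp} and belongs to $\BVW(\Omega)$.

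\emph{Minimality.} Take any subsequence $w^{h_m}\weakstarto\hat{w}$ and any competitor $v\in\BVW(\Omega)$ with $j(v)<\infty$.
\begin{itemize}
\item The $\liminf$-inequality of \cref{lem:trph_trp} gives $j(\hat w)\le\liminf_m j^{h_m}(w^{h_m})$.
\item The $\limsup$-inequality provides a recovery sequence $v^h$ with $\limsup_h j^h(v^h)\le j(v)$.
\item Minimality of $w^h$ gives $j^{h_m}(w^{h_m})\le j^{h_m}(v^{h_m})$.
\end{itemize}
Chaining these, $j(\hat w)\le j(v)$. Since $j$ agrees with the objective of \eqref{eq:trp} up to the additive constants $-(g,\bar w)-\alpha\TV(\bar w)$, the point $\hat w$ solves $\operatorname{TRP}(\bar w,g,D,\Delta)$.

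\emph{Expected obstacle.} The main delicacy is in the compactness step: checking that $T^h(\bar w)$, or the energy of some feasible comparison point, is bounded uniformly in $h$. If $\bar w$ is not representable on the grid, I will use the recovery sequence for $\bar w$. That sequence satisfies the trust-region constraint because $\|\bar w-\bar w\|_{L^1}=0<\Delta$, which is the easy case in the proof of \cref{lem:trph_trp}. I will also need to check that the loss of strict inequalities under the limit does not break feasibility; this works because the constraint sets are closed under $L^1$ convergence.
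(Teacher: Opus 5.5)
Your proposal is correct and follows the same route as the paper. The paper obtains uniform $\BV$-boundedness from the construction of $T^h$, which builds in $T^h \ge \tfrac{1}{\sqrt{2}}\TV$ (citing Lemma 3.9 in \cite{schiemann2025discretization}), and then applies the standard fundamental theorem of $\Gamma$-convergence (Theorem 1.21 in \cite{braides2002gamma}); your explicit upper energy bound, obtained by comparing with $\bar{w}$ or its recovery sequence, together with the chained $\liminf$/$\limsup$ argument, just writes out the details the paper delegates to these citations.
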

\begin{proof}
In addition to the $\Gamma$-convergence result
from \cref{lem:trph_trp}, we require boundedness
in $\BVW(\Omega)$. This follows from
the construction of $T^h$ by virtue of Lemma 3.9
in \cite{schiemann2025discretization}.
Then the result follows with the ususal argument
to show convergence of minimizers for $\Gamma$-converging
functionals; see, e.g., Theorem 1.21 in
in \cite{braides2002gamma}.
\end{proof}

\end{document}